\newcounter{spec}
{\end{list}}
\renewcommand{\P}{{\mathbf P}}
\newcommand{\sX}{{\mathcal X}}
\newcommand{\N}{{\mathbb N}}
\newcommand{\Z}{{\mathbb Z}}
\newcommand{\Q}{{\mathbb Q}}
\newcommand{\C}{{\mathbb C}}
\newcommand{\R}{{\mathbb R}}
\newcommand{\oi}{\hskip1mm {\buildrel \simeq \over \rightarrow} \hskip1mm}
\newcommand{\Br}{{\operatorname{Br   }}}
\newcommand{\Ker}{{\operatorname{Ker}}}
\newcommand{\Spec}{{\operatorname{Spec \ }}}
\renewcommand{\lim}{\varprojlim}
\def\oi{\hskip1mm {\buildrel \simeq \over \rightarrow} \hskip1mm}
\numberwithin{equation}{section}
\newfont{\gothic}{eufb10}
\renewcommand{\qed}{{\hfill$\square$}}
\newtheorem{theo}{Th\'{e}or\`{e}me}[section]
\newtheorem{prop}[theo]{Proposition}
\newtheorem{lem}[theo]{Lemme}
\newtheorem{cor}[theo]{Corollaire}
\theoremstyle{definition}
\newtheorem{defi}[theo]{D\'efinition}
\theoremstyle{remark}
\newtheorem{rema}[theo]{Remarque}
\newtheorem{ex}[theo]{Exemples}
\newcommand{\bthe}{\begin{theo}}
\newcommand{\ble}{\begin{lem}}
\newcommand{\bpr}{\begin{prop}}
\newcommand{\bco}{\begin{cor}}
\newcommand{\bde}{\begin{defi}}
\newcommand{\ethe}{\end{theo}}
\newcommand{\ele}{\end{lem}}
\newcommand{\epr}{\end{prop}}
\newcommand{\eco}{\end{cor}}
\newcommand{\ede}{\end{defi}}
\newcommand{\et}{{\operatorname{\acute{e}t}}}
\newcommand{\Pic}{\operatorname{Pic}}
\newcommand{\F}{{\mathbb F}}
\newcommand{\Gal}{{\rm Gal}}
\newcommand{\G}{{\mathbb G}}
\newcommand{\X}{{\mathcal X}}
\def\k{{\overline k}}
\DeclareFontFamily{U}{wncy}{}
\DeclareFontShape{U}{wncy}{m}{n}{%
<5>wncyr5%
<6>wncyr6%
<7>wncyr7%
<8>wncyr8%
<9>wncyr9%
<10>wncyr10%
<11>wncyr10%
<12>wncyr6%
<14>wncyr7%
<17>wncyr8%
<20>wncyr10%
<25>wncyr10}{}
\DeclareMathAlphabet{\cyr}{U}{wncy}{m}{n}
\begin{document}

  \title[Non rationalit\'e stable]{Non rationalit\'e stable sur les corps quelconques \\ Notes pour l'\'Ecole ``Birational geometry of hypersurfaces'' \\   Palazzo Feltrinelli, Gargagno del Garda \\  19--23 mars 2018 \\
   Version r\'evis\'ee,  24 avril 2018}

\author{J.-L. Colliot-Th\'el\`ene}
\address{Universit\'e Paris Sud\\Math\'ematiques, B\^atiment 425\\91405 Orsay Cedex\\France}
\email{jlct@math.u-psud.fr}

\date{24 avril 2018}
\maketitle

\section{Introduction}

Le texte ci-dessous a \'et\'e \'ecrit \`a l'occasion de l'\'Ecole ``Birational geometry of hypersurfaces''.
Il s'agit d'un rapport de synth\`ese.
Voici quelques points nouveaux.

Apr\`es  les articles initiaux de C.~Voisin \cite{voisinInv} et  de Colliot-Th\'el\`ene et  Pirutka \cite{CTP16},
les divers articles qui ont \'etabli la non rationalit\'e stable de divers types de vari\'et\'es classiques
ont utilis\'e la sp\'ecialisation de Fulton du groupe de Chow des z\'ero-cycles. 
Je d\'eveloppe dans ce texte une remarque de \cite{CTP16} : 
on peut remplacer la sp\'ecialisation du groupe de Chow des z\'ero-cycles
 par la sp\'ecialisation de la ${\rm R}$-\'equivalence.
On comparera ainsi la proposition \ref{tot} (due \`a Totaro \cite{T}) avec  les propositions \ref{remgen} et  \ref{Rtot}
et le th\'eor\`eme \ref{specRetdiff}.

La proposition \ref{H3Bettitors} (c) est nouvelle.
La proposition \ref{H1nul} am\'eliore un \'enonc\'e publi\'e dans \cite{ctJAG}.
La proposition \ref{fibrereductible} est nouvelle.

  \section{Entre rationalit\'e et unirationalit\'e}

\begin{lem}  Soit $k$ un corps.
Soit $X$ une $k$-vari\'et\'e g\'eom\'etriquement int\`egre
de dimension $d$. Consid\'erons les propri\'et\'es suivantes.

(i) La $k$-vari\'et\'e  $X$ est $k$-rationnelle, i.e. $k$-birationnelle  \`a $\P^d_{k}$.

(ii) La $k$-vari\'et\'e $X$ est stablement $k$-rationnelle, i.e. il existe un entier $n\geq 0$
tel que $X \times_{k} \P^n_{k}$ est $k$-birationnelle  \`a $\P^{n+d}_{k}$.

(iii) La $k$-vari\'et\'e  $X$ est facteur direct d'une vari\'et\'e $k$-rationnelle,
  c'est-\`a-dire qu'il existe   une $k$-vari\'et\'e $Y$ g\'eom\'etriquement  int\`egre  telle
que $X \times_{k} Y$ est $k$-birationnelle \`a un espace projectif.

 (iv)  La $k$-vari\'et\'e $X$ est r\'etractilement $k$-rationnelle, 
 c'est-\`a-dire qu'il existe un ouvert de Zariski non vide $U \subset X$, un ouvert de Zariski $V \subset \P^n_{k}$, et des $k$-morphismes
 $f : U \to V$ et $g: V \to U$ dont le compos\'e $g \circ f$ est l'identit\'e de $U$.

 (v) La $k$-vari\'et\'e $X$ est $k$-unirationnelle,  c'est-\`a-dire qu'il existe $ m\geq n$ et  une 
 $k$-application rationnelle dominante  $\P^m_{k}$ vers $X$.
 
On a :  (i) implique (ii) qui implique (iii), et (iv) implique (v). Si $k$ est infini, $(iii)$ implique (iv).
\end{lem}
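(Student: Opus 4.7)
Les deux premi\`eres implications sont imm\'ediates : pour (i)$\Rightarrow$(ii), on choisit $n=0$ ; pour (ii)$\Rightarrow$(iii), on prend $Y = \P^n_k$. L'implication (iv)$\Rightarrow$(v) est \'egalement formelle : partant d'un diagramme de r\'etraction $f : U \to V \subset \P^n_k$, $g : V \to U$ avec $g \circ f = \operatorname{id}_U$, le morphisme $g$ est surjectif, donc dominant, et induit une application rationnelle dominante $\P^n_k \dashrightarrow X$. Comme $g$ envoie $V$ surjectivement sur $U$, on a $n \geq \dim V \geq \dim U = d$, ce qui fournit (v).

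Le point principal est l'implication (iii)$\Rightarrow$(iv) lorsque $k$ est infini. La strat\'egie consiste \`a fixer une application birationnelle $\varphi : X \times_k Y \dashrightarrow \P^N_k$ et un ouvert dense $\Omega \subset X \times_k Y$ sur lequel $\varphi$ induit un isomorphisme avec un ouvert $\Omega' \subset \P^N_k$, puis \`a sp\'ecialiser la tranche $\Omega$ en un point $k$-rationnel g\'en\'erique de $Y$ afin de fabriquer la r\'etraction. Comme $k$ est infini, l'ensemble $\P^N_k(k)$ est Zariski-dense dans $\P^N_k$ ; il en va donc de m\^eme des $k$-points de $\Omega$, puis de $Y(k)$ par projection sur $Y$.

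Un argument standard de dimension des fibres, appliqu\'e \`a la projection du ferm\'e compl\'ementaire $(X \times_k Y) \setminus \Omega$ vers $Y$ en distinguant ses composantes irr\'eductibles suivant qu'elles dominent $Y$ ou non, fournit un ouvert dense $W \subset Y$ tel que, pour tout $y \in W$, l'ouvert $\Omega_y := \{x \in X \mid (x, y) \in \Omega\}$ soit dense dans $X$. La densit\'e de $Y(k)$ dans $Y$ permet alors de choisir $y_0 \in W(k)$. Posons $U := \Omega_{y_0}$, puis $V := \varphi(\Omega \cap (U \times_k Y)) \subset \P^N_k$, ouvert car image d'un ouvert par l'isomorphisme $\varphi : \Omega \iso \Omega'$. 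On d\'efinit $f : U \to V$ par $f(x) := \varphi(x, y_0)$, et $g : V \to U$ par $g(v) := \pi_X(\varphi^{-1}(v))$, o\`u $\pi_X$ d\'esigne la premi\`ere projection. On v\'erifie imm\'ediatement que $g(f(x)) = \pi_X(\varphi^{-1}(\varphi(x, y_0))) = \pi_X(x, y_0) = x$ pour tout $x \in U$, d'o\`u (iv).

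La seule difficult\'e r\'eside dans la garantie de l'existence du $k$-point $y_0$ dans l'ouvert dense $W$ de $Y$ : c'est pr\'ecis\'ement ce qui motive l'hypoth\`ese $k$ infini, le reste de la construction \'etant purement formel.
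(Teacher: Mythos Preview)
The paper does not supply a proof for this lemma; it is stated as a collection of standard implications, and the discussion that follows it concerns the failure of the reverse implications. Your argument is correct and is the expected one. The chain (i)$\Rightarrow$(ii)$\Rightarrow$(iii) and the implication (iv)$\Rightarrow$(v) are indeed formal, and your treatment of (iii)$\Rightarrow$(iv) for $k$ infinite is the classical specialisation argument: transport the Zariski-density of $\P^N_k(k)$ through the birational isomorphism to obtain density of $Y(k)$ in $Y$, pick $y_0 \in W(k)$ so that the slice $\Omega_{y_0}$ is dense in $X$, and read off the retraction pair $(f,g)$ from the inclusion $U \hookrightarrow \Omega$ and the first projection. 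The verification that $g$ lands in $U$ (because $V = \varphi(\Omega \cap (U \times_k Y))$) and that $g \circ f = \mathrm{id}_U$ is exactly as you wrote.

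One cosmetic remark: in the statement of (v) the paper writes ``$m \geq n$'', which is almost certainly a typo for ``$m \geq d$'' (the later exercise confirms that the intended content is unirationality in the usual sense). You have silently read it this way, which is the right thing to do.
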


\medskip

On sait que (ii) n'implique pas (i), m\^eme sur $k=\C$ (Beauville, CT, Sansuc, Swinnerton-Dyer \cite{BCTSSD}).
Sur un corps $k$ non alg\'ebriquement clos convenable, on sait montrer que (iii)
n'implique pas (ii). On ne sait pas ce qu'il en est sur $k=\C$.
On ne sait pas si une vari\'et\'e r\'etractilement rationnelle est facteur direct d'une vari\'et\'e rationnelle, m\^eme sur le corps des complexes.
Pour $p$ un nombre premier, et $PGL_{p} \subset GL_{N}$ un plongement de groupes,
Saltman a montr\'e que le quotient $GL_{N}/PGL_{p} $ est r\'etractilement rationnel.
On ne sait pas si cette vari\'et\'e est facteur direct d'une vari\'et\'e rationnelle.
Pour $H \subset G$ des groupes r\'eductifs connexes sur $\C$, on ne sait pas si $G/H$
est r\'etractilement rationnel.
 
 \medskip
 
 Exercice : Si $k$ est infini, l'hypoth\`ese de (v) implique la m\^eme hypoth\`ese
 avec $m=n$.

\begin{rema}
 Sur un corps $k$ alg\'ebriquement clos de caract\'eristique quelconque,
une $k$-vari\'et\'e projective et lisse $X$ r\'etractilement rationnelle est clairement rationnellement connexe
par cha\^{\i}nes. En caract\'eristique z\'ero, elle est donc s\'eparablement rationnellement connexe,
i.e. il existe un $k$-morphisme $f : \P^1 \to X$ tel que $f^*T_{X}$ soit un fibr\'e vectoriel ample.
\end{rema}

\section{Invariants birationnels stables}

\subsection{R-\'equivalence}

\begin{defi}  (Manin \cite{manin})
Soient $k$ un corps et $X$ une $k$-vari\'et\'e.
On dit que deux $k$-points $A,B \in  X(k)$ sont  \'el\'ementairement ${\rm R}$-li\'es s'il existe
un ouvert $U \subset \P^1_{k}$ et un $k$-morphisme $h: U \to X$
tel que $A, B$ soient dans $h(U(k))$. On dit que deux points 
 $A,B \in  X(k)$ sont ${\rm R}$-\'equivalents s'il existe une cha\^{i}ne 
 $A=A_{1}, A_{2}, \dots, A_{n}=B$ de $k$-points avec 
 $A_{i}$ et $A_{i+1}$ \'el\'ementairement ${\rm R}$-li\'es.
 On note $X(k)/{\rm R}$ le quotient de $X(k)$ par cette relation d'\'equivalence.
 \end{defi}
 
  Si $X$ est propre sur $k$, dans la d\'efinition ci-dessus, on peut prendre simplement $U=\P^1_{k}$.

Si $f : X \to Y$ est un $k$-morphisme, on a une application induite
$X(k)/{\rm R} \to Y(k)/{\rm R}$.  

Si $X$ est un ouvert d'un espace projectif $\P^n_{k}$, comme par deux $k$-points
il passe une droite $\P^1_{k}$, deux $k$-points quelconques de $X$
sont \'el\'ementairement R-li\'es, et $X(k)/{\rm R}$ a au plus un \'el\'ement.

\begin{defi}
Soient $k$ un corps et $X$ une $k$-vari\'et\'e int\`egre.

(i) On dit que $X$ est ${\rm R}$-triviale si, pour tout corps $F$ contenant $k$,
le quotient $X(F)/{\rm R}$ est d'ordre 1.

(ii)  On dit que $X$ est presque ${\rm R}$-triviale
s'il existe un ouvert de Zariski dense $U \subset X$ tel que,
pour tout corps $F$ contenant $k$, l'image de $U(F)$ dans $X(F)/{\rm R}$
est d'ordre 1.
\end{defi}

\begin{prop}
Soient $k$ un corps et $X$ une $k$-vari\'et\'e int\`egre, de corps des fonctions $F=k(X)$
et de point g\'en\'erique $\eta$.
Si $X$ est presque ${\rm R}$-triviale, alors il existe un $k$-point $m\in X(k)$
tel que sur $X_{F}$, le point g\'en\'erique $\eta \in X(F)$ et le point $m_{F} \in X(F)$
soient \'el\'ementairement li\'es. \qed
\end{prop}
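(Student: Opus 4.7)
\medskip

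\noindent\textbf{Plan de preuve.} Let $U \subset X$ be the dense Zariski open provided by the definition of presque $\mathrm{R}$-triviale. The strategy is to apply the hypothesis twice: once over $k$ itself to obtain the desired $k$-point $m$, and a second time over the function field $F = k(X)$ to link $m_F$ with the generic point $\eta$.

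First, taking $F = k$ in the definition, the image of $U(k)$ in $X(k)/\mathrm{R}$ is required to have order $1$; this forces in particular $U(k) \neq \emptyset$. Fix once and for all an element $m \in U(k) \subset X(k)$; this will be the sought-after $k$-point.

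Next, take $F = k(X)$. Since $U$ is a dense Zariski open of the integral scheme $X$, the generic point $\eta \colon \Spec F \to X$ factors through $U$, so that $\eta \in U(F)$. Likewise $m_F \in U(F)$, coming from $m \in U(k)$. The hypothesis, applied to this $F$, asserts that the image of $U(F)$ in $X(F)/\mathrm{R}$ has order $1$; hence $\eta$ and $m_F$ represent the same class of $X(F)/\mathrm{R}$, i.e.\ they are joined in $X_F$ by a chain of elementary $\mathrm{R}$-links.

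The remaining subtlety, and the main obstacle, is to pass from this chain to a \emph{single} elementary $\mathrm{R}$-link, which is what the conclusion literally demands. In full generality $\mathrm{R}$-equivalence is strictly weaker than the existence of a single elementary link, so a chain of length $n \geq 2$ would normally be collapsed via a deformation/smoothing argument, transforming the nodal union of the $\P^1_F$'s composing the chain into a single $\P^1_F$ and exploiting the genericity of $\eta$. The extreme brevity of the statement, closed immediately with a proof marker, suggests however that the author either invokes such a collapsing step as routine or is using ``\'el\'ementairement li\'es'' here as shorthand for ``$\mathrm{R}$-\'equivalents''; under the latter reading the first two steps already complete the proof.
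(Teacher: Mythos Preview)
Your approach is exactly what the paper intends: the proposition is closed with a $\Box$ and no argument, and the content is precisely the two-step unwinding you give --- apply the definition over $k$ to secure $m \in U(k)$, then over $F = k(X)$ to place both $\eta$ and $m_F$ in $U(F)$, hence in the same $\mathrm{R}$-class on $X_F$.

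Your diagnosis of ``\'el\'ementairement li\'es'' is also correct and is confirmed elsewhere in the paper. When the author later invokes this proposition (in the proof of Proposition~\ref{remgen}), the conclusion is restated as ``les points $\eta$ et $n_F$ sont $\mathrm{R}$-\'equivalents'', and the argument there spreads this out as a finite \emph{chain} of morphisms $f_i : \P^1 \times U \to X$, not a single one. So ``\'el\'ementairement li\'es'' in the statement is a slip for ``$\mathrm{R}$-\'equivalents''; no smoothing step is intended, and your first two paragraphs already constitute the full proof.
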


\begin{rema}
Dans la suite de ce texte on sera int\'eress\'e \`a la notion de presque $R$-trivialit\'e dans une situation 
o\`u $U$ est lisse connexe mais o\`u $X$ n'est pas n\'ecessairement lisse.  On prendra garde qu'en l'absence
de lissit\'e de $X$
la condition de presque ${\rm R}$-trivialit\'e n'est  a priori pas tr\`es forte. 
Soit $Y \subset \P^n_{k}$ une $k$-vari\'et\'e quelconque et $X \subset \P^{n+1}_{k}$
le c\^{o}ne sur $Y$. Alors $X(k)/R=1$ car tout $k$-point de $X$ est \'el\'ementairement
R-li\'e au sommet $O \in X(k)$ du c\^{o}ne. 
Soit $U \subset X$ le compl\'ementaire du sommet du c\^{o}ne. Si par exemple $k=\C$, 
$Y \subset \P^2_{\C}$ est une courbe elliptique $E$,  alors
$U(\C)/R $ est en bijection avec $E(\C)/R=E(\C)$,
mais  l'application $U(\C) \to X(\C)/R$ a pour image un point.
 Les $\C$-vari\'et\'es $U$ et $X$ ne sont pas r\'etractilement rationnelles.
\end{rema}
\begin{defi}
Soient $k$ un corps et $f : X \to Y$ un $k$-morphisme.
On dit que $f$ est ${\rm R}$-trivial si pour tout corps $F$ contenant $k$,
l'application induite $X_{F}(F)/{\rm R} \to Y_{F}(F)/{\rm R}$ est une bijection.
\end{defi}
Un exemple est fourni par l'\'eclatement $X \to Y$ d'une sous-$k$-vari\'et\'e ferm\'ee lisse
dans une $k$-vari\'et\'e lisse $Y$.

\medskip

On a l'\'enonc\'e simple mais efficace suivant.
\begin{prop}\label{Rsimple}
Soit  $k$ un corps infini. 
Soit $X$ une $k$-vari\'et\'e int\`egre.
Si $X$ est r\'etractilement rationnelle, alors il existe un
 ouvert non vide $U \subset X$ tel que,
  pour tout corps $F$ contenant $k$,
 tout couple de points 
  $A, B \in U(F)$ est \'el\'ementairement ${\rm R}$-li\'e dans $U(F)$,
  et a fortiori dans $X(F)$.
  En particulier $X$ est presque ${\rm R}$-triviale.$\Box$
\end{prop}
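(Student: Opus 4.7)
The plan is to feed two $F$-points straight into the retraction data. Write $U \subset X$ open, $V \subset \P^n_k$ open, and $f : U \to V$, $g : V \to U$ for the maps witnessing retract rationality, satisfying $g \circ f = \mathrm{id}_U$. I claim that this same $U$ already satisfies the statement.

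Fix an extension $F/k$ and two points $A,B \in U(F)$; set $P = f(A)$ and $Q = f(B)$, both lying in $V(F) \subset \P^n_F(F)$. I first dispose of the degenerate case $P = Q$: the relation $g \circ f = \mathrm{id}_U$ forces $A = g(P) = g(Q) = B$, so there is nothing to do. Otherwise $P \ne Q$, and the unique line $L \subset \P^n_F$ through $P$ and $Q$ is a copy of $\P^1_F$. The intersection $W := L \cap V_F$ is open in $L$ and contains $\{P,Q\}$, hence is a non-empty open subset of $\P^1_F$. The composite
\[
h : W \hookrightarrow V_F \stackrel{g_F}{\longrightarrow} U_F
\]
is an $F$-morphism with $h(P) = A$ and $h(Q) = B$, which exhibits $A$ and $B$ as elementarily R-linked in $U(F)$, and \emph{a fortiori} in $X(F)$. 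The ``almost R-trivialit\'e'' of $X$ follows at once.

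The only subtlety is the degenerate case $P = Q$, which the retraction identity settles immediately, so there is no real obstacle. The hypothesis that $k$ be infinite does not intervene in the geometric argument above; it is inherited from the way retract rationality is used elsewhere in the text, in particular in the implication (iii) $\Rightarrow$ (iv) of the preceding lemma.
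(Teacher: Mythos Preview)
Your argument is correct and is precisely the intended one: map $A,B$ into $V$ via $f$, join $f(A)$ and $f(B)$ by the line in $\P^n_F$, restrict to $V_F$, and push back to $U_F$ via $g$; the identity $g\circ f=\mathrm{id}_U$ recovers $A$ and $B$ at the endpoints. This is why the paper simply writes $\Box$ after the statement. Your remark that the infiniteness of $k$ plays no role in this particular argument is accurate; the hypothesis is carried over from the surrounding discussion rather than being needed here, though your attribution to the implication (iii)$\Rightarrow$(iv) is slightly beside the point since the proposition already assumes (iv) outright.
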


\begin{theo}  Soit $k$ un corps de caract\'eristique z\'ero. 
Soient $Y$ et $X$ deux $k$-vari\'et\'es projectives et lisses
g\'eom\'etriquement int\`egres.
S'il existe un ouvert $Y' \subset Y$, un $k$-morphisme dominant  
 $Y' \to X$ et une $k$-section rationnelle de $Y' \to X$,
 alors il existe une application surjective $Y(k)/{\rm R} \to X(k)/{\rm R}$.
En particulier, si  $X$ est  r\'etractilement rationnelle, 
par exemple si $X$ est stablement $k$-rationnelle,
alors  $X$ est ${\rm R}$-triviale.
\end{theo}
\begin{proof}
On commence par \'etablir que si $Y \to X $ est l'\'eclat\'e d'une
 sous-$k$-vari\'et\'e lisse $Z$ dans une 
une $k$-vari\'et\'e projective et lisse $X$, l'application induite
$Y(k)/{\rm R}  \to X(k)/{\rm R}$ est une bijection.

Soit $Y ... \to X$ une $k$-application rationnelle dominante poss\'edant une section rationnelle.
D'apr\`es Hironaka, par \'eclatements successifs au-dessus de $Y$  le long de sous-$k$-vari\'et\'es ferm\'ees lisses, 
 on peut obtenir un $k$-morphisme $W \to X$ qui couvre l'application rationnelle
$ Y ... \to X$, et tel que l'application induite $W(k)/{\rm R} \to Y(k)/{\rm R}$ soit une bijection.
Le $k$-morphisme $W \to X$ admet une section $k$-rationnelle. Appliquant
le th\'eor\`eme de Hironaka \`a cette section, par \'eclatements successifs au-dessus de $X$
le long de sous-$k$-vari\'et\'es ferm\'ees lisses, on obtient une $k$-vari\'et\'e $Z$
muni d'une application $k$-birationnelle $f: Z \to X$ et d'un $k$-morphisme $Z \to W$
tel que le compos\'e $Z \to W \to X$ soit $f$.
L'application compos\'ee induite $Z(k)/{\rm R} \to W(k)/{\rm R} \to X(k)/{\rm R}$ est surjective,
donc aussi $W(k)/{\rm R} \to X(k)/{\rm R}$,
et l'application $W(k)/{\rm R} \to Y(k)/{\rm R}$ est une bijection.
\end{proof}

\medskip

Kahn et Sujatha \cite{KS} ont \'etabli  des extensions du th\'eor\`eme ci-dessus 
au-dessus d'un corps de caract\'eristique quelconque.

\begin{rema}
C'est une question ouverte si une $k$-vari\'et\'e  projective, lisse, connexe,  ${\rm R}$-triviale
est r\'etractilement rationnelle, et m\^eme si elle est facteur direct d'une 
$k$-vari\'et\'e $k$-rationnelle.

Le cas particulier suivant est d\'ej\`a tr\`es int\'eressant.
Soit $G$ un $k$-groupe alg\'ebrique (lin\'eaire) r\'eductif connexe.
 L'ensemble $G(k)/{\rm R}$
est alors naturellement muni d'une structure de groupe. 
Si $k$ est alg\'ebriquement clos, $G$ est une vari\'et\'e rationnelle. Sur un corps
$k$ quelconque, un $k$-groupe alg\'ebrique r\'eductif connexe $G$ est $k$-unirationnel.

Pour un tel $k$-groupe $G$, les questions suivantes
sont ouvertes. 
Sous des hypoth\`eses
particuli\`eres sur $k$ ou sur $G$, elles ont fait l'objet de nombreux
travaux \cite{Gillebki}.

(a) Le groupe $G(k)/{\rm R}$ est-il commutatif ?

(b) Si $G$ est ${\rm R}$-trivial, $G$ est-il r\'etractilement rationnel ?

(c) Si $k$ est un corps de type fini sur le corps premier, le groupe $G(k)/{\rm R}$
est-il fini ?
\end{rema}

\begin{rema}
Une  $k$-vari\'et\'e $X$ propre, lisse, connexe,  presque ${\rm R}$-triviale est  g\'eom\'e\-tri\-quement
rationnellement connexe par arcs (au sens de  Koll\'ar, Miyaoka, Mori \cite{kollar}).
Si $k$ est de caract\'eristique z\'ero,
elle est donc g\'eom\'etriquement s\'eparablement rationnellement connexe :
apr\`es extension du corps de base, 
il existe un   morphisme $f: \P^1  \to X$ tel que
$f^* T_{X} $ soit un fibr\'e vectoriel ample.
D\'eterminer ce qu'il en est en caract\'eristique positive.
\end{rema}

\subsection{Groupe de Chow des z\'ero-cycles}

Soit $X$ une $k$-vari\'et\'e alg\'ebrique. 
On note $Z_{0}(X)$ le groupe ab\'elien libre sur les points ferm\'es de $X$. 
On a l'application degr\'e $deg_{k} : Z_{0}(X) \to \Z$ envoyant 
$\sum_{i}n_{i}P_{i}$ sur $\sum_{i} n_{i} [k(P_{i}):k]$.
Pour tout $k$-morphisme $f: Y \to X$ de $k$-vari\'et\'es, on dispose d'une application
induite $f_{*} : Z_{0}(X) \to Z_{0}(Y)$ qui est additive et envoie le point ferm\'e $P \in X$
d'image le point ferm\'e $Q$ de $Y$ sur
 $[k(P):k(Q)]Q$.   Cette application pr\'eserve le degr\'e.

Si $C \to X$ est un $k$-morphisme propre d'une $k$-courbe normale int\`egre $C$
et $g\in k(C)^*$ une fonction rationnelle sur $C$, on leur associe le z\'ero-cycle
$f_{*}(div_{C}(g))$. On d\'efinit $CH_{0}(X)$ comme le quotient de $Z_{0}(X)$ par
le sous-groupe engendr\'e par tous les $f_{*}(div_{C}(g))$ pour $C, g, f$ comme ci-dessus.

Si  la $k$-vari\'et\'e $X$ est propre, le  degr\'e   $deg_{k} : Z_{0}(X) \to \Z$,
induit un homomorphisme $deg_{k} :  CH_{0}(X) \to \Z$,
car le degr\'e du diviseur  des z\'eros d'une fonction rationnelle sur une courbe propre
est z\'ero.

Plus g\'en\'eralement, pour $ h : Y \to X$ un $k$-morphisme propre, l'application 
$f_{*} : Z_{0}(X) \to Z_{0}(Y)$ induit une application $f_{*}: CH_{0}(Y) \to CH_{0}(X)$.

\begin{defi}
Soit  $X$ une $k$-vari\'et\'e propre. On dit que $X$ est (universellement) $CH_{0}$-triviale si
pour tout corps $F$ contenant $k$, le degr\'e $$deg_{F} : CH_{0}(X_{F}) \to \Z$$
est un isomorphisme.
\end{defi}

\begin{prop}(Merkurjev) \cite[Thm. 2.11]{merk} \label{merku}
Soit  $X$ une $k$-vari\'et\'e propre, lisse,  g\'eom\'etriquement int\`egre. 
Les propri\'et\'es suivantes sont \'equivalentes :

(i)  La $k$-vari\'et\'e $X$ est   $CH_{0}$-triviale.

(ii) $X$ poss\`ede  un z\'ero-cycle de degr\'e 1 et,
pour $F=k(X)$ le corps des fonctions de $X$, l'application $deg_{F} : CH_{0}(X_{F}) \to \Z$ est un isomorphisme.

(iii) La classe du point g\'en\'erique de $X$ dans $CH_{0}(X_{k(X)})$  est dans l'image de l'application
image r\'eciproque  $CH_{0}(X) \to CH_{0}(X_{k(X)})$.
\end{prop}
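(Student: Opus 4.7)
Le plan est d'\'etablir les implications (i) $\Rightarrow$ (ii), (ii) $\Rightarrow$ (iii) et (iii) $\Rightarrow$ (i), la derni\`ere \'etant la partie substantielle. L'implication (i) $\Rightarrow$ (ii) s'obtient par sp\'ecialisation \`a $F=k$ (qui fournit un z\'ero-cycle de degr\'e 1) et \`a $F=k(X)$. Pour (ii) $\Rightarrow$ (iii), je consid\'ererais un z\'ero-cycle $z$ sur $X$ de degr\'e 1 et le point g\'en\'erique $\eta$ vu comme point ferm\'e de $X_{k(X)}$ de corps r\'esiduel $k(X)$. Les classes $[\eta]$ et $p^*[z]$ (o\`u $p: X_{k(X)} \to X$ d\'esigne la projection) sont toutes deux de degr\'e 1 dans $CH_0(X_{k(X)})$; l'injectivit\'e du degr\'e, cons\'equence de (ii), force leur \'egalit\'e, ce qui est pr\'ecis\'ement (iii).

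Pour (iii) $\Rightarrow$ (i), la strat\'egie repose sur une \emph{d\'ecomposition de la diagonale}. La suite de localisation en codimension $d=\dim X$ sur $X \times_k X$ identifie $CH_0(X_{k(X)})$ au quotient de $CH^d(X \times_k X)$ par le sous-groupe engendr\'e par les classes \`a support dans les ferm\'es de la forme $Z \times_k X$, $Z \subsetneq X$ ferm\'e; sous cette identification, $[\Delta_X]$ s'envoie sur $[\eta]$ et $[X \times z]$ sur $p^*[z]$. L'hypoth\`ese (iii) fournit donc un ferm\'e $Z \subsetneq X$ et un cycle $Y$ \`a support dans $Z \times X$ v\'erifiant
$$[\Delta_X] = [X \times z] + [Y] \quad \text{dans } CH^d(X \times_k X).$$

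Il resterait \`a faire agir cette \'egalit\'e comme correspondance sur $CH_0(X_F)$ pour $F/k$ quelconque. Pour un z\'ero-cycle $\beta$ sur $X_F$, on calculerait
$$\beta = (p_2)_*\bigl(p_1^*(\beta) \cdot [\Delta_{X_F}]\bigr) = (p_2)_*\bigl(p_1^*(\beta) \cdot [X_F \times z_F]\bigr) + (p_2)_*\bigl(p_1^*(\beta) \cdot [Y_F]\bigr),$$
le premier terme valant $deg(\beta) \cdot z_F$ et le second s'annulant d\`es que le support de $\beta$ est disjoint de $Z_F$. Par le lemme du d\'eplacement pour les z\'ero-cycles sur la vari\'et\'e lisse $X_F$, on se ram\`ene \`a ce cas, d'o\`u $\beta = deg(\beta) \cdot z_F$ dans $CH_0(X_F)$; comme $z_F$ est de degr\'e 1, le degr\'e $CH_0(X_F) \to \Z$ est un isomorphisme.

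Le point le plus d\'elicat est l'articulation entre la d\'ecomposition de la diagonale, obtenue via la suite de localisation et le passage \`a la limite inductive sur les ouverts de $X$, et son utilisation comme correspondance: la lissit\'e de $X$ est essentielle pour appliquer le lemme du d\'eplacement aux z\'ero-cycles sur $X_F$, et la propret\'e assure que les pushforwards $(p_2)_*$ et le morphisme degr\'e sont bien d\'efinis.
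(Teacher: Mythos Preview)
Le papier ne d\'emontre pas cette proposition : il se contente de renvoyer \`a \cite[Thm.~2.11]{merk}. Votre preuve est correcte et suit exactement l'argument classique de d\'ecomposition de la diagonale (Bloch--Srinivas, repris par Merkurjev) : l'identification $CH_0(X_{k(X)}) = \varinjlim_U CH_d(U\times_k X)$ via la suite de localisation pour la premi\`ere projection, le rel\`evement de l'\'egalit\'e $[\eta]=p^*[z]$ en une d\'ecomposition $[\Delta_X]=[X\times z]+[Y]$ avec $Y$ port\'e par $Z\times X$, puis l'action par correspondances sur $CH_0(X_F)$ combin\'ee au lemme de d\'eplacement (le papier invoque \`a cet endroit le lemme~\ref{gll} de Gabber--Liu--Lorenzini, mais le lemme de d\'eplacement usuel pour les z\'ero-cycles sur une vari\'et\'e lisse suffit). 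Deux d\'etails m\'eriteraient d'\^etre explicit\'es : d'une part, le fait que l'\'egalit\'e $[\eta]=p^*[z]$ force $\deg_k(z)=1$ (puisque $\deg_{k(X)}[\eta]=1$), ce qui est n\'ecessaire pour conclure \`a la surjectivit\'e du degr\'e ; d'autre part, la compatibilit\'e de la d\'ecomposition au changement de base $k\to F$, qui repose sur la platitude de $\Spec F\to\Spec k$.
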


 \begin{defi}
Soit  $f : Y \to X$ un $k$-morphisme propre de $k$-vari\'et\'es. 
On dit que $f$ est un $CH_{0}$-isomorphisme (universel) si 
pour tout corps $F$ contenant~$k$, 
l'application induite  $$f_{F,*}: CH_{0}(Y_{F}) \to CH_{0}(X_{F})$$
est un isomorphisme.
\end{defi}

\begin{lem}\label{gll}  \cite[Cor. 6.7]{GLL}
Soit $X$ une vari\'et\'e quasi-projective  r\'eguli\`ere   connexe  sur un corps    $k$.
\'Etant donn\'e un z\'ero-cycle $z$ sur  $X$ et un ouvert de Zariski  non vide $U \subset X$,
il existe un z\'ero-cycle $z'$ sur $X$ dont le support est dans $U$
 et qui est rationnellement
\'equivalent \`a $z$ sur $X$.
\end{lem}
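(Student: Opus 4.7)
Mon approche est la suivante. Par $\Z$-lin\'earit\'e de l'\'equivalence rationnelle, je me ram\`ene au cas o\`u $z = [P]$ est un unique point ferm\'e $P$ de $X$, avec $P \notin U$ (sinon $z' = z$ convient). L'id\'ee centrale est de produire une courbe int\`egre ferm\'ee $C \subset X$ contenant $P$, \emph{r\'eguli\`ere en $P$}, et rencontrant $U$; je d\'eplace alors $[P]$ le long de la normalis\'ee de $C$ au moyen d'une fonction rationnelle judicieusement choisie.

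Admettons la construction d'une telle courbe $C$. Soit $\pi : W \to C \hookrightarrow X$ sa normalisation : il s'agit d'un morphisme propre d'une courbe r\'eguli\`ere $W$ vers $X$. La r\'egularit\'e de $C$ en $P$ entra\^{\i}ne que $\mathcal{O}_{C,P}$ est d\'ej\`a un anneau de valuation discr\`ete, donc $\pi$ est un isomorphisme local en l'unique point $\tilde P \in W$ au-dessus de $P$, et $k(\tilde P) = k(P)$. L'ouvert $V := \pi^{-1}(U)$ est non vide par construction, et son compl\'ementaire $F := W \setminus V$ est un ensemble fini de points ferm\'es contenant $\tilde P$. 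L'anneau semi-local $\mathcal{O}_{W,F}$ \'etant un anneau de Dedekind semi-local, donc principal, par approximation je trouve $g \in k(W)^*$ v\'erifiant $v_{\tilde P}(g) = 1$ et $v_Q(g) = 0$ pour tout $Q \in F \setminus \{\tilde P\}$. Le diviseur $div_W(g)$ s'\'ecrit alors $[\tilde P] + D$, avec $D$ un z\'ero-cycle de support inclus dans $V$. En poussant en avant par $\pi$, on obtient
$$\pi_*(div_W(g)) = [P] + \pi_*(D),$$
cycle qui repr\'esente $0$ dans $CH_0(X)$ par d\'efinition de l'\'equivalence rationnelle. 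Ainsi $[P]$ est rationnellement \'equivalent sur $X$ au z\'ero-cycle $z' := -\pi_*(D)$, dont le support est inclus dans $U$.

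L'obstacle principal r\'eside dans la construction de la courbe $C$ \`a la fois r\'eguli\`ere en $P$ et rencontrant $U$, sur un corps $k$ arbitraire. La r\'egularit\'e de $X$ en $P$ fournit un syst\`eme r\'egulier de param\`etres $(f_1, \ldots, f_d)$ dans $\mathcal{O}_{X,P}$, et le sous-sch\'ema $V(f_1, \ldots, f_{d-1})$ d'un voisinage de $P$ est, par Koszul, une courbe r\'eguli\`ere en $P$. Sur un corps infini, un argument de Bertini classique sur un plongement projectif de $X$, avec un syst\`eme lin\'eaire d'hyperplans astreints \`a passer par $P$, permet en outre d'assurer que la composante irr\'eductible $C$ de cette courbe passant par $P$ (prise dans sa fermeture dans $X$) n'est pas contenue dans le ferm\'e $X \setminus U$. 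Sur un corps quelconque, notamment fini, ce point requiert une version fine du th\'eor\`eme de Bertini pour les sch\'emas quasi-projectifs r\'eguliers, \'etablie dans \cite[\S 6]{GLL} par des m\'ethodes \`a la Gabber-Poonen; c'est l\`a que se situe le v\'eritable contenu technique du lemme.
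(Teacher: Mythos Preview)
Le texte ne d\'emontre pas ce lemme : il se contente de renvoyer \`a \cite[Cor.~6.7]{GLL}. Il n'y a donc pas de preuve \`a comparer au sens strict. Ton esquisse est correcte et suit fid\`element la strat\'egie de Gabber--Liu--Lorenzini : r\'eduction \`a un point ferm\'e $P$, construction d'une courbe $C\subset X$ r\'eguli\`ere en $P$ et rencontrant $U$, puis d\'eplacement de $[P]$ via une fonction rationnelle sur la normalis\'ee ayant valuation $1$ en l'unique point au-dessus de $P$ et $0$ aux autres points hors de $U$. Tu identifies correctement que tout le contenu non trivial est dans l'existence de cette courbe sur un corps fini, qui repose sur les th\'eor\`emes de Bertini de \cite[\S 6]{GLL}.

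Une petite pr\'ecision : dans ta discussion de la construction de $C$, tu \'evoques un syst\`eme r\'egulier de param\`etres local puis un argument de Bertini projectif ; ce sont deux pistes distinctes. La preuve effective de \cite{GLL} proc\`ede plut\^ot par la seconde voie (sections d'un fibr\'e tr\`es ample convenable, avec contr\^ole simultan\'e de la r\'egularit\'e en $P$ et de la non-inclusion dans $X\setminus U$), et c'est pr\'ecis\'ement l'extension de Bertini aux corps finis qui constitue leur apport. Ton dernier paragraphe le dit d'ailleurs explicitement ; la phrase sur le syst\`eme r\'egulier de param\`etres est donc plut\^ot une motivation heuristique qu'une \'etape de la d\'emonstration.
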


\begin{lem}\label{RtriviCH_{0}triv}
 Soient $k$ un corps et $X$ une $k$-vari\'et\'e projective, lisse, g\'eom\'etriquement int\`egre.
Si $X$ est  presque ${\rm R}$-triviale, alors $X$ est $CH_{0}$-triviale. $\Box$
\end{lem}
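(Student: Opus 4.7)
Mon plan est de combiner la proposition rappel\'ee juste avant (qui affirme que, sous l'hypoth\`ese de presque $R$-trivialit\'e, il existe un $k$-point $m \in X(k)$ \'el\'ementairement $R$-li\'e au point g\'en\'erique $\eta$ sur $X_F$, o\`u $F=k(X)$) avec le crit\`ere (iii) de Merkurjev (proposition \ref{merku}). Il suffit en effet de v\'erifier que la classe de $\eta$ dans $CH_0(X_F)$ appartient \`a l'image de l'application image r\'eciproque $CH_0(X) \to CH_0(X_F)$ pour en d\'eduire que $X$ est (universellement) $CH_0$-triviale.

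Premi\`erement, j'invoquerai la proposition susmentionn\'ee pour produire un $k$-point $m \in X(k)$ \'el\'ementairement $R$-li\'e \`a $\eta$ sur $X_F$. Puisque $X$ est projective, donc propre sur $k$, l'ouvert de $\P^1_F$ intervenant dans la d\'efinition de la $R$-liaison \'el\'ementaire se prolonge en un $F$-morphisme $h : \P^1_F \to X_F$ (remarque faite juste apr\`es la d\'efinition de la $R$-\'equivalence). On dispose alors de deux $F$-points $a, b \in \P^1_F(F)$ tels que $h(a) = \eta$ et $h(b) = m_F$.

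Ensuite, j'utiliserai le fait \'el\'ementaire que deux $F$-points quelconques de $\P^1_F$ sont rationnellement \'equivalents, puisque leur diff\'erence est le diviseur d'une fonction rationnelle sur $\P^1_F$. On a donc $[a] = [b]$ dans $CH_0(\P^1_F)$. Prenant l'image directe par le morphisme propre $h$, on en d\'eduit l'\'egalit\'e $[\eta] = [m_F]$ dans $CH_0(X_F)$. Or $[m_F]$ est l'image r\'eciproque de la classe $[m] \in CH_0(X)$ par le changement de base $X_F \to X$, ce qui fournit exactement la condition (iii) de la proposition \ref{merku}, et donc la $CH_0$-trivialit\'e universelle de $X$.

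Je ne vois pas d'obstacle s\'erieux : l'argument repose enti\`erement sur la proposition pr\'ec\'edente (qui convertit la presque $R$-trivialit\'e en un lien \'el\'ementaire explicite entre $\eta$ et un $k$-point sur $X_{k(X)}$) et sur le crit\`ere de Merkurjev. La seule subtilit\'e \`a surveiller est d'utiliser la propret\'e de $X$ pour \'etendre le morphisme d\'efini sur un ouvert de $\P^1_F$ \`a $\P^1_F$ tout entier, ce qui est standard.
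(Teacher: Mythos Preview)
Your argument is correct and takes a genuinely different route from the paper's. The paper's proof invokes Lemma~\ref{gll} (the moving lemma of Gabber--Liu--Lorenzini): for an arbitrary field $F$ containing $k$, every zero-cycle on $X_F$ is moved into $U_F$; one then applies the presque $R$-trivialit\'e over the residue fields of the closed points appearing, showing that each closed point $P$ of $U_F$ satisfies $[P]=\deg(P)\,[m_F]$ in $CH_0(X_F)$, whence $A_0(X_F)=0$ directly for every $F$.

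You instead work over the single field $F=k(X)$ and invoke Merkurjev's criterion (Proposition~\ref{merku}(iii)). This is conceptually cleaner: no moving lemma, no passage to residue fields of non-rational closed points. The price is that the universal statement over all $F$ is now packaged inside Proposition~\ref{merku}, which itself rests on a decomposition-of-the-diagonal argument. One small remark: the proposition you quote asserts that $\eta$ and $m_F$ are \emph{\'el\'ementairement} li\'es, but even if one only had $R$-\'equivalence via a chain, each link would already give rational equivalence of its endpoints as zero-cycles, so $[\eta]=[m_F]$ in $CH_0(X_F)$ would follow all the same; your use of propret\'e to extend to $\P^1_F$ is correct but not strictly needed for the conclusion.
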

\begin{proof}
Ceci r\'esulte du lemme \ref{gll} (voir la d\'emonstration de \cite[Lemme 1.5]{CTP16}).
\end{proof}

\begin{prop}  \cite[Lemme 1.5]{CTP16}
Soit $k$ un corps.
Soit $X$ une $k$-vari\'et\'e projective et lisse g\'eom\'etriquement int\`egre.
Si $X$ est r\'etractilement rationnelle, alors  $X$ est  $CH_{0}$-triviale.
\end{prop}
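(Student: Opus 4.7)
Le plan est de vérifier le critère de Merkurjev (proposition \ref{merku}, partie (ii)) : il faut exhiber sur $X$ un zéro-cycle de degré un et montrer que l'application degré $\deg_{k(X)} : CH_{0}(X_{k(X)}) \to \Z$ est un isomorphisme.

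Pour le zéro-cycle de degré un, on utilisera la rétractilité, qui fournit un ouvert non vide $V \subset \P^n_{k}$ et un $k$-morphisme $g : V \to U \subset X$ (dans le sens ``section'' de la rétraction). L'espace $\P^n_{k}$ possédant un $k$-point, le lemme \ref{gll}, appliqué à la $k$-variété régulière $\P^n_{k}$ et à l'ouvert $V$, produit un zéro-cycle $z$ à support dans $V$ rationnellement équivalent sur $\P^n_{k}$ à un tel $k$-point ; il est donc de degré un. L'image directe ensembliste $g_{*}(z) \in Z_{0}(X)$, définie point fermé par point fermé par $g_{*}([Q]) = [k(Q):k(g(Q))]\cdot [g(Q)]$, préserve le degré et fournit le zéro-cycle cherché sur $X$ (ici $g$ n'a pas besoin d'être propre : on ne fait que transporter un cycle, pas une équivalence rationnelle).

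Pour l'isomorphisme $\deg_{k(X)} : CH_{0}(X_{k(X)}) \simeq \Z$, on passera à la fibre générique. Le cas $\dim X = 0$ est immédiat puisqu'alors $X = \Spec(k)$. Sinon, le corps $F := k(X)$ est automatiquement infini. La rétractilité étant stable par extension des scalaires, $X_{F}$ est rétractilement rationnelle sur le corps infini $F$, donc presque $\mathrm{R}$-triviale d'après la proposition \ref{Rsimple}, puis universellement $CH_{0}$-triviale comme $F$-variété d'après le lemme \ref{RtriviCH_{0}triv}. En spécialisant au corps $F$ lui-même, on obtient $\deg_{F} : CH_{0}(X_{F}) \simeq \Z$.

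Combinés, ces deux points donnent le critère de Merkurjev (ii), donc la $CH_{0}$-trivialité de $X$. Le seul obstacle potentiel était le cas d'un corps de base fini, où la proposition \ref{Rsimple} ne s'applique pas directement à $X$ ; cet obstacle se contourne par l'observation élémentaire que $k(X)$ est toujours infini dès que $\dim X \geq 1$, ce qui rend la chaîne \ref{Rsimple} $\Rightarrow$ \ref{RtriviCH_{0}triv} disponible sur $X_{k(X)}$.
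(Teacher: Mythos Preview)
Ta démonstration est correcte. Pour $k$ infini elle emprunte exactement le même chemin que le texte (proposition \ref{Rsimple} puis lemme \ref{RtriviCH_{0}triv}), simplement enveloppé dans le critère de Merkurjev. La différence réelle est dans le traitement du cas $k$ fini : le texte renvoie à un \emph{argument de normes} (on monte à des extensions finies assez grandes de $k$, sur lesquelles le corps devient infini, et l'on redescend par transfert), tandis que tu contournes entièrement ce passage en invoquant la proposition \ref{merku}~(ii), qui ramène tout au corps des fonctions $k(X)$ --- automatiquement infini dès que $\dim X\geq 1$ --- où la chaîne \ref{Rsimple} $\Rightarrow$ \ref{RtriviCH_{0}triv} s'applique directement à $X_{k(X)}$. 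Ton argument pour produire le zéro-cycle de degré~$1$ via le lemme \ref{gll} sur $\P^n_k$ et poussé en avant par $g$ est propre et n'utilise pas la propreté de $g$, ce que tu as bien noté. Les deux voies aboutissent ; la tienne a l'avantage d'être uniforme en la cardinalité de $k$ et d'éviter le transfert par normes, au prix d'un appel au résultat de Merkurjev qui, lui, n'est pas totalement élémentaire.
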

\begin{proof}   Pour $k$ un corps infini,
ceci r\'esulte imm\'ediatement de la  proposition \ref{Rsimple} et du lemme \ref{RtriviCH_{0}triv}.
Le cas d'un corps fini s'\'etablit par un argument de normes.
\end{proof}

\begin{rema}
On prendra garde qu'il existe des surfaces connexes projectives et lisses  sur $\C$
qui sont $CH_{0}$-triviales mais sont de type g\'en\'eral, et donc ne sont pas
rationnellement connexes, et donc pas ${\rm R}$-triviales
 (Voisin ; \cite[Prop. 1.9]{ACTP}). De telles surfaces satisfont $H^0(X, \Omega^{i})=0$
 pour $i=1,2$
 (Prop. \ref{tot} ci-dessous) mais ne satisfont pas $H^0(X, (\Omega^{2})^{\otimes 2})=0$.
 \end{rema}
 
\subsection{Action du groupe de Galois sur le groupe de Picard}\label{galpic}

Soit $X$ une vari\'et\'e projective lisse, connexe, g\'eom\'etriquement rationnellement connexe
sur un corps $k$. Si $k$ est alg\'ebriquement clos de car. z\'ero, tout rev\^etement fini galoisien
\'etale connexe est trivial (Koll\'ar, Miyaoka, Mori).
L groupe $Pic(X)=H^1_{Zar}(X,\G_{m})=H^1_{\et}(X,\G_{m})$ est un groupe ab\'elien
libre de type fini.  Il n'y a pas l\`a d'invariant qui d\'etecterait la non rationalit\'e.
Si $k$ n'est pas alg\'ebriquement  clos, la situation change.

Les invariants suivants ont tout d'abord  \'et\'e \'etudi\'es  par Shafarevich, Manin \cite{manin}, Iskovskikh,  Voskresenski\u{\i}.

\begin{theo}\label{picpermut}
Soient $k$ un corps, $k^s$ une cl\^oture s\'eparable, et $g=\Gal(k^s/k)$. On note $X^s=X\times_{k}k^s$.
Soient $X$ et $Y$ deux $k$-vari\'et\'es propres, lisses, g\'eom\'etriquement int\`egres.

(a) Si $X$ est $k$-birationnelle \`a $Y$, alors il existe des $g$-modules de permutation de type
fini $P_{1}$ et $P_{2}$ et un isomorphisme de modules galoisiens
$$ \Pic(X^s) \oplus P_{1} \simeq \Pic(Y^s) \oplus P_{2},$$
et l'on a $H^1(k, \Pic(X^s)) \simeq H^1(k, \Pic(Y^s) )$.

(b)  Supposons ${\rm car}(k)=0$. Si $X$ est $CH_{0}$-triviale, 
alors
le module galoisien $ \Pic(X^s) $ est un facteur direct d'un 
$g$-module de permutation de type
fini,  et,  pour tout corps $F$ contenant $k$, on a $H^1(F, \Pic(X\times_{F}F^s))=0$.

(c) Supposons ${\rm car}(k)=0$. Si $X$ est r\'etractilement rationnelle, alors
le module galoisien $ \Pic(X^s) $ est un facteur direct d'un 
$g$-module de permutation de type
fini, et,  pour tout corps $F$ contenant $k$, on a $H^1(F, \Pic(X\times_{F}F^s))=0$.
\end{theo}
\begin{proof}
Pour l'\'el\'egante d\'emonstration de (a) due \`a L. Moret-Bailly, voir \cite[Prop. 2A1, p. 461]{CTSa87a}.
Pour  (b), voir CT, Appendice \`a un article de S. Gille (J. Algebra 440 (2015) 443--463). On en d\'eduit alors (c).
\end{proof}

\begin{rema}
Pour toute $k$-vari\'et\'e projective, lisse, g\'eom\'etriquement connexe $X$
avec un $k$-point, notant $X^s=X\times_{k}k^s$,
on a une suite exacte
$$ 0  \to \Br(k) \to \Ker[\Br(X) \to \Br(X^s)] \to H^1(k,\Pic(X^s)) \to 0.$$

On voit donc que l'invariant ``module galoisien $\Pic(X^s) $ \`a addition pr\`es de module de permutation''
raffine le sous-groupe ``alg\'ebrique'' $\Ker[\Br(X) \to \Br(X^s)] $ du groupe de Brauer de~$X$.
\end{rema}

Voici un exemple d'application.

\begin{prop}\label{diagcub}
Soit $k$ un corps de caract\'eristique diff\'erente de 3. Soient $a,b,c,d \in k^*$.
Si aucun des quotients $ab/cd$ n'est un cube dans $k^*$, alors
la $k$-surface cubique $X \subset \P^3_{k}$ d'\'equation
$$ax^3+by^3+cz^3+dt^3=0$$
n'est pas stablement $k$-rationnelle, et si de plus $k$ est de caract\'eristique \'z\'ero,
elle n'est pas r\'etractilement rationnelle.
\end{prop}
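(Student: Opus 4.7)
L'id\'ee est d'appliquer le th\'eor\`eme \ref{picpermut}(a) (et le (c) pour la rationalit\'e r\'etractile en caract\'eristique z\'ero) pour ramener les deux assertions \`a la non-nullit\'e de $H^1(k,\Pic(X^s))$. En effet, si $X$ \'etait stablement $k$-rationnelle, il existerait $n$ tel que $X\times_k\P^n_k$ soit $k$-birationnelle \`a $\P^{n+2}_k$; comme $\Pic((X\times\P^n)^s)=\Pic(X^s)\oplus\Z$ avec action galoisienne triviale sur le second facteur, le (a) donnerait
\[
H^1(k,\Pic(X^s))\simeq H^1(k,\Pic((X\times\P^n)^s))\simeq H^1(k,\Pic(\P^{n+2}_{k^s}))=0.
\]
En caract\'eristique z\'ero, le (c) donne la m\^eme conclusion pour la rationalit\'e r\'etractile.

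Il suffit donc d'exhiber, sous l'hypoth\`ese, une classe non nulle dans $H^1(k,\Pic(X^s))$. Sur $k^s$, la surface $X^s$ est une surface cubique lisse, donc une surface de del Pezzo de degr\'e $3$; son groupe de Picard est un $\Z$-module libre de rang $7$, engendr\'e par les classes des $27$ droites. Pour la cubique diagonale, ces droites sont explicites: \`a chacune des trois partitions de $\{x,y,z,t\}$ en deux paires non ordonn\'ees on associe un paquet de $9$ droites
\[
x_{i_2}=\omega^p\sqrt[3]{-a_{i_1}/a_{i_2}}\,x_{i_1},\qquad x_{j_2}=\omega^q\sqrt[3]{-a_{j_1}/a_{j_2}}\,x_{j_1},
\]
avec $p,q\in\{0,1,2\}$ et $\omega$ racine primitive cubique de l'unit\'e. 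L'action de $\Gal(k^s/k)$ sur ces $27$ droites passe par son action sur $\omega$ et sur les racines cubiques apparaissant dans ces \'equations.

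L'\'etape centrale est le calcul de $H^1(k,\Pic(X^s))$. L'hypoth\`ese qu'aucun des trois quotients $ab/cd$, $ac/bd$, $ad/bc$ n'est un cube dans $k^*$ garantit que, pour chacune des trois partitions, la racine cubique produit du type $\sqrt[3]{ac/bd}$ n'appartient pas \`a $k^*$, et donc qu'un sous-groupe cyclique d'ordre $3$ de $\Gal(k^s/k)$ agit non trivialement sur les $9$ droites du paquet correspondant. En se restreignant au sous-$g$-r\'eseau de $\Pic(X^s)$ engendr\'e par les classes de ces $9$ droites et en utilisant le lemme de Shapiro, on construit un $1$-cocycle explicite dont la classe est non nulle dans $H^1(k,\Pic(X^s))$. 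Ce calcul, de nature combinatoire sur le r\'eseau $\Pic(X^s)$, est classique pour les cubiques diagonales et figure en substance dans les travaux de Colliot-Th\'el\`ene, Kanevsky et Sansuc sur le groupe de Brauer de ces surfaces.

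La principale difficult\'e est pr\'ecis\'ement ce calcul explicite de cohomologie galoisienne sur un $\Z$-r\'eseau de rang $7$, avec une action combinatoire non triviale sur les $27$ droites. Apr\`es r\'eduction \`a un sous-r\'eseau associ\'e \`a une partition privil\'egi\'ee et \`a un sous-groupe cyclique d'ordre $3$ agissant, il s'agit d'un calcul de cohomologie cyclique sur un r\'eseau de petit rang, pour lequel l'hypoth\`ese de non-cubicit\'e simultan\'ee des trois quotients $ab/cd$, $ac/bd$, $ad/bc$ est exactement celle qui assure que le $1$-cocycle construit n'est pas un cobord.
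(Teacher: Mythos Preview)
Your approach is the same as the paper's: the paper does not give a proof but presents the proposition as an application of Theorem~\ref{picpermut}, citing Manin's book and \cite{CTSa87a} for the case $a=b=c=1$, and the article of Colliot-Th\'el\`ene--Kanevsky--Sansuc for the general case. Your reduction via parts (a) and (c) of Theorem~\ref{picpermut} to the non-vanishing of $H^1(k,\Pic(X^s))$, followed by the description of the Galois action on the $27$ lines, is exactly the intended strategy, and you cite the same references for the core computation.

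One caution on the sketch of the $H^1$ computation itself. Restricting to a Galois-stable sublattice $M'\subset\Pic(X^s)$ generated by the nine lines of a packet and producing a nonzero class in $H^1(k,M')$ does not automatically yield a nonzero class in $H^1(k,\Pic(X^s))$, since the map $H^1(k,M')\to H^1(k,\Pic(X^s))$ need not be injective; and the role you assign to Shapiro's lemma is unclear. The computations in the references proceed rather via the surjection from the permutation module on the $27$ lines onto $\Pic(X^s)$ (or equivalently via the Brauer group description). Since you explicitly defer the calculation to those references, this is not a gap in your argument, only an imprecision in the heuristic you offer for how the computation goes.
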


Le cas $a=b=c=1$ est trait\'e dans le livre de Manin \cite{manin} et dans \cite{CTSa87a}.
 Le cas g\'en\'eral
est fait dans un article de CT-Kanevsky-Sansuc.

On a une r\'eciproque : si $ab/cd$ est un cube, et $X$ poss\`ede un $k$-point,
alors $X$ est $k$-birationnelle \`a $\P^2_{k}$.

Comme on verra plus bas, l'invariant discut\'e ici est tr\`es int\'eressant pour certaines classes
de vari\'et\'es g\'eom\'etriquement rationnelles, par exemple les surfaces. Mais si $X$ est une hypersurface lisse de degr\'e $d \leq n$
dans $\P^n_{k}$ avec $n \geq 4$, alors $\Z= \Pic(\P^n_{k^s}) \oi \Pic(X^s)$ et ceci ne donne aucune information
sur l'\'eventuelle non $k$-rationalit\'e de $X$.

\subsection{Cohomologie non ramifi\'ee}

R\'ef\'erences : \cite{ctoj},  \cite{ctbarbara}, \cite{P}

Soit $A$ un anneau de valuation discr\`ete, $K$ son corps
des fractions, $\kappa_{A}$ son corps r\'esiduel. Soit $n>1$ un entier inversible dans $\kappa_{A}$.
Pour tous entiers $j \in \Z$ et $i\geq 1$, on d\'efinit d'une application r\'esidu
entre groupes de cohomologie galoisienne
$$ \partial_{A} : H^{i}(K,\mu_{n}^{\otimes j} ) 
\to H^{i-1}  (\kappa_{A},
\mu_{n}^{\otimes j-1} ).
$$
Pour $k$ un corps, $X$ une $k$-vari\'et\'e    lisse connexe de corps des fonctions $k(X)$ et $n>0$
entier premier \`a la caract\'eristique de $k$,
on d\'efinit
$$H^i_{nr}(X/k, \mu_{n}^{\otimes j}) = \cap_{x \in X^{(1)}}
[\Ker \partial_{x } : H^i(k(X), \mu_{n}^{\otimes j})  \to H^{i-1}(k(x),\mu_{n}^{\otimes j-1})].$$
Ici $x$ parcourt les points de codimension 1 de $X$ et $k(x)$ est le corps r\'esiduel en $x$.

Soit ${\mathcal H}^{i}_{X}(\mu_{n}^{\otimes j})$ le faisceau Zariski sur $X$ associ\'e au
pr\'efaisceau $U \mapsto   H^{i}(U, \mu_{n}^{\otimes j})$.

\bigskip

La conjecture de Gersten pour
la cohomologie \'etale  (th\'eor\`eme de Bloch-Ogus--Gabber,  voir \cite{CTKH})   implique :

$\bullet$ Le faisceau  Zariski ${\mathcal H}^{i}_{X}(\mu_{n}^{\otimes j})$ est un sous-faisceau du faisceau 
constant d\'efini par $H^{i}(k(X),\mu_{n}^{\otimes j})$. C'est   un  foncteur contravariant sur la cat\'egorie
des $k$-vari\'et\'es propres, lisses, connexes.

$\bullet$ On a $H^0(X,{\mathcal H}^{i}_{X}(\mu_{n}^{\otimes j})) = H^i_{nr}(X/k, \mu_{n}^{\otimes j}).$

$\bullet$ Si $X$ est propre, lisse, connexe, le groupe  $H^i_{nr}(X/k, \mu_{n}^{\otimes j})$ co\"{i}ncide avec
$$H^0(X,{\mathcal H}^{i}_{X}(\mu_{n}^{\otimes j})) \subset H^i_{nr}(X/k, \mu_{n}^{\otimes j}).$$

$\bullet$  Si $X$ est propre, lisse, connexe, ce groupe     co\"{i}ncide  avec
$$ \cap_{A}  [\Ker \partial_{A} : H^i(k(X), \mu_{n}^{\otimes j}) \to  H^{i-1}(\kappa_{A}),\mu_{n}^{\otimes j-1})],$$
o\`u $A$ parcourt tous les anneaux de valuation discr\`ete contenant $k$ et de corps des fractions
$k(X)$.

$\bullet$  Pour tout entier $m\geq 1$, on a
$$H^{i}(k,\mu_{n}^{\otimes j}) \oi H^{i}_{nr}(k(\P^m_{k})/k,\mu_{n}^{\otimes j}).$$

\bigskip

On a les propri\'et\'es suivantes :

$$H^1_{nr}(k(X)/k,\Z/n) = H^1_{\et}(X,\Z/n).$$

$$H^2_{nr}(k(X)/k,\mu_{n}) = \Br(X] [n],$$
o\`u $\Br(X)[n]$ est le sous-groupe de $n$-torsion du groupe de Brauer
$\Br(X)=H^2_{\et}(X,\G_{m})$ de la $k$-vari\'et\'e $X$.

 Le groupe $\Br(X)$ est un invariant $k$-birationnel stable des $k$-vari\'et\'es propres 
et lisses (Grothendieck, Hoobler, Gabber, \v{C}esnavi\v{c}ius).

Les groupes $H^i_{nr}(X/k, \mu_{n}^{\otimes j}) $ sont fonctoriels contravariants
pour les $k$-morphismes  quelconques de $k$-vari\'et\'es propres, lisses, connexes.
Ceci r\'esulte de la formule
$$H^i_{nr}(X/k, \mu_{n}^{\otimes j})= H^0(X,{\mathcal H}^{i}_{X}(\mu_{n}^{\otimes j})).$$

En particulier pour toute $k$-vari\'et\'e $X$ propre, lisse, g\'eom\'etriquement connexe,
pour tout corps $F$ contenant $k$
 on dispose d'accouplements
$$X(F) \times H^i_{nr}(X_{F}/F, \mu_{n}^{\otimes j})  \to H^{i}(F,\mu_{n}^{\otimes j}) $$
qui passent au quotient par la ${\rm R}$-\'equivalence :
$$X_{F}(F)/{\rm R} \times H^i_{nr}(X_{F}/F, \mu_{n}^{\otimes j})  \to H^{i}(F,\mu_{n}^{\otimes j}).$$
 
 De fa\c con plus d\'elicate, pour toute   $k$-vari\'et\'e projective, lisse, connexe $X$, on dispose d'accouplements
 $$CH_{0}(X_{F}) \times  H^i_{nr}(X_{F}/F, \mu_{n}^{\otimes j})  \to H^{i}(F,\mu_{n}^{\otimes j}).$$
Pour une d\'emonstration d\'etaill\'ee dans le
 cas du groupe de Brauer, voir \cite{ABBB}.

 La proposition \ref{Rsimple} donne alors :
 \begin{prop}
Soit  $X$ une  $k$-vari\'et\'e propre  lisse g\'eom\'etriquement connexe.
  Si $X$ est  presque R-triviale, 
  pour tous $i,j$, et tout corps $F$ contenant $k$, on a
$$H^{i}(F,\mu_{n}^{\otimes j})
 \oi H^i_{nr}(X_{F}/F, \mu_{n}^{\otimes j})$$
 et
 $\Br(F) = \Br(X_{F})$.
 \end{prop}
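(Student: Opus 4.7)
Le plan est d'exploiter la factorisation par la R-équivalence des accouplements $X_F(F)/R \times H^i_{nr}(X_F/F, \mu_n^{\otimes j}) \to H^i(F,\mu_n^{\otimes j})$ rappelés juste au-dessus de l'énoncé, combinée à la proposition de la section 3.1 affirmant qu'une $k$-variété intègre presque R-triviale admet un $k$-point élémentairement R-lié à son point générique. Je commence par observer que la presque R-trivialité est stable par extension du corps de base : pour tout $F \supset k$, la $F$-variété $X_F$ est encore presque R-triviale sur $F$, puisque la définition postule la condition sur toute extension de $k$, en particulier sur toute extension de $F$.

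En appliquant cette proposition à $X_F/F$, j'obtiens un $F$-point $m \in X(F)$ tel que le point générique $\eta$ de $X_F$ et le point $m_{F(X)}$ soient élémentairement R-liés sur $X_{F(X)}$. Ce $F$-point $m$ fournit par fonctorialité une rétraction $H^i_{nr}(X_F/F, \mu_n^{\otimes j}) \to H^i(F, \mu_n^{\otimes j})$ du morphisme de pullback $H^i(F, \mu_n^{\otimes j}) \to H^i_{nr}(X_F/F, \mu_n^{\otimes j})$, ce qui règle l'injectivité. Pour la surjectivité, étant donné $\alpha \in H^i_{nr}(X_F/F, \mu_n^{\otimes j})$, la R-équivalence entre $\eta$ et $m_{F(X)}$ entraîne, via la factorisation de l'accouplement par R-équivalence, l'égalité dans $H^i(F(X),\mu_n^{\otimes j})$ des images de $\alpha$ et du tiré en arrière à $H^i(F(X))$ de $m^*(\alpha) \in H^i(F)$. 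L'injectivité du plongement $H^i_{nr}(X_F/F, \mu_n^{\otimes j}) \hookrightarrow H^i(F(X), \mu_n^{\otimes j})$ (conjecture de Gersten, par Bloch-Ogus-Gabber) permet alors de remonter cette égalité au groupe de cohomologie non ramifiée.

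L'énoncé $\Br(F) = \Br(X_F)$ s'obtient soit par passage à la réunion sur $n$ premier à la caractéristique via l'égalité $\Br(X_F)[n] = H^2_{nr}(X_F/F, \mu_n)$, soit en reprenant directement le même schéma d'argumentation, grâce à la pureté du groupe de Brauer pour les schémas réguliers (qui assure $\Br(X_F) \hookrightarrow \Br(F(X))$) et à la compatibilité avec la R-équivalence de l'accouplement au groupe de Brauer détaillée dans \cite{ABBB}. Aucune étape ne me paraît réellement difficile ; le seul point qui mérite vérification est la compatibilité de ces accouplements avec l'extension arbitraire du corps de base, qui découle immédiatement de la fonctorialité contravariante de la cohomologie non ramifiée pour les $k$-morphismes de $k$-variétés propres et lisses.
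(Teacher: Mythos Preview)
Your argument is correct and follows exactly the approach sketched in the paper: pass to the function field, use that the generic point is R-\'equivalent to a constant $k$-point (Proposition from \S 3.1), and exploit the factorisation of the evaluation pairing through R-\'equivalence together with the injection $H^i_{nr}\hookrightarrow H^i(F(X))$. Your write-up simply makes explicit the details (stability of presque R-trivialit\'e under base change, the r\^ole of the retraction via $m^*$, the Brauer case) that the paper's one-line proof leaves to the reader.
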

 \begin{proof}
Pour le voir, il suffit de monter sur le corps $K=k(X)$ est d'utiliser le fait que le point g\'en\'erique
est ${\rm R}$-\'equivalent \`a un point de $X(k)\subset X(k(X))$.
\end{proof}

\begin{cor}
Soit  $X$ une  $k$-vari\'et\'e projective lisse g\'eom\'etriquement connexe.
  Si $X$ est r\'etractilement rationnelle,  alors,  pour tous $i,j$, 
 et tout corps $F$ contenant $k$, on a
$$H^{i}(F,\mu_{n}^{\otimes j})
 \oi H^i_{nr}(X_{F}/F, \mu_{n}^{\otimes j})$$
 et
 $\Br(F) = \Br(X_{F})$.
\end{cor}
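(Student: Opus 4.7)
The plan is to reduce the corollary to the preceding proposition by establishing that $X_{F}$ is almost ${\rm R}$-trivial for every field $F$ containing $k$. Retract rationality is clearly preserved under extension of scalars (the retraction data $U \to V \subset \P^{n}_{k}$ and $V \to U$ base-change to give analogous data over $F$), so $X_{F}$ is retractly rational over $F$. When $F$ is infinite, Proposition \ref{Rsimple} immediately yields the almost ${\rm R}$-triviality of $X_{F}$ over $F$, and the preceding proposition applied to the $F$-variety $X_{F}$ delivers the desired isomorphism $H^{i}(F,\mu_{n}^{\otimes j}) \oi H^i_{nr}(X_{F}/F, \mu_{n}^{\otimes j})$ for all $i,j$. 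The equality $\Br(F) = \Br(X_{F})$ follows at $(i,j)=(2,1)$ from $H^2_{nr}(X_{F}/F, \mu_{n}) = \Br(X_{F})[n]$ by taking the union over $n$ invertible in $F$, together with an analogous argument for the $p$-primary part in positive characteristic.

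When $F$ is a finite field the infinite hypothesis of Proposition \ref{Rsimple} is not met, so one proceeds via a norm argument of the kind already invoked in the proof of the preceding $CH_{0}$-triviality proposition. The most convenient route is to base-change to $F' = F(t)$, which is infinite: the previous step yields $H^{i}(F(t),\mu_{n}^{\otimes j}) \oi H^i_{nr}(X_{F(t)}/F(t),\mu_{n}^{\otimes j})$. The descent to $F$ then relies on the identification $H^{i}(F,\mu_{n}^{\otimes j}) \oi H^i_{nr}(F(t)/F,\mu_{n}^{\otimes j})$ recalled in the excerpt. Concretely, given $\alpha \in H^i_{nr}(X_{F}/F,\mu_{n}^{\otimes j})$, its image in $H^i_{nr}(X_{F(t)}/F(t),\mu_{n}^{\otimes j}) = H^{i}(F(t),\mu_{n}^{\otimes j})$ is seen to be unramified along every codimension-one point of $\P^{1}_{F}$ by a residue computation on the smooth scheme $X_{F}\times\P^{1}_{F}$ via the Bloch--Ogus--Gabber formalism; hence it lies in $H^{i}(F,\mu_{n}^{\otimes j})$, as required.

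The step I expect to be the main obstacle is this last residue verification in the finite-field case: ensuring that all residues of the base-changed class along the divisors of $\P^{1}_{F}$ vanish, so that the class descends. This is a direct purity argument on $X_{F}\times\P^{1}_{F}$, but requires careful bookkeeping with the Gersten resolution that the excerpt only invokes implicitly. Once this point is handled, both claimed isomorphisms follow immediately from the preceding proposition combined with Proposition \ref{Rsimple}.
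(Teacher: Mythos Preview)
Your reduction in the infinite-field case is exactly the paper's: over any infinite $F\supset k$, Proposition~\ref{Rsimple} gives almost ${\rm R}$-triviality of $X_F$, and the preceding proposition then yields the isomorphism.

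For finite $F$ you take a genuinely different route from the paper. The paper passes to finite extensions $F'/F$ of degree prime to the relevant torsion, large enough that the open $V\subset\P^n$ acquires $F'$-points (so Proposition~\ref{Rsimple} applies over $F'$), and then corestricts: this is the ``argument de norme'' alluded to. You instead base-change to the infinite field $F(t)$ and descend via purity on $X_F\times_F\P^1_F$. Your argument is correct: the pullback of $\alpha$ to $H^i(F(X)(t))$ is visibly unramified along each fibre $X_\kappa$ (since $\alpha$ comes from $H^i(F(X))$, which lies in the relevant DVR), and the compatibility of residues along $\Spec F(t)(X)\to\Spec F(t)$ then forces $\partial_P(\beta)$ to die in $H^{i-1}(\kappa(X))$, hence to vanish since $H^{i-1}(\kappa)\hookrightarrow H^{i-1}(\kappa(X))$ by retract rationality. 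Finally $H^i(F(X))\hookrightarrow H^i(F(X)(t))$ gives $\alpha=\pi^*\beta$. The paper's corestriction trick is shorter to state but requires transfers for unramified cohomology; your approach trades that for an explicit Gersten-type computation. Both are standard.

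Two small wording points: you announce a ``norm argument'' and then do something else, which may confuse a reader; and your closing sentence suggests the finite case also follows from the preceding proposition plus Proposition~\ref{Rsimple}, whereas it is precisely your descent step that replaces them there.
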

 
Ceci r\'esulte de l'\'enonc\'e pr\'ec\'edent, sauf dans le cas d'un corps $k$  fini.  
Dans ce cas on monte sur des extensions finies de $k$ suffisamment grosses
et on utilise un argument de norme.

 \begin{prop}
Si une $k$-vari\'et\'e projective et lisse g\'eom\'etriquement connexe $X$ est   $CH_{0}$-triviale,
alors, pour tous $i,j$, alors,  pour tous $i,j$, 
 et tout corps $F$ contenant $k$, on a
$$H^{i}(F,\mu_{n}^{\otimes j})
 \oi H^i_{nr}(X_{F}/F, \mu_{n}^{\otimes j})$$
 et
 $\Br(F) = \Br(X_{F})$.
 \end{prop}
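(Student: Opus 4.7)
Mon plan consiste à adapter la preuve de l'énoncé précédent relatif aux variétés presque R-triviales, en remplaçant l'évaluation aux points par l'accouplement
$$CH_{0}(X_{F}) \times  H^i_{nr}(X_{F}/F, \mu_{n}^{\otimes j})  \to H^{i}(F,\mu_{n}^{\otimes j})$$
rappelé précédemment. Pour l'injectivité de la flèche $H^i(F,\mu_n^{\otimes j}) \to H^i_{nr}(X_F/F,\mu_n^{\otimes j})$, je commencerais par observer que la $CH_0$-trivialité de $X_F$ fournit un zéro-cycle $z$ de degré $1$ sur $X_F$ ; la compatibilité de l'accouplement avec le push-forward le long du morphisme structural $X_F \to \Spec F$ donne alors $\langle z,\beta|_{X_F}\rangle = (\deg z)\cdot \beta = \beta$ pour tout $\beta \in H^i(F,\mu_n^{\otimes j})$.

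Pour la surjectivité, je monterais sur le corps $K=F(X)$. Soient $\alpha \in H^i_{nr}(X_F/F,\mu_n^{\otimes j})$, et $\eta_K \in X_K(K)$ le $K$-point canonique de $X_K := X_F \times_F K$ obtenu à partir du point générique de $X_F$ par changement de base. L'étape cruciale est d'invoquer la proposition \ref{merku} (iii) appliquée à $X_F$ : la classe $[\eta_K]$ dans $CH_0(X_K)$ est l'image, par la flèche de changement de base $CH_0(X_F) \to CH_0(X_K)$, d'un zéro-cycle $z \in CH_0(X_F)$. La fonctorialité de l'accouplement sous le changement de base $F \subset K$ donne alors, en posant $\alpha_K := \alpha|_{X_K}$,
$$\langle [\eta_K],\alpha_K\rangle_K = \mathrm{res}_{K/F}\,\langle z,\alpha\rangle_F \ \in\ H^i(K,\mu_n^{\otimes j}).$$
Comme $\eta_K$ est un $K$-point, ce terme coïncide avec $\eta_K^*\alpha_K$, qui s'identifie à l'image de $\alpha$ via l'inclusion $H^i_{nr}(X_F/F,\mu_n^{\otimes j}) \hookrightarrow H^i(K,\mu_n^{\otimes j})$. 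Cette image provient donc de $H^i(F,\mu_n^{\otimes j})$ ; l'injectivité de l'inclusion précédente (conséquence de Bloch-Ogus-Gabber) permet de conclure que $\alpha$ lui-même en provient.

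L'égalité $\Br(F)=\Br(X_F)$ s'en déduit en prenant la réunion, sur les $n$ inversibles dans $F$, des parties de $n$-torsion, via l'identification $\Br(X_F)[n]=H^2_{nr}(X_F/F,\mu_n)$ valable sur les variétés lisses. Le véritable obstacle dans ce plan est la construction rigoureuse et la vérification de la fonctorialité sous changement de base de l'accouplement $CH_0 \times H^i_{nr} \to H^i$ en degré cohomologique arbitraire ; ce point, traité dans \cite{ABBB} pour le groupe de Brauer, demande un argument analogue fondé sur la conjecture de Gersten pour $H^i_{nr}$.
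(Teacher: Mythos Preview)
Your approach is correct and is exactly the one the paper indicates without detailing: the text merely says that the proof \og passe par l'accouplement avec le groupe de Chow \fg{}, and you have fleshed this out correctly via Proposition~\ref{merku}(iii) together with the base-change functoriality of the pairing $CH_{0}\times H^{i}_{nr}\to H^{i}$. Your closing acknowledgment that the rigorous construction and functoriality of this pairing is the real technical cost is also in line with what the paper itself flags.
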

 
 Cet \'enonc\'e implique le pr\'ec\'edent, mais sa d\'emonstration est un peu plus \'elabor\'ee,
 car elle passe par l'accouplement avec le groupe de Chow.

 \bigskip
 
 On a des \'enonc\'es analogues aux pr\'ec\'edents en rempla\c cant les $H^{\bullet}(\F, \mu_{n}^{\otimes \bullet})$
 des  corps $F$  par les modules de cycles de Rost, par exemples par les groupes $K^{M}_{i}(F)$ ($i\in \N$)
 de $K$-th\'eorie de Milnor des corps. Voir \`a ce sujet l'article de Merkurjev \cite{merk}, qui
 montre que  a trivialit\'e universelle de tous les invariants non ramifi\'es de tous les modules de cycles de Rost
 pour une $k$-vari\'et\'e projective lisse
 connexe donn\'ee $X$ est \'equivalente au fait que cette vari\'et\'e est $CH_{0}$-triviale \cite[Thm. 2.11]{merk}.
 
\bigskip

\begin{rema}
Soit $k$ un corps alg\'ebriquement clos et $\ell$ un premier diff\'erent de la caract\'eristique de $k$.
Soit $X$ une $k$-vari\'et\'e projective, lisse, connexe, de dimension $d$.
Sous l'une des hypoth\`eses ci-dessus, on conclut  $H^1_{\et}(X,\mu_{{\ell}^n})=0$ pour tout entier $n>0$. 
On a donc $H^1_{\et}(X,\Z_{\ell}(1))=0$.
La suite de Kummer en
cohomologie \'etale (o\`u $\G_{m}  \to \G_{m}$ est donn\'e par $x \mapsto x^n$) :
$$ 1 \to \mu_{{\ell}^n} \to \G_{m}  \to \G_{m}  \to 1$$
donne d'abord que $\Pic(X)$ est sans torsion, donc \'egal au groupe de N\'eron-Severi $NS(X)$,
qui est donc lui-m\^eme sans torsion.
Elle donne ensuite
 des suites exactes courtes compatibles (en $n$) :
$$ 0 \to \Pic(X)/\ell^n \to H^2_{\et}(X,\mu_{{\ell}^n}) \to \Br(X)[\ell^n] \to 0.$$
En passant \`a la limite on obtient une suite exacte
$$ 0 \to NS(X)\otimes \Z_{\ell} \to H^2_{\et}(X,\Z_{\ell}(1)) \to T_{\ell}(\Br(X)) \to 0.$$
Un module de Tate $T_{\ell}(A)= {\rm lim proj}_{n} A[\ell^n]$ est toujours sans torsion. Ainsi $H^2_{\et}(X,\Z_{\ell}(1))$ est sans torsion.
Si $k=\C$, les th\'eor\`emes de comparaison donnent alors $H^1_{Betti}(X,\Z)=0$ et
$H^2_{Betti}(X,\Z)_{tors}=0$. On va voir ci-dessous que l'on a aussi $H^3_{Betti}(X,\Z)_{tors}=0$. 
Par diverses dualit\'es de Poincar\'e, ces r\'esultats impliquent $H^{2d-1}_{Betti}(X,\Z)=0$.
En utilisant la d\'ecomposition de la diagonale, un argument de correspondance et
des d\'esingularisations, C.~Voisin \'etablit ces r\'esultats. Elle \'etablit aussi
$H^{2n-2}_{Betti}(X,\Z)_{tors}=0$.
\end{rema}

 \subsection{Calcul du groupe de Brauer}
 
 \begin{prop}\label{H3Bettitors}
 Soit $k$ un corps alg\'ebriquement clos de car. z\'ero, et soit $X$
 une $k$-vari\'et\'e projective et lisse rationnellement connexe.

 (a)  Si $k=\C$, alors $\Br(X) \oi H^3_{Betti}(X(\C),\Z)_{tors}$.
 
 (b) En g\'en\'eral, $\Br(X)$ est un groupe fini isomorphe \`a $\oplus_{\ell} H^3_{\et}(X, \Z_{\ell}(1))$.
 
(c) Si $F$ est un corps qui contient $k$, l'application naturelle $\Br(X) \to \Br(X_{F} )/\Br(F)$
est un isomorphisme.
\end{prop}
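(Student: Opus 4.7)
La strat\'egie commune est d'analyser la suite de Kummer
$$0 \to \Pic(X)/n \to H^2_{\et}(X,\mu_n) \to \Br(X)[n] \to 0,$$
en tirant parti des annulations $H^1(X,\mathcal{O}_X) = H^2(X,\mathcal{O}_X) = 0$ et $\pi_1^{\et}(X)=0$ fournies par la connexit\'e rationnelle en caract\'eristique z\'ero.

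Pour (a), je confronterais cette suite sur $\C$ \`a la suite issue du coefficient universel
$$0 \to H^2_{Betti}(X(\C),\Z)/n \to H^2_{Betti}(X(\C),\Z/n) \to H^3_{Betti}(X(\C),\Z)[n] \to 0.$$
La suite exponentielle $0 \to \Z \to \mathcal{O}_X^{an} \to (\mathcal{O}_X^{an})^* \to 0$, combin\'ee \`a GAGA et aux deux annulations, donne $\Pic(X) \oi H^2_{Betti}(X,\Z)$, groupe sans torsion puisque $\pi_1(X(\C))=0$. Le th\'eor\`eme de comparaison d'Artin $H^2_{\et}(X,\mu_n) \oi H^2_{Betti}(X,\Z/n)$ identifie les termes m\'edians, d'o\`u un isomorphisme compatible en $n$ : $\Br(X)[n] \oi H^3_{Betti}(X,\Z)[n]$. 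Comme $\Br(X)$ est de torsion, passer \`a la colimite fournit $\Br(X) \oi H^3_{Betti}(X,\Z)_{tors}$, fini car $H^3_{Betti}$ est de type fini.

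Pour (b), je r\'eduirais au cas $k=\C$ par principe de Lefschetz : $X$ provient d'un sch\'ema sur un sous-corps de type fini de $k$, qui se plonge dans $\C$. Par changement de base lisse propre, $H^i_{\et}(X,\mu_{\ell^n})$ est pr\'eserv\'e, et $\Pic(X)$ \'egalement (discret car $H^1(\mathcal{O}_X)=0$). Le m\^eme argument qu'en (a), transpos\'e en cohomologie \'etale, fournit pour chaque $\ell$ un isomorphisme entre la composante $\ell$-primaire de $\Br(X)$ et $H^3_{\et}(X,\Z_\ell(1))_{tors}$ ; la somme sur $\ell$ donne (b), la finitude globale r\'esultant de (a).

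Pour (c), point nouveau, j'utiliserais la suite spectrale de Leray pour $X_F \to \Spec F$ \`a coefficients $\G_m$. En caract\'eristique z\'ero $\bar F = F^s$ ; d'apr\`es (b) et l'invariance par extension alg\'ebriquement close, $\Pic(X_{\bar F}) = \Pic(X)$ et $\Br(X_{\bar F}) = \Br(X)$ portent l'action galoisienne triviale. Hilbert~90 annule $H^1(F,\bar F^*)$, et $\Pic(X)$ \'etant finiment engendr\'e sans torsion, $H^1(F,\Pic(X))=0$. La suite exacte des termes de bas degr\'e se r\'eduit donc \`a $\Br(F) \oi \Ker[\Br(X_F) \to \Br(X)]$. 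Enfin, un $k$-point de $X$, qui existe puisque $X$ est rationnellement connexe sur un corps alg\'ebriquement clos, fournit par \'evaluation une r\'etraction scindant la suite courte
$$0 \to \Br(F) \to \Br(X_F) \to \Br(X) \to 0,$$
d'o\`u l'isomorphisme annonc\'e. L'obstacle principal se trouve dans (a), o\`u l'identification $\Pic(X) = H^2_{Betti}(X,\Z)$ exige d'articuler correctement les trois annulations cohomologiques ; (b) et (c) s'en d\'eduisent par des arguments assez standards.
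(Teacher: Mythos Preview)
Your proof is correct, and for (a)--(b) it follows a genuinely different route from the paper. The paper invokes Grothendieck's general structure theorem --- $\Br(X)$ is an extension of the finite group $\oplus_\ell H^3_{\et}(X,\Z_\ell(1))_{\tors}$ by a divisible group --- and then kills the divisible part by a Chow-theoretic argument: rational connectedness produces an integer $N$ annihilating $A_0(X_F)$ for every $F\supset k$, hence annihilating the divisible part. You instead exploit the Hodge-theoretic input $H^2(X,\mathcal{O}_X)=0$ via the exponential sequence to identify $\Pic(X)$ with $H^2_{Betti}(X,\Z)$ outright, and then match the Kummer sequence against universal coefficients term by term. Your approach is more direct and self-contained; the paper's approach is more robust in that it applies whenever $A_0$ has uniformly bounded exponent, without needing $h^{0,2}=0$. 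For (c) the two arguments coincide in substance: the low-degree Leray sequence you write out is exactly the exact sequence the paper quotes via its Proposition~\ref{picpermut} and the Remark following it. One small point to tighten: Leray gives you $\Br(F)=\Ker[\Br(X_F)\to\Br(X_{\bar F})]$ but not the surjectivity of $\Br(X_F)\to\Br(X_{\bar F})$ needed for your short exact sequence; that surjectivity comes from (b), since the composite $\Br(X)\to\Br(X_F)\to\Br(X_{\bar F})$ is an isomorphism. The paper makes precisely this move (phrased via the subgroups $\Br^P$), and once you record it the splitting by the $k$-point is not even needed for the quotient statement.
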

 \begin{proof}
 
 En utilisant la suite de Kummer
 $$ 1 \to \mu_{{\ell}^n} \to \G_{m}  \to \G_{m}  \to 1$$
 en cohomologie \'etale,
 Grothendieck \cite{GGB} a montr\'e que pour toute $k$-vari\'et\'e  projective lisse connexe,
le groupe de Brauer de $X$ est une extension du groupe fini 
 $\oplus_{\ell} H^3_{\et}(X, \Z_{\ell}(1))$ par un groupe divisible.
 Si $X$ est rationnellement connexe, on montre qu'il existe un entier $N>0$
qui annule $A_{0}(X_{F})$ pour tout corps $F$ contenant $k$. Ceci implique
que le groupe divisible est annul\'e par $N$, donc est nul.
Ceci \'etablit (a) et (b). Pour (c), consid\'erons les inclusions
$k \subset F \subset \overline{F}$. 
On a les applications naturelles
$$ \Br(X) \to \Br(X_{F}) \to \Br(X_{\overline{F}}).$$
Fixons un $k$-point  $P$ de $X(k)$, et consid\'erons les sous-groupes
de ces divers groupes form\'es des \'el\'ements nuls  en $P$. On a alors les applications
$$ \Br^P(X) \to \Br^P(X_{F}) \to \Br^P(X_{\overline{F}}).$$
D'apr\`es (b), la compos\'ee est un isomorphisme. 
 Comme on a 
 $\Pic(X) = \Pic(X_{F}) = \Pic(X_{\overline{F}})$,  la proposition \ref{picpermut}
 donne 
 $$\Br(F) = \Ker [\Br(X_{F}) \to \Br(X_{\overline{F}})]$$ et donc
$ \Br^P(X_{F}) \hookrightarrow \Br^P(X_{\overline{F}}).$ On a donc $ \Br^P(X) = \Br^P(X_{F})$.
 \end{proof}

  Soit maintenant $k$ de caract\'eristique z\'ero quelconque. 
  
  Pour $X$ une $k$-vari\'et\'e projective et lisse
  g\'eom\'etriquement connexe,  avec $X(k) \neq \emptyset$,
au paragraphe \ref{galpic} on a vu comment calculer $$\Ker[ \Br(X) \to \Br(X\times_{k}\k)^{{\rm Gal}(\k/k   )}].$$
La question du calcul du groupe des invariants $\Br(X\times_{k}\k)^g$ est d\'elicate, c'est un probl\`eme ``arithm\'etique'',
nous n'en  parlerons pas
ici.

Pour $X$ une $\C$-vari\'et\'e projective, lisse, rationnellement connexe,
la formule $\Br(X) \oi H^3_{Betti}(X(\C),\Z)_{tors}$ donn\'ee ci-dessus est th\'eoriquement satisfaisante.
Mais en pratique, quand on se donne une vari\'et\'e concr\`ete, elle a tendance \`a \^etre singuli\`ere.
ll faudrait la d\'esingulariser, ce qui en grande dimension est difficile, en outre il faut ensuite calculer
sur un mod\`ele projectif et lisse le groupe $ H^3_{Betti}(X(\C),\Z)_{tors}$. C'est ce qu'avaient fait Artin et
Mumford \cite{artinmumford} pour une vari\'et\'e de dimension 3 fibr\'ee en coniques sur le plan projectif
complexe.

Dans \cite{ctoj}, on a donn\'e une autre fa\c con d'\'etablir $\Br_{nr}(\C(X)/\C) \neq 0$ pour
des fibrations en coniques $X$ sur le plan complexe.

Soit $k$ un corps, $k^s$ une  cl\^oture s\'eparable  et $X$ une $k$-vari\'et\'e projective et lisse
  g\'eom\'etriquement connexe. Si l'on omet l'hypoth\`ese  $X(k) \neq \emptyset$, on a une suite
  exacte
  $$ 0 \to \Pic(X) \to \Pic(X^s)^g \to \Br(k) \to \Ker[\Br(X) \to \Br(X^s)] \to H^1(k, \Pic(X^s)).$$
  
Si $X$ est une conique lisse $C$  sans $k$-point,  de corps des fonctions $k(C)$, on trouve une suite exacte
$$0 \to \Z/2 \to \Br(k) \to \Br(C) \to 0.$$
Si ${\rm car}(k) \neq 2$ et $C$ est donn\'ee par l'\'equation homog\`ene $x^2-ay^2-bz^2=0$,
le noyau de $\Br(k) \to \Br(C)$ -- qui est aussi le noyau de $\Br(k) \to \Br(k(C))$ car $\Br(C)$ s'injecte dans $\Br(k(C))$
puisque $C$ est lisse --
est donn\'e par la classe de l'alg\`ebre de quaternions $(a,b)$. Ce r\'esultat remonte \`a Witt,
et fut \'etendu aux vari\'et\'es de Severi-Brauer par F. Ch\^{a}telet.

Le point de vue ``birationnel'' adopt\'e par Ojanguren et moi dans \cite{ctoj} est le suivant.
On a une vari\'et\'e projective et lisse  (non explicite) $X$ sur $\C$ munie d'une fibration 
$p : X \to S=\P^2_{\C}$ dont la fibre g\'en\'erique est une conique $C/\C(S)$ sans point rationnel
(i.e. la fibration n'a pas de section rationnelle). La fibration d\'eg\'en\`ere le long d'une union finie
de courbes int\`egres  $D_{i}\subset S$.
On dispose de la classe $\alpha \in \Br(\C(S))$ de la conique g\'en\'erique, d'ordre 2,
non nulle, et qui forme exactement le noyau de l'application
$$\Br(\C(S)) \to \Br(\C(X)).$$ 
Comme $S=\P^2_{\C}$, on a $\Br(S)=0$, et l'application r\'esidu en tous les points de codimension 1 de $S$
donne une injection
$$ \delta :  \Br(\C(S)) \hookrightarrow  \oplus_{x \in S^{(1)}  } H^1(\C(x),\Q/\Z).$$
La classe $\alpha$ a un nombre fini de r\'esidus non triviaux, correspondant aux
points o\`u la fibration d\'eg\'en\`ere.
Sous des hypoth\`eses   sur la d\'eg\'en\'erescence,
on exhibe une autre classe $\beta \in \Br(\C(S))$ dont  le r\'esidu total $\delta(\beta)$
est non nul et form\'e d'un sous-ensemble propre des $\delta_{x}(\alpha)$.
Si le diviseur de d\'eg\'en\'erescence est une union de courbes lisses dont 
la r\'eunion est un diviseur \`a croisements normaux,
par comparaison avec les r\'esidus aux points de codimension 1 de $X$,t
ceci assure que $\beta$ devient non ramif\'e dans $\Br(\C(X))$ (sans calcul
explicite d'un mod\`ele projectif et lisse de $X$), et par ailleurs,
que $\beta$ n'est pas dans le noyau $\Z/2 = \Ker [\Br(\C(S) \to \Br(\C(X))].$
Ainsi $\Br_{nr}(\C(X)) \neq 0$, et  la $\C$ vari\'et\'e $X$ n'est pas r\'etractilement rationnelle,
ni m\^{e}me $CH_{0}$-triviale.

\subsection{Calcul de la  cohomologie non ramifi\'ee de degr\'e sup\'erieur}

 Pour $X$ une vari\'et\'e projective lisse rationnellement connexe sur $\C$,    
  on ne dispose pas  pour les invariants cohomologiques sup\'erieurs $H^{i}_{nr}(\C(X)/\C, \Q/\Z)$, $i \geq 3$,
d'un analogue des diff\'erents \'enonc\'es de  la proposition  \ref{H3Bettitors}.
De fait il est peu probable que ces invariants soient constants dans une famille
projective et lisse de telles vari\'et\'es (voir \cite{CTVoisin} pour une discussion).

Pour $X$ comme ci-dessus, on a un  certain nombre de r\'esultats  int\'eressants
en degr\'e $i=3$, et quelque r\'esultats  en degr\'e $i>3$.
Je renvoie ici le lecteur aux travaux \cite{CTVoisin}, \cite{voisinInv} et \cite{CTpourmerk}.
Dans \cite{CTVoisin}, avec C.~Voisin,
on \'etablit un 
lien entre $H^{3}_{nr}(\C(X)/\C, \Q/\Z)$ et  la conjecture de Hodge enti\`ere pour les cycles
de codimension 2. 

\medskip

 Le cas des hypersurfaces cubiques dans $\P^n_{\C}$, $n \geq 4$,
a \'et\'e particuli\`erement \'etudi\'e, en particulier par C. Voisin \cite{voisinInv},
voir aussi \cite{CTpourmerk}.
Pour de telles hypersurfaces, on a $H^{3}_{nr}(\C(X)/\C, \Q/\Z)=0$. 
Pour $F$ un corps contenant $\C$,
on sait que l'application
$$ H^3(F,\Q/\Z) \to H^3_{nr}(F(X)/F,\Q/\Z)$$
est un isomorphisme pour $n\geq 5$. Pour $n=4$, la question est ouverte.

\bigskip

Le point de vue birationnel adopt\'e dans \cite{ctoj} pour revisiter l'exemple d'Artin et Mumford 
repose sur le fait que sur un corps $k$ de car. diff\'erente de 2, et pour une conique $C$
sur $k$ d'\'equation homog\`ene $x^2-ay^2-bz^2=0$, avec $a,b \in k^*$, le noyau
de l'application
$H^2(k,\Z/2) \to H^2(k(C), \Z/2)$ est d'ordre au plus 2, engendr\'e par la classe
de l'alg\`ebre  de quaternions $(a,b)$, qui est aussi la classe du cup produit
de la classe $a\in k^*/k^{*2}=H^1(k,\Z/2)$ et de  la classe $b\in k^*/k^{*2}=H^1(k,\Z/2)$.

Sur un corps $k$ de car. diff\'erente de 2, pour tout entier $n \geq 1$, et pour
$a_{1}, \dots, a_{n} \in k^*$,  la
$n$-forme de Pfister $<<a_{1}, \dots, a_{n}>>$ est la forme  quadratique 
en $2^n$ variables
d\'efinie par $<1,-a_{1}> \otimes \dots \otimes  <1,-a_{n}>$. 
De telles formes ont la propri\'et\'e qu'elles sont hyperboliques d\`es qu'elles sont isotropes.
On appelle voisine de Pfister d'une $n$-forme de Pfister $\psi$ une sous-forme $\phi$  de  $\psi$ de rang strictement plus grand que $2^n$. 
Les quadriques d\'efinies par une forme de Pfister et par un voisine de cette forme sont
stablement $k$-birationnellement \'equivalentes. C'est ainsi le cas de la conique d'\'equation 
$x^2-ay^2-bz^2=0$ et de la quadrique de $\P^3_{k}$ d'\'equation $x^2-ay^2-bz^2+abt^2=0$.

Une g\'en\'eralisation de la propri\'et\'e remarquable des coniques d\'ecrites ci-dessus
est le th\'eor\`eme suivant.

\begin{theo}\label{ovv}
Soit $k$ un corps de caract\'eristique diff\'erente de 2. Soit  $<<a_{1}, \dots, a_{n}>>$
une $n$-forme de Pfister.  Soit $Q\subset \P^{2^n-1}$ la quadrique lisse qu'elle d\'efinit.
 Le noyau de l'application naturelle
de groupes de cohomologie galoisienne
$$H^n(k,\Z/2) \to H^n(k(Q),\Z/2)$$
est engendr\'e par le cup-produit $(a_{1}) \cup \dots (a_{n})$, et il est non nul
si et seulement si la forme de Pfister est anisotrope, i.e. la quadrique $Q$ n'a pas de $k$-point.
\end{theo}

Ce th\'eor\`eme fut \'etabli pour $n=3$ en 1974 par Arason, avant les r\'esultats spectaculaires de Merkur'ev et Sousline.
Il avait \'et\'e pr\'ec\'ed\'e par un r\'esultat analogue d'Arason et Pfister pour les groupes de Witt, qu'on pourrait aussi utliser.
Il fut \'etabli pour $n=4$ par Jacob et Rost en 1989, et obtenu pour tout $n$ par Orlov, Vishik et Voevodsky \cite{OVV} en 2007 
comme cons\'equence des travaux de Voevodsky sur la conjecture de Milnor.

Une fois le point de vue birationnel adopt\'e dans \cite{ctoj}, il est devenu clair comment
\'etendre les r\'esultats de non rationalit\'e en dimension sup\'erieure.
Dans \cite{ctoj},  avec Ojanguren, nous construisons des vari\'et\'es a priori singuli\`eres $Y$
munies d'une fibration sur  $\P^3_{\C}$ dont la fibre g\'en\'erique est d\'efinie par
une  voisine d'une $3$-forme de Pfister anisotrope  $<<a_{1}, a_{2}, b_{3}c_{3}>>$ sur le corps $\C(\P^3)$, 
telle que la classe    $\beta= (a_{1}, a_{2}, b_{3})  \in H^3(\C(\P^3),\Z/2)$ soit
non nulle, car ramifi\'ee sur $\P^3_{\C}$, diff\'erente de $\alpha =   (a_{1}, a_{2}, a_{3}) \in H^3(\C(\P^3),\Z/2)$, car les ramifications 
sur $\P^3_{\C}$ diff\`erent,
et dont l'image $\beta_{\C(X)}$ est dans $H^3_{nr}(\C(X)/\C, \Z/2)$ car la ramification de $\beta$
est ``mang\'ee'' par celle de $\alpha$. Comme on a $$\beta \notin \{0,\alpha\}  \subset H^3(\C(\P^3), \Z/2),$$ le th\'eor\`eme 
\ref{ovv}, dans le cas $n=3$ (Arason) assure alors $\beta_{\C(X)}\neq 0$.

Pour  accomplir le programme, il faut trouver les \'el\'ements $a_{1},a_{2}, b_{3},c_{3} \in \C(\P^3)$.
On les obtient dans \cite{ctoj} comme des produits d'un nombre assez grand de formes lin\'eaires.

Dans \cite{Schr3}, Schreieder a r\'eussi \`a faire des constructions analogues sur $\P^n_{\C}$
pour tout $n$ (les $a_{i}$, $b_{j}$, $c_{j}$ faisant ici intervenir des formes de degr\'e 2 sur $\P^n$).
Le th\'eor\`eme \ref{ovv} donne alors des vari\'et\'es $X$ munies d'une fibration
sur $\P^n_{\C}$ dont la fibre g\'en\'erique est une (voisine d'une) $n$-quadrique de  Pfister
et qui satisfont $H^n_{nr}(\C(X),\Z/2)\neq 0$, et qui donc ne sont pas r\'etractilement rationnelles.

On trouve d'autres utilisations de ces id\'ees dans des travaux d'E. Peyre et de A. Asok.

\begin{rema}
Soit $k$ un corps. Soit $Q \subset \P^n_{k}$, $n \geq 2$ une quadrique lisse.
L'application
$\Br(k) \to  \Br(Q) = \Br_{nr}(k(Q)/k)$ est surjective.
Pour $i\geq 3$,  et ${\rm car}(k) \neq 2$,
le conoyau de 
$$H^{i}(k, \Q_{2}/\Z_{2}(i-1)) \to H^{i}_{nr}(k(Q)/k,\Q_{2}/\Z_{2}(i-1))$$
a \'et\'e \'etudi\'e par Kahn, Rost, Sujatha. 
Pour $i=3$, ils ont montr\'e que l'application est surjective, sauf peut-\^{e}tre
si $Q$ est d\'efinie par une forme d'Albert $<-a,-b,ab,c,d,-cd>$.
 
 \end{rema}
\subsection{Diff\'erentielles}

L'\'enonc\'e suivant est \'etabli par Totaro dans \cite{T}.

\begin{prop}\label{tot}
Soit $X$ une $k$-vari\'et\'e projective et lisse connexe sur un corps $k$.
Si  $X$  est $CH_{0}$-triviale, alors 
 $H^0(X,\Omega^{i})=0$ pour tout
 entier $i>0$.
\end{prop}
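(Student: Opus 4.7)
L'approche propos\'ee est la m\'ethode classique de la \emph{d\'ecomposition de la diagonale}. L'hypoth\`ese $CH_{0}$-triviale et la caract\'erisation de Merkurjev (Proposition \ref{merku}(iii)) fournissent un z\'ero-cycle $z \in Z_{0}(X)$ de degr\'e 1 tel que $[\eta] = z_{k(X)}$ dans $CH_{0}(X_{k(X)})$. En identifiant $X_{k(X)}$ avec la fibre g\'en\'erique de la premi\`ere projection $p_{1} : X \times X \to X$, cette \'egalit\'e, combin\'ee avec la suite exacte de localisation pour les groupes de Chow, se traduit par une d\'ecomposition dans $CH_{d}(X \times X)$ :
$$ \Delta_{X} = X \times z + Z, $$
o\`u $Z$ est un cycle support\'e sur $D \times X$ pour un certain ferm\'e propre $D \subsetneq X$.

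On fait ensuite agir cette identit\'e comme correspondances sur $H^{0}(X,\Omega^{i}_{X})$ pour $i > 0$. Soit $\alpha \in H^{0}(X,\Omega^{i}_{X})$. La diagonale agit comme l'identit\'e : $\Delta_{X}^{*}(\alpha) = \alpha$. La classe de $X \times z$ dans la cohomologie de Hodge s'\'ecrit par K\"unneth comme $1 \boxtimes [z] \in H^{0}(X,\mathcal{O}_{X}) \otimes H^{d}(X,\Omega^{d}_{X})$, et le produit
$$ p_{2}^{*}(\alpha) \cup \cl(X \times z) = 1 \boxtimes (\alpha \cup [z]) $$
vit dans $H^{d}(X \times X, \Omega^{i+d}_{X \times X})$, qui est nul puisque $i + d > d = \dim X$ entra\^{\i}ne $\Omega^{i+d}_{X} = 0$ ; ainsi $(X \times z)^{*}(\alpha) = 0$.

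Pour $Z$, l'observation cruciale est que $Z|_{U \times X} = 0$ o\`u $U = X \setminus D$ est l'ouvert compl\'ementaire dans la premi\`ere coordonn\'ee. Par changement de base plat associ\'e \`a l'inclusion ouverte $j : U \hookrightarrow X$, on aurait
$$ j^{*}(Z^{*}(\alpha)) = (Z|_{U \times X})^{*}(\alpha) = 0. $$
Comme $X$ est lisse, $\Omega^{i}_{X}$ est localement libre, et toute section globale s'annulant sur un ouvert dense est identiquement nulle ; on conclut $Z^{*}(\alpha) = 0$, puis $\alpha = \Delta_{X}^{*}(\alpha) = (X \times z)^{*}(\alpha) + Z^{*}(\alpha) = 0$.

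Le point le plus d\'elicat sera la justification rigoureuse de l'action des correspondances sur la cohomologie coh\'erente $H^{0}(X,\Omega^{i}_{X})$ et de sa compatibilit\'e avec les restrictions aux ouverts, via la formule de projection et le changement de base plat pour les morphismes propres et lisses. L'argument esquiss\'e ci-dessus utilise implicitement la cohomologie de Hodge sur un corps de caract\'eristique z\'ero ; dans l'article original \cite{T}, Totaro travaille plut\^ot avec les groupes de cohomologie de Hodge--Witt, ce qui lui permet de traiter la caract\'eristique positive tout en obtenant des invariants $CH_{0}$-triviaux suppl\'ementaires.
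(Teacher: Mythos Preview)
Votre d\'emonstration est correcte et suit exactement l'approche de Totaro que le texte cite sans la reproduire : d\'ecomposition de la diagonale et action des correspondances sur la cohomologie de Hodge. Une pr\'ecision cependant : dans \cite{T}, Totaro travaille directement avec la cohomologie de Hodge $H^{q}(X,\Omega^{p})$ en caract\'eristique arbitraire, et non avec la cohomologie de Hodge--Witt ; le point d\'elicat que vous signalez --- l'action des correspondances sur $\bigoplus_{p,q} H^{q}(X,\Omega^{p})$ sur un corps quelconque --- est assur\'e par les r\'esultats de Chatzistamatiou--R\"ulling sur les images directes des faisceaux de Hodge le long des morphismes propres, de sorte que votre r\'eserve sur la caract\'eristique z\'ero est inutilement restrictive.
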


\begin{rema}
La d\'emonstration utilise des applications cycles \`a valeurs dans diverses th\'eories
cohomologiques, et des arguments de correspondances.
Si $X$ projective et lisse est presque ${\rm R}$-triviale, alors elle est $CH_{0}$-triviale (Lemme \ref{RtriviCH_{0}triv}),
et donc $H^0(X,\Omega^{i})=0$ pour tout $i>0$. 
\end{rema}

 \begin{prop}\label{remgen}
 Soit $k$ un corps. Soit $F$ un foncteur contravariant de la cat\'egorie des $k$-sch\'emas
 vers la cat\'egorie des ensembles.
 Supposons que pour toute $k$-vari\'et\'e lisse int\`egre $U$
 la fl\`eche $F(U) \to F(\P^1_{U})$ induite par la projection $\P^1_{U} \to U$ soit un isomorphisme.
 Soit $X$ une $k$-vari\'et\'e propre, int\`egre, g\'en\'eriquement lisse.
 Si $X$ est presque ${\rm R}$-triviale,
 alors il existe un ouvert  lisse
  non vide $U \subset X$ tel que 
 $$ Im(F(X) \to F(U)) = Im(F(k) \to F(U)),$$
 la fl\`eche $F(k) \to F(U)$ \'etant donn\'ee par la projection $U \to \Spec(k)$.
  \end{prop}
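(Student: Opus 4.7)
La stratégie consiste à étaler la ${\rm R}$-liaison élémentaire entre le point générique de $X$ et un $k$-point $m \in X(k)$, à invoquer la propreté de $X$ pour prolonger la famille obtenue sur $\P^1_U$ tout entier, puis à utiliser l'hypothèse d'invariance par $\P^1$ de $F$. \emph{Première étape.} La proposition énoncée après la définition de la presque ${\rm R}$-trivialité fournit un $k$-point $m \in X(k)$ tel que, pour $K = k(X)$, les $K$-points $\eta$ et $m_K$ de $X_K$ soient élémentairement ${\rm R}$-liés via un $K$-morphisme défini sur un ouvert de $\P^1_K$. Par étalement au voisinage du point générique de $X$ et rétrécissement convenable, on obtient un ouvert lisse non vide $U \subset X$, un ouvert $\mathcal{V} \subset \P^1_U$, un $k$-morphisme $H : \mathcal{V} \to X$, et deux sections $s_1, s_m : U \to \mathcal{V}$ de la projection $\pi : \mathcal{V} \to U$, telles que $H \circ s_1 = \iota : U \hookrightarrow X$ soit l'inclusion et $H \circ s_m$ se factorise par $U \to \Spec(k) \overset{m}{\to} X$.

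\emph{Deuxième étape.} $X$ étant propre et $\P^1_U$ régulière, l'application rationnelle $\P^1_U \dashrightarrow X$ induite par $H$ a un lieu d'indétermination de codimension $\geq 2$ dans $\P^1_U$, donc de dimension $\leq \dim(U) - 1$. Sa projection sur $U$ est ainsi contenue dans un fermé strict de $U$. Quitte à rétrécir à nouveau $U$, on peut supposer $H$ défini partout sur $\P^1_U$ en tant que $k$-morphisme $H : \P^1_U \to X$, et $s_1, s_m$ sections de $\pi : \P^1_U \to U$.

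\emph{Troisième étape.} L'hypothèse sur $F$ fournit un isomorphisme $\pi^* : F(U) \iso F(\P^1_U)$. Comme $\pi \circ s_1 = \pi \circ s_m = \mathrm{id}_U$, on en déduit $s_1^* = s_m^* = (\pi^*)^{-1}$ sur $F(\P^1_U)$. En précomposant par $H^*$, les deux applications $(H \circ s_1)^* = \iota^*$ et $(H \circ s_m)^*$ coïncident comme morphismes $F(X) \to F(U)$. Or la première est la restriction à $U$, et la seconde se factorise par $m^* : F(X) \to F(k)$ suivi de $F(k) \to F(U)$. On en déduit $\mathrm{Im}(F(X) \to F(U)) \subseteq \mathrm{Im}(F(k) \to F(U))$, l'inclusion réciproque résultant de la factorisation $U \hookrightarrow X \to \Spec(k)$.

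L'étape la plus délicate est la seconde: c'est le prolongement du morphisme de $\mathcal{V}$ à $\P^1_U$ tout entier qui permet d'invoquer l'invariance par $\P^1$ pour déduire $s_1^* = s_m^*$, et c'est ici qu'intervient de manière essentielle la propreté de $X$, via l'absence d'indétermination en codimension $1$ pour une application rationnelle d'une variété régulière vers une variété propre.
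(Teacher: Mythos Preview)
Your proof is correct and follows essentially the same line as the paper's: spread out the ${\rm R}$-link between the generic point and a $k$-point over an open $U$, invoke properness of $X$ to obtain a morphism defined on all of $\P^1_U$, then use the $\P^1$-invariance of $F$ to conclude that the two sections induce the same map $F(X) \to F(U)$. The only difference is that you exploit the earlier proposition giving an \emph{elementary} ${\rm R}$-link between $\eta$ and $m_{k(X)}$, so a single morphism $\P^1_U \to X$ suffices, whereas the paper's proof works with an ${\rm R}$-equivalence and hence a finite chain $f_0,\dots,f_s : \P^1 \times U \to X$; you are also more explicit about the codimension-$2$ indeterminacy argument used to extend $H$.
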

  \begin{proof}
  Soit $U \subset X$ un ouvert, et soit $g : P^1\times_{k}U \to X$  un $k$-morphisme.
  Soient $f_{1}, f_{2}$ deux sections de la projection $p: P^1\times_{k}U \to U$.
  Alors les applications $F(X) \to F(U)$ d\'efinies par
   $(g\circ f_{1})^*$ et $(g\circ f_{2})^*$ co\"{\i}ncident. En effet pour tout $\alpha \in F(X)$,
   on a $g^*(\alpha)=p^*(\beta)$, et donc  $f_{i}^* \circ g^*(\alpha)=  f_{i}^* \circ p^*(\beta)= \beta$
   pour $i=1,2$.
   
   Soit $F=k(X)$ le corps des fonctions de $X$,  soit $\eta \in X$ le point g\'en\'erique.
   Il existe  $n \in X(k)$ tel que
   sur $X_{F}$, les points $\eta \in X_{F}(F)$ et $n_{F}  \in X_{F}(F)$ sont ${\rm R}$-\'equivalents.
Comme $X$ est propre sur $k$, ceci implique qu'il existe un ouvert non vide $U \subset X$
et une famille finie de $k$-morphismes 
$f_{i} : \P^1\times U  \to X$, $i=0, \dots, s$, tels que $f_{0}(0,u) = u$, que $f_{s}(1,u)=n$,
et que $f_{i}(1,u)=f_{i+1}(0,u)$ pour $0\leq i < s$.
 L'\'enonc\'e r\'esulte alors de ce qui pr\'ec\`ede.
  \end{proof}
 
 \begin{prop}\label{Rtot}
 Soient $k$ un corps infini et $X$ une $k$-vari\'et\'e propre et lisse, g\'eom\'etriquement connexe.
 Si $X$ est presque ${\rm R}$-triviale, alors  $H^0(X,(\Omega^{i})^{\otimes m})=0$ pour tout $i>0$ et tout $m>0$.
\end{prop}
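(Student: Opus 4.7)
On se propose d'appliquer la proposition \ref{remgen} au foncteur contravariant $F$ qui, pour tout $k$-sch\'ema $U$, donne $F(U) = H^0(U, (\Omega^i_{U/k})^{\otimes m})$, les entiers $i,m>0$ \'etant fix\'es.

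La premi\`ere \'etape consiste \`a v\'erifier que $F$ satisfait l'hypoth\`ese d'invariance de la proposition \ref{remgen}: la fl\`eche $F(U) \to F(\P^1_U)$ doit \^etre un isomorphisme pour tout $U$ lisse int\`egre sur $k$. On utilisera la d\'ecomposition
$$\Omega^1_{\P^1_U/k} \simeq p_U^* \Omega^1_{U/k} \oplus p_{\P^1}^* \Omega^1_{\P^1/k}$$
et l'isomorphisme $\Omega^1_{\P^1/k} \simeq \mathcal{O}_{\P^1}(-2)$, qui entra\^{\i}nent
$$\Omega^i_{\P^1_U/k} \simeq p_U^* \Omega^i_{U/k} \;\oplus\; p_U^* \Omega^{i-1}_{U/k} \otimes p_{\P^1}^* \mathcal{O}_{\P^1}(-2).$$
En d\'eveloppant la $m$-i\`eme puissance tensorielle comme somme directe de $2^m$ termes puis en poussant par $p_{U*}$ via la formule de projection, tout terme contenant un facteur $\mathcal{O}_{\P^1}(-2j)$ avec $j\geq 1$ dispara\^{\i}t (puisque $H^0(\P^1,\mathcal{O}(-2j))=0$), et il ne subsiste que $(\Omega^i_{U/k})^{\otimes m}$. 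Cela fournit l'isomorphisme cherch\'e.

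Dans un deuxi\`eme temps, j'appliquerai la proposition \ref{remgen}: il existera un ouvert non vide lisse $U \subset X$ tel que $\mathrm{Im}(F(X)\to F(U)) = \mathrm{Im}(F(k)\to F(U))$. Or $F(k) = H^0(\Spec(k), (\Omega^i)^{\otimes m}) = 0$ puisque $i>0$, si bien que la restriction $F(X) \to F(U)$ est l'application nulle. Comme $X$ est int\`egre et lisse, le faisceau $(\Omega^i_{X/k})^{\otimes m}$ est localement libre, donc sans $\mathcal{O}_X$-torsion: une section globale qui s'annule sur l'ouvert dense $U$ est identiquement nulle. On en conclut $H^0(X,(\Omega^i_{X/k})^{\otimes m}) = 0$.

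L'obstacle principal r\'eside dans l'\'etape d'invariance par $\P^1$: une fois ce calcul de diff\'erentielles men\'e \`a bien, l'\'enonc\'e d\'ecoule formellement du formalisme de la proposition \ref{remgen} et d'une observation \'el\'ementaire sur les faisceaux localement libres. On notera que cet argument raffine la proposition \ref{tot} de Totaro, qui ne concerne que le cas $m=1$ et ne requiert que la $CH_0$-trivialit\'e de $X$, plus faible que la presque ${\rm R}$-trivialit\'e.
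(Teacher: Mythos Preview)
Your proposal is correct and follows essentially the same route as the paper's own proof: apply Proposition~\ref{remgen} to the functor $U \mapsto H^0(U,(\Omega^{i})^{\otimes m})$, verify the $\P^1$-invariance via the product decomposition of differentials together with $\Omega^1_{\P^1}\simeq \mathcal{O}_{\P^1}(-2)$, and conclude using the injectivity of restriction to a dense open in a smooth integral variety. You have simply spelled out the $\P^1$-invariance computation in more detail than the paper does.
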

\begin{proof}  Pour toute $k$-vari\'et\'e lisse int\`egre $U$, tout entier $i>0$, tout entier $m>0$,
la fl\`eche de restriction
$$ H^0(U,(\Omega^{i})^{\otimes m}) \to H^0(\P^1_{U},(\Omega^{i})^{\otimes m})$$
est un isomorphisme, comme on voit en utilisant la formule donnant le faisceau des
diff\'erentielles sur un produit de $k$-vari\'et\'es,  et en utilisant le fait que sur la droite projective, on a
$\Omega^1_{\P^1}=O_{\P^1}(-2)$, et donc, pour tout $m>0$ ,toute section de $(\Omega^1_{\P^1})^{\otimes m}$ est nulle.
On applique alors la proposition pr\'ec\'edente au foncteur $U \mapsto H^0(U,(\Omega^{i})^{\otimes m}) $,
et on utilise le fait que l'application de restriction $H^0(X,(\Omega^{i})^{\otimes m})  \to H^0(U,(\Omega^{i})^{\otimes m})$
est injective.
\end{proof}

\subsection{Composantes connexes r\'eelles}

\begin{theo}
Soit $\R$ le corps des r\'eels. Soit $X$ une $\R$-vari\'et\'e projective, lisse,
g\'eom\'etriquement connexe, de dimension $d$. Soit $s\geq 0$ le nombre
de composantes connexes de $X(\R)$.

(a) L'entier $s$ est un invariant birationnel stable.

(b) Si $X$ est r\'etractilement rationnelle, alors $X(\R)$ est connexe.

(c1) Pour  $s\geq 1$,  on a $CH_{0}(X)/2 = (\Z/2)^s$.

(c2) Si deux points de $X(\R)$ sont rationnellement \'equivalents sur  $X$, alors ils
appartiennent \`a la m\^eme composante connexe de $X(\R)$.

(d1) Si $s=0$, pour tout entier $m \geq d+1$, on a $H^{m}_{nr}(\R(X)/\R, \Z/2)=0$.

(d2) Si $s \geq 1$, pour tout  entier $m \geq d+1$, on a $H^{m}_{nr}(\R(X)/\R, \Z/2) =(\Z/2)^s$.

(e) Si $X$ est g\'eom\'etriquement rationnellement connexe,  deux points de $X(\R)$ sont
${\rm R}$-\'equivalents si et seulement si ils sont dans la m\^eme composante connexe.
\end{theo}

\begin{proof}
Pour (a), il suffit de voir que si $U \subset X$ est un ferm\'e de codimension au moins 2,
alors $U(\R) \subset X(\R)$ induit une bijection sur les composantes connexes.
On utilise alors le fait qu'une $k$-application rationnelle d'une $k$-vari\'et\'e lisse
dans une $k$-vari\'et\'e propre est d\'efinie en dehors d'un ferm\'e de codimension au moins 2.
Sous l'hypoth\`ese de   (b), il existe un ouvert de Zariski $U \subset X$
tel que l'image de $U(\R)$ dans $X(\R)$ soit form\'e de points
directement R-li\'es sur $X$, donc dans la m\^{e}me composante connexe de $X(\R)$.
Comme pour la $\R$-vari\'et\'e lisse $X$ tout point de $X(\R)$ est limite de points de $U(\R)$,
ceci suffit \`a conclure que $X(\R)$ est connexe.
Pour (c), voir  CT-Ischebeck \cite{CTI}. Pour (d), voir  CT-Parimala \cite{ctpari}.
L'\'enonc\'e (e) fut \'etabli par Koll\'ar.
\end{proof}

En dimension $d=1$, tous ces \'enonc\'es remontent \`a Witt. 
C'est B. Segre \cite{segre} qui le premier remarqua que les surfaces cubiques lisses $X$ sur $\R$,
qui sont toutes $\R$-unirationnelles, ne sont pas $\R$-rationnelles si $X(\R)$ n'est pas 
connexe.

\section{Surfaces g\'eom\'etriquement rationnelles}

\begin{theo} (Enriques, Manin, Iskovskikh, Mori)
Soient $k$ un corps et $X$ une $k$-surface projective, lisse, g\'eom\'etriquement rationnelle.
Alors $X$ est $k$-birationnelle \`a une telle $k$-surface de l'un des deux types suivants :

(i) Surface de del Pezzo de degr\'e $d$, avec $1 \leq d \leq 9$.

(ii) Surface $X$ munie d'une fibration relativement minimale $X \to D$, o\`u $D$ est une conique
lisse, la fibre g\'en\'erique est une conique lisse, et toutes les fibres sont des coniques avec
au plus un point singulier.
\end{theo}

Rappelons que les surfaces de del Pezzo de degr\'e 3 sont les surfaces cubiques lisses.

C'est une question ouverte depuis longtemps si une  $k$-surface comme dans le th\'eor\`eme,
 d\`es qu'elle poss\`ede 
un $k$-point, est $k$-unirationnelle.  C'est connu pour les surfaces cubiques. 
Une r\'eponse affirmative impliquerait que  les vari\'et\'es  complexes de dimension 3 fibr\'ees en coniques sur le plan $\P^2_{\C}$
 sont unirationnelles, ce qui est une question ouverte encore plus connue.

Une question g\'en\'erale (Sansuc et l'auteur) sur les surfaces du type ci-dessus est : dans quelle mesure le module galoisien $\Pic(X^s)$ 
(qui est un groupe ab\'elien de type fini) et les objets
qui lui sont attach\'es contr\^olent la g\'eom\'etrie et l'arithm\'etique de $X$ ?

A-t-on la r\'eciproque du th\'eor\`eme \ref{picpermut} (c) :

  Question 1 CT-Sansuc 1977)  :  {\it Si $X(k) \neq \emptyset$ et  le module galoisien $\Pic(X^s)$  est un facteur direct d'un module de permutation,
  la $k$-vari\'et\'e $X$ est-elle facteur direct birationnel d'un espace projectif $\P^n_{k}$ ?}

La $K$-th\'eorie alg\'ebrique (id\'ees de S. Bloch, th\'eor\`eme de Merkurjev-Suslin) a permis d'\'etablir pour ces
surfaces, sans analyse cas par cas, la r\'eciproque du Th\'eor\`eme \ref{picpermut} (b).

\begin{theo} \cite{ctk2}
Soit $X$ une $k$-surface projective, lisse, g\'eom\'etriquement rationnelle, poss\'edant un z\'ero-cycle
de degr\'e 1. Si le module galoisien $\Pic(X^s)$ est un facteur direct d'un module de permutation,
alors $X$ est $CH_{0}$-triviale.
\end{theo}

\medskip

Voici quelques rappels de CT-Sansuc  \cite{CTSa87a}.
Soit $X$ une $k$-surface projective, lisse, g\'eom\'etriquement rationnelle. Soit 
$\Pic(X^s)$ le module galoisien d\'efini par le groupe de Picard. C'est le groupe des
caract\`eres d'un $k$-tore $S$. Pour tout $k$-tore $T$, on a une suite exacte de groupes ab\'eliens
$$ 0 \to H^1(k,T) \to H^1(X,T) \to Hom_{g}(\hat{T},\hat{S}) \to H^2(k,T) \to H^2(X,T),$$
o\`u la cohomologie est la cohomologie \'etale.
Si $X$ poss\`ede un $k$-point, la fl\`eche $H^2(k,T) \to H^2(X,T)$ a une r\'etraction, donc on a
une suite exacte
$$ 0 \to H^1(k,T) \to H^1(X,T) \to Hom_{g}(\hat{T},\hat{S}) \to 0.$$
On appelle torseur universel sur $X$ un torseur ${\mathcal{T}} \to X$ sous le $k$-tore $S$ dont la classe dans
$ H^1(X,S)$ a pour image l'identit\'e dans  $Hom_{g}(\hat{S},\hat{S}) $. Si $X$ poss\`ede un 
$k$-point $P \in X(k)$, il existe un torseur universel, et on peut le fixer
(\`a automorphisme de $S$-torseur pr\`es) en demandant que sa fibre en $P$ soit
triviale, ce qui \'equivaut au fait qu'il existe un $k$-point  de ${\mathcal{T}} $ d'image $P$
dans $X$.

Un torseur universel   ${\mathcal{T}}$ sur une $k$-surface projective, lisse, g\'eom\'etriquement rationnelle
est une $k$-vari\'et\'e g\'eom\'etriquement rationnelle (ouverte) de dimension $2 + rg(\Pic(X^s))$.

Question  2 (CT-Sansuc 1977). {\it Sur une $k$-surface projective et lisse g\'eom\'etri\-quement rationnelle $X$,
les torseurs universels $\mathcal{T}$ avec un $k$-point sont-ils $k$-rationnels ?}

Ceci a \'et\'e \'etabli pour les surfaces fibr\'ees en coniques au-dessus de $\P^1_{k}$
avec au plus 4 fibres g\'eom\'etriques non lisses. C'est d'ailleurs ce qui a men\'e aux exemples
de vari\'et\'es stablement rationnelles non rationnelles \cite{BCTSSD}.

En 1977,  Sansuc et moi avions \'etabli que si $Y_{c}$ est une compactification lisse
d'un $k$-torseur universel  $\mathcal{T}$  alors
 $\Pic({\overline{Y}}_{c})$ est un $g$-module de permutation,
 et  $\Br(Y_{c})/\Br(k)=0$.
 
 Pour tester l'\'eventuelle non rationalit\'e des torseurs universels sur
 les surfaces g\'eom\'etriquement rationnelles, on peut essayer de calculer
 les invariants cohomologiques sup\'erieurs $H^{i}_{nr}(k(\mathcal{T})/k, \Q/\Z(i-1))$.
 Dans sa  th\`ese, Yang CAO a \'etabli le th\'eor\`eme suivant, qui s'applique
 en particulier aux  $k$-surfaces cubiques lisses.
 \begin{theo}\cite{Cao}
 Soit  $X$ une surface projective, lisse connexe, g\'eom\'e\-tri\-que\-ment rationnelle
  sur un corps $k$. Si $X$ n'est pas $k$-birationnelle \`a une surface de del Pezzo
  $k$-minimale de degr\'e 1, et si   $\mathcal{T}$ est
 un torseur universel sur $X$ avec un $k$-point,
 $H^{3}_{nr}(k(\mathcal{T})/k, \Q/\Z(2))/H^3(k,\Q/\Z(2))$
 est un groupe de torsion $2$-primaire.
  \end{theo}

Soit $k$ un corps de car. diff\'erente de 2 poss\'edant une extension finie  $L=k[t]/P(t)$ de degr\'e 3,
de cl\^{o}ture galoisienne $K/k$ de groupe $S_{3}$, et soit $k(\sqrt{a})$ l'extension discriminant.
Dans \cite{BCTSSD}, on a montr\'e que la surface g\'eom\'etriquement rationnelle d'\'equation affine
$y^2-az^2=P(x)$ est stablement $k$-rationnelle mais non $k$-rationnelle.
Ceci fut utilis\'e dans \cite{BCTSSD}  pour donner des exemples de vari\'et\'es de dimension 3 sur $\C$
qui sont stablement rationnelles mais non rationnelles.

Hassett avait soulev\'e la question si de tels exemples existent sur un corps $k$ parfait
dont la cl\^oture alg\'ebrique est procyclique, par exemple sur un corps fini.

 Le th\'eor\`eme suivant, qu'on confrontera avec la question 1 ci-dessus, n'admet pour l'instant qu'une d\'emonstration
 extr\^emement calculatoire, passant par l'analyse (faite par plusieurs auteurs) de toutes les actions possibles
 du groupe de Galois absolu sur le groupe de Picard g\'eom\'etrique des surfaces
 de del Pezzo de degr\'e 3, 2, 1, ce qui implique des groupes de type $E_{6}, E_{7}, E_{8}$.

\begin{theo} \cite{ctcyclique}
Soient $k$ un corps et $X$ une $k$-surface projective, lisse, g\'eom\'etriquement rationnelle.
Supposons que $X$ poss\`ede un point $k$-rationnel et que
$X$ soit d\'eploy\'ee par une extension cyclique de $k$.
Si  $X$ n'est pas  $k$-rationnelle, alors  il existe une extension finie 
s\'eparable $k'/k$ telle que $\Br(X_{k'})/\Br(k') = H^1(k', \Pic(X^s))\neq 0$, et alors
$X$ n'est pas stablement $k$-rationnelle.
\end{theo}

Soit $X$  d\'eploy\'ee par une extension cyclique de $k$.
Si l'on suppose $\Br(X_{k'})/\Br(k')= H^1(k', \Pic(X^s))=0$ pour toute extension s\'eparable $k'$ de $k$,
on peut montrer (Endo-Miyata) que $\Pic(X^s)$ est un facteur direct d'un module de permutation.
Tout torseur universel $\mathcal{T}$ avec un $k$-point est alors 
$k$-birationnel \`a $X \times_{k} S$. Si de tels torseurs universels
\'etaient automatiquement $k$-rationnels (question 2 ci-dessus), l'hypoth\`ese $\Br(X_{k'})/\Br(k')=0$
pour tout $k'/k$ fini
  impliquerait que $X$ est facteur direct d'une $k$-vari\'et\'e $k$-rationnelle.

\section{Hypersurfaces cubiques}

\subsection{Rationalit\'e, unirationalit\'e, $CH_{0}$-trivialit\'e}

Soit $X \subset \P^n_{k}$ avec $n \geq 3$ une hypersurface cubique lisse avec $X(k)\neq \emptyset$.
On sait que $X$ est $k$-unirationnelle (..., B. Segre, ..., J. Koll\'ar).
Si $X$ contient une $k$-droite, alors $X$ est $k$-unirationnelle de degr\'e 2.
Je renvoie \`a \cite{ACTP} pour plus de rappels et des r\'ef\'erences \`a la litt\'erature.

Pour $n=2m+1$ impair quelconque, il existe des hypersurfaces cubiques lisses $X \subset \P^n_{k}$
qui sont $k$-rationnelles. C'est le cas de celles qui contiennent un ensemble globalement
$k$-rationnel d'espaces lin\'eaires $\Pi_{1}$, $\Pi_{2}$, chacun d\'efini sur une extension au plus
quadratique s\'eparable de $k$, et sans point commun.
Il en est ainsi de l'hypersurface cubique de Fermat $X_{n}$, d\'efinie par l'\'equation.
$$\sum_{i=0}^n x_{i}^{3}=0.$$
Elle poss\`ede une paire globalement $k$-rationnelle de sous-espaces lin\'eaires de dimension $m$
 gauches l'un \`a l'autre, \`a savoir
 $$x_{0}+jx_{1}= x_{2}+jx_{3}= \dots = x_{2m}+j x_{2m+1}=0$$
 et son conjugu\'e ($j$ est une racine primitive cubique de $1$).

Pour tout entier $n\geq 3$, l'hypersurface cubique de Fermat $X_{n}$ contient
  une $k$-droite, et donc est $k$-unirationnelle de degr\'e 2.

\medskip

Pour simplifier, supposons dans la suite de ce paragraphe $k=\C$,
et consid\'erons des hypersurfaces cubiques lisses $X \subset \P^n_{\C}$, $n\geq 3$.

Toute hypersurface cubique  $X \subset \P^n_{\C}$, $n \geq 3$
contient une droite, et est donc  est unirationelle de degr\'e 2.

Si  une hypersurface cubique est aussi unirationelle de degr\'e impair,
alors elle est $CH_{0}$-triviale et tous les invariants de type cohomologie non ramifi\'ee
sont universellement triviaux. On ne sait pas si $X$ est alors r\'etractilement rationnelle.

\medskip

Un th\'eor\`eme fameux de Clemens et Griffiths dit qu'aucune $X$ dans $\P^4_{\C}$
n'est rationnelle. Pour $n=2m$ pair quelconque on ne conna\^{i}t aucune 
$X$ dans $\P^{2m}_{\C}$ qui soit rationnelle, ou m\^{e}me r\'etractilement rationnelle.
Mais par ailleurs on n'en conna\^{\i}t aucune dont on sache qu'elle n'est pas
r\'etractilement rationnelle.
 .

C. Voisin a montr\'e que sur une union d\'enombrable de ferm\'es de codimension 3
de leur espace de modules, les hypersurfaces cubiques $X \subset \P^4_{\C}$ correspondantes 
sont $CH_{0}$-triviales.

\medskip

 Hassett et Tschinkel ont d\'ecrit des classes d'hypersurfaces cubiques de $\P^5_{\C}$
 qui sont unirationelles de degr\'e impair.   
Dans \cite{ctJAG} on en donne     dans $\P^n_{\C}$ pour
 tout $n$ de la forme $6m-1, 6m+1, 6m+3$. Elles sont ``presque diagonales''
 mais plus g\'en\'erales que l'hypersurface de Fermat.

 Hassett, et d'autres, ont d\'ecrit des sous-vari\'et\'es de l'espace de modules des hypersurfaces cubiques
 de $\P^5_{\C}$ dont les hypersurfaces correspondantes sont rationnelles (outre celles contenant
 deux plans gauches congugu\'es). Ces sous-vari\'et\'es sont contenues dans  une union d\'enombrable
 de diviseurs ``sp\'eciaux'' de l'espace de modules.
 
  C. Voisin \cite{voisinCub} a montr\'e que sur beaucoup  de ces diviseurs sp\'eciaux, les  hypersurfaces cubiques
 correspondantes  de $\P^5_{\C}$ sont $CH_{0}$-triviales.
 
Soit $X$ une $k$-vari\'et\'e projective, lisse, connexe. Suivant \cite{voisinCub}, on d\'efinit la $CH_{0}$-dimension essentielle $\delta(X)$
 de $X$ comme la plus petite dimension d'une $\C$-vari\'et\'e $Y$ projective, lisse, connexe munie d'un morphisme $Y  \to X$
 tel que pout tout corps $F$ contenant $\C$, l'application induite $CH_{0}(Y_{F}) \to CH_{0}(X_{F})$ soit surjective.
 
  C. Voisin \cite{voisinCub}  a montr\'e que pour les hypersurfaces cubiques lisses $X \subset \P^n_{\C} $
{\it tr\`es g\'en\'erales} avec $n=5$ ou $n \geq 4 $ pair, 
si $\delta(X) < {\rm dim}(X)$, alors $\delta(X)=0$, i.e. $X$ est $CH_{0}$-triviale.

\subsection{Hypersurfaces cubiques  presque diagonales}

Dans \cite{ctJAG} je donne en toute dimension des classes   explicites d'hypersurfaces cubiques lisses
complexes qui sont $CH_{0}$-triviales.   
Certains des r\'esultats valent sur un corps non alg\'ebriquement clos, comme on va le voir.
 Voici une variation sur la proposition 3.5 de l'article \cite{ctJAG}.

\bpr \label{H1nul}
Soient $k$ un corps  et  $X$ une $k$-vari\'et\'e projective et lisse telle que $H^1(X,O_{X})=0$, poss\'edant un $k$-point.
S'il existe une courbe $\Gamma/k$ projective,  lisse, connexe, avec un $k$-point,  et un $k$-morphisme $\Gamma \to X$
tels que, pour tout corps $F$, l'application induite
$CH_{0}(\Gamma_{F}) \to CH_{0}(X_{F})$ soit surjective, alors, pour tout corps $F$, l'application
 ${\rm deg}_{F} :CH_{0}(X_{F}) \to \Z$ est un isomorphisme, en d'autres termes la $k$-vari\'et\'e $X$ est 
$CH_{0}$-triviale.
\epr

 \begin{proof}  
 Soit $J$ la jacobienne de $\Gamma$. Pour tout corps $F$, on a
  $A_{0}(\Gamma_{F})=J(F)$.
Notons $K=k(X)$ le corps des fonctions de $X$.
L'hypoth\`ese $H^1(X,O_{X})=0$
implique que
 la vari\'et\'e d'Albanese de 
$X$ est triviale.
Un point de $J(k(X))$ d\'efinit une $k$-application rationnelle
de $X$ dans $J$,  donc un $k$-morphisme de $X$ dans $J$ car une application rationnelle d'une vari\'et\'e lisse dans une vari\'et\'e ab\'elienne est partout d\'efinie.
Mais comme la vari\'et\'e d'Albanese de $X$ est triviale, tout tel morphisme est constant. On a donc $J(k)=J(k(X))$.
Ainsi l'image de $A_{0}(\Gamma_{K})$ dans $A_{0}(X_{K})$
est dans l'image de l'application compos\'ee
$$J(k)= A_{0}(\Gamma) \to A_{0}(X)  \to A_{0}(X_{K}).$$
Par hypoth\`ese, l'application  $A_{0}(\Gamma_{K}) \to A_{0}(X_{K})$ est surjective. Ainsi  la restriction $CH_{0}(X) \to CH_{0}(X_{K})$ est surjective, et
en particulier la classe du point g\'en\'erique $\eta $ de $X$, 
qui d\'efinit un point de $X(K)$, a une classe dans $CH_{0}(X_{K})$ qui est dans l'image de $CH_{0}(X)$. D'apr\`es la proposition \ref{merku} (Merkurjev), ceci assure que la $k$-vari\'et\'e $X$ est  
$CH_{0}$-triviale.
 \end{proof}
 
 \begin{rema}
 Soit $k=\C$. 
  L'\'enonc\'e ci-dessus implique que si $CH_{0}(X)=\Z$,  alors $\delta(X) \leq 1$ implique $\delta(X)=0$.
R. Mboro \cite{mboro}  a \'etabli l'\'enonc\'e suivant. 
Supposons $CH_{0}(X)=\Z$, $H^2_{Betti}(X,\Z)_{tors}=0$ et $H^3_{Betti}(X,\Z)=0$.
Alors $\delta(X) \leq 2$ implique $\delta(X)=0$.
 \end{rema}

 \begin{theo}
 Soit $k$ un corps infini, de caract\'eristique diff\'erente de 3. Soit $f(x,y,z) \in k[x,y,z]$, resp. $g(u,v) \in k[u,v]$ des
 formes cubiques non singuli\`eres.  Soit $X \subset \P^4_{k}$  l'hypersurface cubique lisse d'\'equation $$f(x,y,z)-g(u,v)=0.$$
 
 Faisons les hypoth\`eses suivantes, qui sont   satisfaites si $k$ est un corps alg\'ebriquement clos :
 
 (a)   Il existe $a\in k^*$ tel que la surface cubique $S$ de $\P^3_{k}$ d'\'equation 
 $f(x,y,z)-at^3=0$ soit une surface $k$-rationnelle et que la courbe $\Gamma$ de $\P^2_{k}$ d'\'equation $g(u,v)-at^3=0$ poss\`ede
 un $k$-point.
 
 (b) L'hypersurface $X$ contient une $k$-droite.
 
\noindent  Alors l'hypersurface $X$ est $CH_{0}$-triviale. 
  \end{theo}

\begin{proof}  On note $A_{0}(X) \subset CH_{0}(X)$ le sous-groupe des classes de z\'ero-cycles de degr\'e z\'ero.
L'hypoth\`ese (b) implique que $X$ est $k$-unirationnelle de degr\'e 2, ce qui implique
$2 A_{0}(X_{F})=0$ pour tout corps $F$ contenant $k$. 
On a une application rationnelle dominante, de degr\'e 3, de $S \times \Gamma$ vers $X$,
qui envoie le produit des  
  vari\'et\'es affines $f(x,y,z)-a=0$ et $g(u,v)-a=0$ vers le point de coordonn\'ees homog\`enes
   $(x,y,z,u,v) \in X \subset \P^4_{k}$     (on suppose ici $k$ infini).
    En utilisant le fait que $S$ est $k$-rationnelle, on 
   montre que pour tout corps $F$ contenant $k$, il existe un
   $k$-morphisme $f: \Gamma \to X$ tel que pour tout corps $F$ contenant $k$,
   on ait $3 CH_{0}(X_{F}) \subset f_{*}(CH_{0}(\Gamma_{F})$.
Comme on a $2 A_{0}(X_{F})=0$, on en d\'eduit $CH_{0}(X_{F}) \subset f_{*}(CH_{0}(\Gamma_{F}))$.
La proposition \ref{H1nul} donne alors que $X$ est $CH_{0}$-triviale. 
\end{proof}

 L'\'enonc\'e ci-dessus se g\'en\'eralise en dimension sup\'erieure \cite[Prop. 3.7]{ctJAG}.
 Les arguments de \cite[Prop. 3.7 (i)]{ctJAG} et la proposition ci-dessus permettent 
  d'\'etablir que, sur tout corps $k$ de  diff\'erente de 3,
pour tout entier $n \geq 3$, impair ou non, l'hypersurface cubique de Fermat $X \subset \P^n_{\Q}$, $n \geq 3$, est $CH_{0}$-triviale.   
Pour tout $n=2m \geq 4$, c'est ainsi une question ouverte si cette  hypersurface est r\'etractilement rationnelle, 
ou m\^{e}me stablement rationnelle sur le corps $\Q$.  

\medskip

Sur le corps $k=\C$, la m\'ethode ci-dessus 
et des \'enonc\'es d'unirationalit\'e plus ou moins classiques permettent d'\'etablir 
l'\'enonc\'e g\'en\'eral suivant.

\begin{theo}\label{Hauptsatz2} \cite[Thm. 3.8]{ctJAG}
Toute hypersurface cubique lisse $X \subset \P^n_{\C}$ de dimension au moins 2
 dont l'\'equation
est donn\'ee par une forme   $\sum_{i}  
\Phi_{i}$, o\`u les $\Phi_{i}$ sont \`a variables
 s\'epar\'ees et chacune  a au plus 3 variables,
est   $CH_{0}$-triviale.
\end{theo}

\section{Sp\'ecialisation}\label{specialR}

\subsection{Sp\'ecialisation de la ${\rm R}$-\'equivalence et de l'\'equivalence rationnelle sur les z\'ero-cycles}

L'\'enonc\'e suivant est ``bien connu''. Pour une d\'emonstration 
d\'etaill\'ee  pour $\X/R$ projectif, on consultera la note de D. Madore \cite{madore}.
Voir aussi \cite{KS}.

\begin{theo}\label{specialisationR}
Soit $R$ un anneau de valuation discr\`ete excellent, $K$ son corps des fractions, $k$ son
corps r\'esiduel. Soit $\X$ un $R$-sch\'ema propre, $X=\X\times_{R}K$ la fibre g\'en\'erique
et $Y=\X\times_{R}k$ la fibre sp\'eciale.
L'application de r\'eduction  $X(K) = \X(R)  \to Y(k)$ induit une application
$X(K)/{\rm R} \to Y(k)/{\rm R}$.
\end{theo}
\begin{proof}  (Esquisse) Soit $\P^1_{K} \to X$ un $k$-morphisme.
Il s'\'etend en une application rationnelle de $\P^1_{R} $ vers $ \X$.
Par \'eclatements successifs de points ferm\'es sur $\P^1_{R}$,
 on obtient $Z \to \P^1_{R}$
et un $R$-morphisme $Z \to \X$ \'etendant l'application rationnelle.
La fibre $Z_{k}$ est g\'eom\'etriquement un arbre, dont les composantes sont des
droites projectives. Comme on voit par r\'ecurrence sur le nombre d'\'eclatements,
la r\'eunion $T$ des composantes de $Z_{k}$ obtenues par \'eclatement de $k$-points
forme elle-m\^eme un arbre form\'e de droites projectives $\P^1_{k}$,
 dont les intersections deux \`a deux sont
\'egales \`a un unique $k$-point, et tout $k$-point de $Z_{k}$
est contenu dans $T$. 
Les points $0$ et $\infty$ de $\P^1(K)=Z(K)$ s'\'etendent en des
sections $s_{0}$ et $s_{\infty}$ de $Z \to \Spec(R)$. Les sp\'ecialisations de
ces sections au-dessus de $\Spec(k)$ sont des $k$-points de $Z_{k}$,
qui sont dans le sous-arbre $T$.  Les images de $0_{K}$ et $\infty_{K}$
dans $Y(k)$ sont donc des points ${\rm R}$-\'equivalents sur $Y$.
\end{proof}

Soit $R$ un anneau de valuation discr\`ete excellent.
Soit $\X$ un $R$-sch\'ema projectif et plat, $\X_{K}$ la fibre g\'en\'erique et $\X_{k}$
la fibre sp\'eciale.

\'Etant donn\'e un point ferm\'e $P \in \X_{K}$, notons $\tilde{P}$ son adh\'erence
dans $\X$. C'est un $R$-sch\'ema fini.
On a une immersion ferm\'ee $\tilde{P}\times_{R} \Spec(k) \hookrightarrow \X_{k}$.
On associe \`a ce $R$-sch\'ema fini une combinaison lin\'eaire \`a coefficients entiers
de points ferm\'es de $\X_{k}$. Les coefficients sont d\'efinis par les longueurs \'evidentes.
Le z\'ero-cycle obtenu sur $\X_{k}$ peut aussi \^etre vu comme le z\'ero-cycle
associ\'e au $k$-sch\'ema  d\'ecoup\'e par $\pi=0$ sur  $\tilde{P}$.

Ceci d\'efinit une application lin\'eaire $Z_{0}(\X_{K}) \to Z_{0}(\X_{k})$.
On v\'erifie que ce processus est fonctoriel covariant en les morphismes (propres) de
 $R$-sch\'emas projectifs et plats.

Le th\'eor\`eme suivant est un cas particulier d'un th\'eor\`eme de Fulton pour  les groupes
de Chow de cycles de dimension quelconque.

\begin{theo} (Fulton)
Soit $R$ un anneau de valuation discr\`ete excellent, $K$ son corps des fractions, $k$ son
corps r\'esiduel, $\pi$ une uniformisante. Soit $\X$ un $R$-sch\'ema projectif et plat, 
$X=\X\times_{R}K$ la fibre g\'en\'erique
et $Y=\X\times_{R}k$ la fibre sp\'eciale.  
Il existe un  unique homomorphisme
de sp\'ecialisation 
$$CH_{0}(X) \to CH_{0}(Y)$$
qui  associe \`a la classe d'un point ferm\'e $P$ de $X$ d'adh\'erence $\tilde{P} \subset \X$
la classe du z\'ero-cycle associ\'e au diviseur de Cartier d\'ecoup\'e par $\pi=0$ sur  $\tilde{P}$.
\end{theo}

C'est \'enonc\'e au d\'ebut du \S 20.3 de \cite{fulton}, avec r\'ef\'erence au \S 6.2 et au th\'eor\`eme 6.3.
On part d'une suite exacte facile
$$ CH_{1}(Y) \to CH_{1}(\X/R) \to CH_{0}(X) \to 0$$
\'etablie au \S 1.8. 

On utilise ensuite un 
homomorphisme de Gysin $i^{!} : CH_{1}(\X/R) \to  CH_{0}(Y)$
  introduit au \S 6.2. Il est d\'emontr\'e au  Th\'eor\`eme 6.3 que le compos\'e
  $CH_{1}(Y) \to CH_{1}(\X/R) \to CH_{0}(Y)$ est nul, en utilisant le fait
  que $Y$ est un diviseur de Cartier principal sur $\X$.
  Ceci induit un homomorphisme de sp\'ecialisation $ CH_{0}(X) \to  CH_{0}(Y)$.
  
 Autant que je puisse voir, le \S 2, et la Proposition 2.6 de \cite{fulton}, qui utilisent
 un homomorphisme de Gysin $ i^{*} : CH_{1}(\X/R) \to  CH_{0}(Y)$,
  suffisent 
 pour \'etablir ces r\'esultats. Ils reposent sur un th\'eor\`eme fondamental, le Th\'eor\`eme 2.4. 
La D\'efinition 2.3 of \cite{fulton} donne pr\'ecis\'ement la description de l'homomorphisme
de sp\'ecialisation donn\'ee dans l'\'enonc\'e ci-dessus.

\medskip

\begin{rema}
On peut facilement ramener la d\'emonstration de l'\'enonc\'e ci-desssus
 au cas o\`u  $\X$ est une $R$-courbe plate, projective, connexe, r\'eguli\`ere.
Mais ce cas-l\`a ne semble pas plus facile que le cas g\'en\'eral, si la $R$-courbe
n'est pas lisse.
Or c'est tout le point : si la fibre sp\'eciale est une union de diviseurs lisses $Y_{i}/k$ (non
principaux),
on n'a pas en g\'en\'eral de fl\`eches $CH_{0}(X) \to CH_{0}(Y_{i})$ qui par somme donneraient
la fl\`eche $CH_{0}(X) \to CH_{0}(Y)$. Par ailleurs, si $Y/k$ n'est pas lisse, la fl\`eche naturelle
$\Pic(Y) \to CH_{0}(Y)$ n'est a priori ni injective ni surjective.
\end{rema}
 
\subsection{Non rationalit\'e  stable par sp\'ecialisation singuli\`ere}

 Les deux th\'eor\`emes suivants, qui g\'en\'eralisent un argument de C. Voisin \cite{voisinInv},
  sont \'etablis 
 dans \cite{CTP16} en utilisant la sp\'ecialisation de Fulton des groupes de Chow (des z\'ero-cycles).
 Ils ont d\'ej\`a  \'et\'e discut\'es dans divers textes, en particulier dans  \cite{peyre} et  \cite{P}.
  On d\'eveloppe ici la remarque 1.19 de \cite{CTP16} :
on donne une d\'emonstration qui utilise la sp\'ecialisation de la ${\rm R}$-\'equivalence,
 plus simple \`a \'etablir que celle du groupe de Chow des z\'ero-cycles.

\begin{theo}  \cite[Thm. 12]{CTP16}.
Soit $A$ un anneau de valuation discr\`ete, $K$ son corps des fractions, suppos\'e de
caract\'eristique z\'ero, $k$ son
corps r\'esiduel.
 Soit $\X$ un $A$-sch\'ema projectif et plat, 
$X=\X\times_{A}K$ la fibre g\'en\'erique
et $Y=\X\times_{A}k$ la fibre sp\'eciale. Supposons 
$X/K$ lisse et  g\'eom\'etriquement int\`egre et
$Y/k$ g\'eom\'etriquement int\`egre.  
Supposons  $Y(k)$ Zariski dense dans $Y$ et
 qu'il existe une r\'esolution des singularit\'es projective 
$f : Z \to Y$ qui est   un $CH_{0}$-isomorphisme.
Si la $K$-vari\'et\'e $X$ est r\'etractilement rationnelle, 
alors 
la $k$-vari\'et\'e $Z$ est  $CH_{0}$-triviale.
\end{theo}
\begin{proof} 
On proc\`ede au d\'ebut comme dans \cite[Thm. 12]{CTP16}.
On note $B$ le compl\'et\'e de l'anneau local de $\X$ au point g\'en\'erique  $\eta$ de $Y$.
Soit $F$ son corps des fractions.
La fl\`eche $A \to B$ est un homomorphisme local, induisant $k \to k(Y)$ sur les corps
r\'esiduels. 
On consid\`ere le $B$-sch\'ema $\X\times_{A}B$. Sa fibre sp\'eciale
est $Y\times_{k} k(Y)$, qui admet la d\'esingularisation $Z\times_{k}k(Y) \to Y\times_{k} k(Y)$.
Le $k(Y)$-morphisme $Z\times_{k}k(Y) \to Y\times_{k} k(Y)$ est  
$CH_{0}$-trivial. Soit $U \subset Y_{lisse}$ un ouvert tel que $f^{-1}(U) \to U$ soit un isomorphisme.
Soit $P \in U(k)$. Soit $M \in Z(k)$ son image r\'eciproque sur $f^{-1}(U)$.
Par Hensel, le point g\'en\'erique $\eta \in Y(k(Y))$ et le point $P_{k(Y)}$ se rel\`event en des 
$F$-points de $\X\times_{K}F$.  Comme $X$ est  lisse et r\'etractilement rationnel sur le corps
$K$ qui est de caract\'eristique z\'ero, ces deux points  
sont ${\rm R}$-\'equivalents sur
 $\X\times_{K}F$. Par sp\'ecialisation de la ${\rm R}$-\'equivalence,  les points $\eta$ et $P_{k(Y)}$
 sont ${\rm R}$-\'equivalents sur $Y_{k(Y)}$. Ils sont donc rationnellement \'equivalents sur 
 $Y_{k(Y)}$. Soit $\xi$ le point g\'en\'erique de $Z$ d'image $\eta \in Y$.
 L'hypoth\`ese que $f$ est un $CH_{0}$-isomorphisme implique 
 que  
   $\xi_{k(Z)}$ est rationnellement \'equivalent \`a $M_{k(Z)}$ sur $Z_{k(Z)}$.
 Ceci implique que la $k$-vari\'et\'e projective et lisse $Z$ est 
 $CH_{0}$-triviale (\cite[Lemma 1.3]{ACTP}, \cite[Prop. 1.4]{CTP16}).
\end{proof}

\begin{prop}\cite[Prop. 1.8]{CTP16}
Soit $f :Z \to Y$ une r\'esolution des singularit\'es.
Pour \'etablir  que  sur tout corps $F$ contenant $k$, l'homomorphisme
$f_{*} : CH_{0}(Z_{F}) \to CH_{0}(Y_{F})$ est un isomorphisme, il suffit
de montrer que pour tout point $M$ du sch\'ema $Y$,
le $k(M)$-sch\'ema fibre $Z_{M}$ est $CH_{0}$-trivial. $\Box$
 \end{prop}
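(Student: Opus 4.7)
\emph{Surjectivit\'e.} Le groupe $CH_{0}(Y_{F})$ est engendr\'e par les classes $[P]$ de ses points ferm\'es. Soit $P$ un tel point, d'image $M \in Y$ et de corps r\'esiduel $L = k(P)$. La fibre sch\'ematique $(f_{F})^{-1}(P)$ s'identifie \`a $Z_{M}\times_{k(M)}L$. L'hypoth\`ese de $CH_{0}$-trivialit\'e universelle de $Z_{M}/k(M)$ se transmet par changement de base : cette fibre est $CH_{0}$-triviale sur $L$ et poss\`ede en particulier un z\'ero-cycle de degr\'e $1$, dont l'image directe par $f_{F}$ vaut $[P]$. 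D'o\`u la surjectivit\'e de $f_{F,*}$.

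\emph{Injectivit\'e.} Je proc\'ederais par r\'ecurrence n\oe{}th\'erienne sur $Y$, en utilisant que l'hypoth\`ese est h\'er\'edit\'ee par passage \`a tout sous-sch\'ema ferm\'e $W \subseteq Y$, les fibres de $f|_{f^{-1}(W)}$ en $M \in W$ \'etant toujours les $Z_{M}$. Comme $f$ est une r\'esolution des singularit\'es, il existe un ouvert dense $U \subseteq Y$ o\`u $f$ est un isomorphisme ; posant $W := Y \setminus U$, l'hypoth\`ese de r\'ecurrence fournit un isomorphisme $CH_{0}((f^{-1}W)_{F}) \oi CH_{0}(W_{F})$. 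On comparerait alors les deux suites exactes de localisation
\[
\begin{array}{ccccccc}
CH_{0}((f^{-1}W)_{F}) & \to & CH_{0}(Z_{F}) & \to & CH_{0}((f^{-1}U)_{F}) & \to & 0 \\
\downarrow \wr & & \downarrow f_{F,*} & & \downarrow \wr & & \\
CH_{0}(W_{F}) & \to & CH_{0}(Y_{F}) & \to & CH_{0}(U_{F}) & \to & 0
\end{array}
\]
o\`u les deux fl\`eches verticales extr\^emes sont des isomorphismes et la surjectivit\'e de la fl\`eche centrale est d\'ej\`a acquise.

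\emph{Obstacle principal.} La simple chasse au diagramme ne suffit pas \`a \'etablir l'injectivit\'e, les lignes n'\'etant pas exactes \`a gauche : un $z \in \ker f_{F,*}$ peut provenir, par exactitude au milieu, d'un $z' \in CH_{0}((f^{-1}W)_{F})$ dont l'image dans $CH_{0}(W_{F})$ tombe dans le noyau (a priori non trivial) de $j_{*}: CH_{0}(W_{F}) \to CH_{0}(Y_{F})$. Pour lever cet obstacle, je construirais une section canonique $s : Z_{0}(Y_{F}) \to CH_{0}(Z_{F})$ envoyant $[P]$ sur l'unique classe de degr\'e $1$ dans $CH_{0}((f_{F})^{-1}(P))$, fournie par la $CH_{0}$-trivialit\'e universelle de la fibre (via la Proposition \ref{merku}). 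Sa compatibilit\'e \`a l'\'equivalence rationnelle se v\'erifierait courbe par courbe, en appliquant la m\^eme analyse fibre par fibre au changement de base $Z\times_{Y}\tilde{C} \to \tilde{C}$ pour $\tilde{C}$ la normalis\'ee d'une courbe int\`egre $C \subset Y_{F}$ (cas o\`u $CH_{0}$ se calcule directement via $\Pic(\tilde C)$). Cette section fournirait alors l'inverse cherch\'e de $f_{F,*}$.
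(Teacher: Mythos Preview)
The paper gives no proof of this proposition; it merely cites \cite[Prop.~1.8]{CTP16} and closes with a $\Box$. Your plan is essentially the argument of that reference, repackaged as the construction of an explicit inverse $s$ rather than a direct verification that $\ker f_{F,*}=0$.

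Two remarks. First, the appeal to Proposition~\ref{merku} is misplaced: that statement concerns \emph{smooth} proper varieties, whereas the fibres $Z_M$ of a resolution are typically singular. The uniqueness of the degree-$1$ class you need is immediate from the very definition of $CH_0$-triviality (the degree map is an isomorphism), so no extra input is required. Second, the step you label ``courbe par courbe'' is where the entire content lies, and your allusion to $\Pic(\tilde C)$ does not quite capture it. What is actually done in \cite{CTP16} is this: for $p:\tilde C\to Y_F$ a normalised curve and $g\in F(\tilde C)^*$, set $W:=Z_F\times_{Y_F}\tilde C$. The generic fibre of $W\to\tilde C$ is $CH_0$-trivial (it is $Z_M\times_{k(M)}F(\tilde C)$ for $M$ the image in $Y$ of the generic point of $\tilde C$), hence carries a zero-cycle of degree~$1$; taking closures yields integral curves $D_j\subset W$ and integers $n_j$ with $\sum_j n_j\,[D_j:\tilde C]=1$. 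The cycle $\sum_j n_j\,\operatorname{div}(g\circ\pi_j)$ on the normalisations $\tilde D_j$ is then rationally trivial on $W$ and pushes forward to $\operatorname{div}(g)$ on $\tilde C$. The difference between this lift and your fibrewise-defined $s(p_*\operatorname{div}(g))$ is a sum, over closed points $Q\in\tilde C$, of degree-$0$ cycles in the fibre $W_Q$, each rationally trivial by the $CH_0$-triviality hypothesis on closed fibres. This is the missing ingredient in your sketch; once supplied, your section $s$ is well defined and is indeed the inverse of $f_{F,*}$.
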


\medskip

\begin{theo}  \cite[Thm. 14]{CTP16}.
Soit $A$ un anneau de valuation discr\`ete, $K$ son corps des fractions,
 $k$ son
corps r\'esiduel suppos\'e alg\'ebriquement clos.
Soit $\overline{K}$ une cl\^oture alg\'ebrique de $K$.
 Soit $\X$ un $A$-sch\'ema projectif et plat, 
$X=\X\times_{A}K$ la fibre g\'en\'erique
et $Y=\X\times_{A}k$ la fibre sp\'eciale. Supposons 
$X/K$ lisse et  g\'eom\'etriquement int\`egre et
$Y/k$ g\'eom\'etriquement int\`egre.  
Supposons   
 qu'il existe une r\'esolution des singularit\'es projective 
$f : Z \to Y$ qui est   un $CH_{0}$-isomorphisme.
Si la $\overline{K}$-vari\'et\'e $X\times_{K}\overline{K}$ est r\'etractilement rationnelle, 
alors 
la $k$-vari\'et\'e $Z$ est   $CH_{0}$-triviale.
\end{theo}

\begin{proof}
Comme dans \cite{CTP16}, ceci se d\'eduit du th\'eor\`eme pr\'ec\'edent par
une r\'eduction simple.
\end{proof}

Dans \cite[\S 2.4]{P},   A. Pirutka d\'eveloppe une autre variante
de la remarque 1.19 de \cite{CTP16}.

 \begin{theo} 
Soit $A$ un anneau de valuation discr\`ete, de corps des fractions $K$
et de corps r\'esiduel $k$.  
 Soit $\X$ un $A$-sch\'ema projectif et plat, 
$X=\X\times_{A}K$ la fibre g\'en\'erique
et $Y=\X\times_{A}k$ la fibre sp\'eciale. Supposons $X/K$  et
$Y/k$ g\'eom\'etriquement int\`egres, et $Y(k)$ Zariski dense dans $Y$.  
Supposons   
 qu'il existe une r\'esolution des singularit\'es projective 
$f : Z \to Y$ qui soit ${\rm R}$-triviale.
Si  $X$ est   r\'etractilement rationnelle, alors 
$Z$ est presque ${\rm R}$-triviale.
En particulier, il existe un point $M \in Z(k)$ tel que
 le point g\'en\'erique de $Z$ est, sur $Z_{k(Z)}$, ${\rm R}$-\'equivalent \`a
$M_{k(Z)}$. 
\end{theo}

 Cet \'enonc\'e implique  le suivant.

\begin{theo}\label{specRetdiff}
Soit $A$ un anneau de valuation discr\`ete, de corps des fractions $K$
et de corps r\'esiduel $k$ alg\'ebriquement clos. 
 Soit $\X$ un $A$-sch\'ema projectif et plat, 
$X=\X\times_{A}K$ la fibre g\'en\'erique
et $Y=\X\times_{A}k$ la fibre sp\'eciale. Supposons $X/K$  et
$Y/k$ g\'eom\'etriquement int\`egres.  
Supposons   
 qu'il existe une r\'esolution des singularit\'es projective 
$f : Z \to Y$ qui soit ${\rm R}$-triviale.
Si  $X$ est g\'eom\'etriquement  r\'etractilement rationnelle, alors 
$Z$ est presque ${\rm R}$-triviale.
En particulier :

(i) Il  existe un point $M \in Z(k)$ tel que
 le point g\'en\'erique de $Z$ est, sur $Z_{k(Z)}$, ${\rm R}$-\'equivalent \`a
$M_{k(Z)}$.

(ii) Pour tous entiers $i>0$ et $m>0$, on a
$$H^0(Z,(\Omega^{i})^{\otimes m})=0.$$
\end{theo}

\begin{proof}
Voir \cite[Thm. 2.14]{P}.
Pour la derni\`ere assertion, voir la proposition  \ref{Rtot} ci-dessus.
\end{proof}

 Pour \'etablir dans des cas concrets que la r\'esolution
 $f : Z \to Y$ est ${\rm R}$-triviale, on utilise l'\'enonc\'e suivant.

\begin{prop} Soit $f: Z  \to Y$ un $k$-morphisme propre.
 Si pour tout corps $F$ contenant $k$ et
  tout point $M \in Y(F)$, la $F$-vari\'et\'e fibre $Z_{M}$ 
est ${\rm R}$-triviale, alors
$f$ est ${\rm R}$-triviale. $\Box$
\end{prop}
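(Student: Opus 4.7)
The plan is to establish both surjectivity and injectivity of $Z_F(F)/{\rm R} \to Y_F(F)/{\rm R}$ for an arbitrary field extension $F/k$. Surjectivity is immediate from the hypothesis: given $y \in Y(F)$, applying R-trivialit\'e of $Z_y/F$ gives $Z_y(F) \neq \emptyset$, and any $z \in Z_y(F) \subset Z(F)$ satisfies $f(z)=y$.

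For injectivity, one reduces by chasing R-cha\^{\i}nes in $Y(F)$ to the following situation: $z_1, z_2 \in Z(F)$ with $f(z_1), f(z_2)$ \emph{\'el\'ementairement} R-li\'es via $h : U \to Y$, $U \subset \P^1_F$ ouvert, and $h(p_i)= f(z_i)$ for $p_1,p_2 \in U(F)$. One must exhibit an R-cha\^{\i}ne dans $Z(F)$ joining $z_1$ \`a $z_2$.

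The key construction is the fibre product $W := Z \times_Y U$ with its projection $\pi : W \to U$; since $f$ est propre, so is $\pi$. The fibre g\'en\'erique $W_\eta$ identifies with $Z_{h(\eta)}$ vue sur $F(U) = \kappa(\eta)$, and the hypothesis applied to the extension $F(U)/k$ and the point $h(\eta) \in Y(F(U))$ gives that $W_\eta$ est ${\rm R}$-triviale sur $F(U)$. In particular $W_\eta(F(U)) \neq \emptyset$, so $\pi$ admits a section rationnelle $s : U \dashrightarrow W$. The crucial step is to extend $s$ to a genuine morphisme $\sigma : U \to W$: because $U$ is a regular curve of dimension one and $\pi$ is propre, this follows from the crit\`ere valuatif de propret\'e applied point by point at the finitely many places where $s$ is a priori not defined.

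Setting $z_i' := \mathrm{pr}_Z(\sigma(p_i)) \in Z(F)$, one has $f(z_i') = h(p_i) = f(z_i)$, so $z_i$ and $z_i'$ both lie dans la fibre $Z_{f(z_i)}(F)$; by hypothesis this fibre is R-triviale, whence $z_i \sim_{\rm R} z_i'$ dans $Z$. The morphisme $\mathrm{pr}_Z \circ \sigma : U \to Z$ envoie $p_1 \mapsto z_1'$ et $p_2 \mapsto z_2'$, so $z_1'$ et $z_2'$ sont \'el\'ementairement R-li\'es dans $Z(F)$. The chain $z_1 \sim_{\rm R} z_1' \sim_{\rm R} z_2' \sim_{\rm R} z_2$ provides the required R-\'equivalence dans $Z$.

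The main obstacle is the extension of the section rationnelle to a genuine morphism across all of $U$; this relies essentially on the hypothesis that $f$ (and hence $\pi$) is propre. Without propret\'e one would only obtain $\sigma$ over a non-empty ouvert $U' \subset U$ not necessarily containing both $p_1$ and $p_2$, and the strategy would break down. Everything else is formal, using only the definitions and functoriality of the construction $W = Z \times_Y U$.
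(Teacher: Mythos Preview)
Your argument is correct and is precisely the natural one. The paper itself does not give a proof of this proposition: it is marked with a $\Box$ and the text immediately following states ``La d\'emonstration de cet \'enonc\'e est facile''. Your write-up spells out exactly the expected routine argument: lift a rational section of the base-changed family $W=Z\times_Y U \to U$ using the valuative criterion (properness of $f$ is used here, as you correctly emphasize), then use R-triviality of the closed fibres to connect the endpoints. The reduction to the elementarily R-linked case is harmless once surjectivity is known, since intermediate points of a chain in $Y(F)$ can be lifted to $Z(F)$.
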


La d\'emonstration de cet \'enonc\'e est facile, mais \'etablir
que l'hypoth\`ese sur les fibres $Z_{M}$ vaut est l'une des
principales difficult\'es dans la pratique.

\subsection{Applications aux  vari\'et\'es alg\'ebriques complexes}

Elles sont nombreuses. Certaines ont \'et\'e  d\'ecrites dans les  rapports
\cite{peyre}, \cite{P}. 

Pour des familles projectives et lisses $\X \to S$ de
 vari\'et\'es alg\'ebriques d'un ``type donn\'e'', param\'etr\'ees par
une vari\'et\'e alg\'ebrique complexe, on \'etablit des th\'eor\`emes
du type :

L'ensemble des points   $s \in S(\C)$ tels que la fibre $\X_{s}$ ne soit 
pas r\'etractilement rationnel est Zariski dense dans $S$.

On  montre en fait que l'ensemble des points $s$ o\`u $X_{s}$
est r\'etractilement rationnel est contenu dans
une union d\'enombrable de
ferm\'es propres de $S$.

On s'int\'eresse bien s\^{u}r \`a des vari\'et\'es projectives et lisses $X/\C$ qui sont ``proches d'\^etre rationnelles'',
en particulier qui sont rationnellement connexes (i.e. telles que $X(\C)/{\rm R}$ soit r\'eduit \`a un point).
C'est le cas  des vari\'et\'es de Fano.

\medskip
 
On a \'etudi\'e :

$\bullet$  les hypersurfaces lisses dans $\P^n_{\C}$ (de degr\'e $d\leq n$)

$\bullet$ les rev\^etements cycliques ramifi\'es de $\P^n_{\C}$ (avec des conditions sur
le degr\'e du rev\^etement et le degr\'e de l'hypersurface de ramification)

$\bullet$ des familles de quadriques de dimension relative $d$ au moins 1
au-dessus de $\P^n_{\C}$ 

$\bullet$ des familles de surfaces de del Pezzo, et plus g\'en\'eralement de vari\'et\'es de Fano,
au-dessus de $\P^n_{\C}$ 

\medskip

On proc\`ede par d\'eg\'en\'erescence de ces vari\'et\'es sur des
vari\'et\'es singuli\`eres $Y/k$,  avec $k$ \'eventuellement de caract\'eristique positive,
pour lesquelles on trouve une
 r\'esolution des singularit\'es $Z \to Y$ qui soit un morphisme $CH_{0}$-trivial,
et l'on montre que $Z$ n'est pas $CH_{0}$-triviale, ou
que $Z$ n'est pas presque ${\rm R}$-triviale 
en utilisant  le groupe de Brauer ou la cohomologie non ramifi\'ee
ou bien,  si le corps r\'esiduel $k$ est de caract\'eristique positive,
l'invariant $H^0(Z,\Omega^{i})$. 

Il y a ici deux points qui demandent beaucoup de travail :

$\bullet$ Montrer que la r\'esolution $Z \to Y$ est  un morphisme $CH_{0}$-trivial
(c'est une propri\'et\'e ind\'ependante de la r\'esolution). En pratique, il faut faire
la r\'esolution explicite, et voir si les fibres sont $CH_{0}$-triviales.

$\bullet$ Montrer qu'un invariant (groupe de Brauer, cohomologie non ramifi\'ee ...)
n'est pas trivial sur $Z$.

\medskip

La premi\`ere m\'ethode, avec $H^{2}_{nr}$, 
alias le groupe de Brauer, est celle qui a \'et\'e utilis\'ee par C. Voisin (doubles solides quartiques)
puis dans \cite{CTP16}
(quartiques lisses dans $\P^4$),
puis par Beauville (doubles solides sextiques),
 et dans de nombreux articles subs\'equents 
de  Hassett, Pirutka, Tschinkel, Kresch,  B\"{o}hning, von Bothmer, Auel.
C'est celle qui a permis le r\'esultat spectaculaire de Hassett, Pirutka, Tschinkel \cite{HPT}
selon lequel la rationalit\'e stable n'est pas forc\'ement constante dans une famille
lisse de dimension relative au moins 4.

 La seconde m\'ethode, avec les diff\'erentielles en caract\'eristique positive,
  a \'et\'e initi\'ee par B. Totaro  \cite{T}.  Elle est inspir\'ee
d'un travail de Koll\'ar de 1995, qui utilisait d\'ej\`a un argument de sp\'ecialisation
sur une vari\'et\'e singuli\`ere en caract\'eristique positive et $H^0(Z,\Omega^{i})$. Totaro en a d\'eduit
des r\'esultats tr\`es g\'en\'eraux sur la non rationalit\'e stable des hypersurfaces tr\`es g\'en\'erales dans $\P^n$,
de degr\'e $d \leq n$ satisfaisant approximativement $d \geq 2n/3$. 
Elle a \'et\'e poursuivie dans  \cite{CTPcycl}.
Des r\'esultats tr\`es g\'en\'eraux  ont \'et\'e ensuite obtenus par cette m\'ethode par T. Okada, 
H.  Ahmadinezhad,  I. Krylov pour d'autres types de vari\'et\'es rationnellement connexes.

La premi\`ere m\'ethode, cette fois-ci avec les invariants cohomologiques
sup\'erieurs $H^{i}_{nr}$, vient d'\^etre utilis\'ee par S. Schreieder \cite{Schr1} pour des
fibrations en quadriques  de grande dimension au-dessus de l'espace projectif.
  \`A cette occasion, il a introduit une variante importante de la m\'ethode de sp\'ecialisation,
qui \'evite dans certains cas de v\'erifier si la r\'esolution de la fibre sp\'eciale
est  $CH_{0}$-triviale (ou presque ${\rm R}$-triviale).

On trouvera ceci discut\'e dans le texte \cite{ctTalca}.

Par des arguments de sp\'ecialisations successives \`a partir du cas des familles de quadriques de Pfister
au-dessus d'un espace projectif, 
Schreieder \cite{Schr3} a fait progresser
de fa\c con spectaculaire le cas des hypersurfaces tr\`es g\'en\'erales de degr\'e $d$
dans $\P^n_{\C}$,
obtenant leur non rationalit\'e stable avec une condition du type $d \geq log(n)$.

\section{Hypersurfaces cubiques non stablement rationnelles sur un corps non alg\'ebriquement clos}

Soit $k$ un corps et $X \subset \P^n_{k}$, $n \geq 3$, une hypersurface cubique lisse.
On s'int\'eresse ici au cas o\`u $k$ n'est pas alg\'ebriquement clos.

Le d\'efi ici est, pour un corps $k$ de complexit\'e arithm\'etique donn\'e
(corps fini, corps local, corps de nombres, corps de fonctions de $d$ variables 
sur un de ces corps ou sur les complexes, corps de s\'eries formelles it\'er\'ees
sur l'un de ces corps) de trouver des 
 hypersurfaces cubiques lisses non r\'etractilement rationnelles $X \subset \P^n_{k}$ avec $X(k) \neq \emptyset$ 
 et $n$ aussi grand que possible.

\subsection{Hypersurfaces cubiques r\'eelles} 

\begin{prop}
Pour tout entier $n \geq  2$, il existe une hypersurface cubique lisse
$X \subset \P^n_{\R}$ telle que le lieu des points r\'eels $X(\R)$ ait deux
composantes connexes.  En particulier,
une telle hypersurface n'est pas r\'etractilement rationnelle.
\end{prop}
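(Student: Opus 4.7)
The plan is to reduce the non retract rationality to disconnectedness via part (b) of the preceding theorem on real connected components: if $X$ is r\'etractilement rationnelle, then $X(\R)$ is connected. So it suffices to exhibit, for each $n\geq 2$, a smooth cubic hypersurface $X_n\subset \P^n_\R$ whose real locus has at least two connected components.

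For $n=2$ one takes the classical Weierstrass cubic $y^2z=x(x-z)(x+z)$, whose real points consist of a compact oval over $x/z\in[-1,0]$ and an unbounded branch over $x/z\geq 1$. The plan for $n\geq 3$ is to propagate this topological picture by adding a positive definite quadratic in the extra variables. Concretely, with homogeneous coordinates $(x,y,z,x_2,\ldots,x_n)$ on $\P^n_\R$ and a real parameter $\varepsilon>0$, consider
\[
F_n \;=\; y^2 z - x^3 + xz^2 + \varepsilon\, z\,(x_2^2 + \cdots + x_n^2).
\]
First I would verify smoothness by a direct Jacobian computation: the partial derivatives are $2yz$, $-3x^2+z^2$, $y^2+2xz+\varepsilon\sum x_i^2$ and $2\varepsilon z x_i$. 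A short case analysis splitting on $z=0$ vs.\ $z\neq 0$ shows that for $\varepsilon>0$ these cannot simultaneously vanish at a projective point, so $X_n$ is smooth.

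Next I would analyze the real topology. In the affine chart $z=1$ the equation reads $y^2+\varepsilon(x_2^2+\cdots+x_n^2) = x^3-x$; since the left hand side is nonnegative, real solutions require $x\in[-1,0]\cup[1,+\infty)$. Over $x\in[-1,0]$ the fiber is a real sphere in the variables $(y,x_2,\dots,x_n)$ of squared radius $x^3-x$, collapsing at the endpoints; the total is a compact piece homeomorphic to $S^{n-1}$, bounded in the affine chart. Over $x\geq 1$ one gets an unbounded piece. To close this piece up, intersect $X_n$ with the hyperplane at infinity $\{z=0\}$: setting $z=0$ in $F_n$ gives $-x^3=0$, so $X_n\cap\{z=0\}$ is the linear subspace $L=\{x=z=0\}\cong \P^{n-2}_\R$, which lies entirely on $X_n$. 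Tracking the scaling, the unbounded branch accumulates on $L(\R)=\R\P^{n-2}$, giving a second connected component.

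The main obstacle is the separation of the two real components inside $\P^n_\R$: I must ensure that the compact oval does not accidentally meet the unbounded branch through the points at infinity. This is handled by noting that the oval stays within a bounded region of the chart $z=1$ (since $y,x_i$ are bounded by $\sqrt{x^3-x}$ for $x\in[-1,0]$), and hence its closure in $\P^n_\R$ is disjoint from $\{z=0\}$, while the unbounded branch is precisely the piece whose closure meets $\{z=0\}\cap X_n=L$. Once this separation is verified, $X_n(\R)$ has at least two connected components, and part (b) of the theorem on r\'eelles composantes connexes yields that $X_n$ is not r\'etractilement rationnelle.
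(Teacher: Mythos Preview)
Your singular model is essentially the paper's (up to relabeling, your $F_n$ is the paper's $\Phi$), and your topological analysis of the real locus is correct. The gap is smoothness: for $n \geq 3$ the hypersurface $\{F_n = 0\}$ is \emph{not} smooth. Your Jacobian case analysis tacitly works over $\R$, but smoothness of a projective $\R$-variety must hold after base change to $\C$. Setting $z = 0$ forces $x = 0$ via $\partial_x F_n = -3x^2$, and then $\partial_z F_n = y^2 + \varepsilon(x_2^2 + \cdots + x_n^2) = 0$ cuts out a nonempty quadric in $\P^{n-2}_\C$ as soon as $n \geq 3$; every point of this quadric lies on $X_n$ and is singular there (all the other partials vanish automatically when $z = 0$). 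Your parameter $\varepsilon > 0$ is a red herring: it merely rescales the $x_i$ and cannot remove these complex singularities.

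The paper repairs exactly this by a genuine deformation. One adds a small multiple of a cubic whose zero locus is smooth, say $\Psi = x^3 + y^3 + z^3 + \sum_i x_i^3$, and considers $F_n + \epsilon\,\Psi = 0$. In the pencil spanned by $F_n$ and $\Psi$ only finitely many members are singular (since $\{\Psi = 0\}$ is smooth), so one may take $\epsilon \neq 0$ as small as desired with smooth fibre. Because the singular locus of $\{F_n = 0\}$ has no real points, $\{F_n = 0\}(\R)$ is already a compact $C^\infty$ manifold, and Ehresmann's theorem applied to the real locus of the family shows that the nearby smooth fibre has real locus diffeomorphic to it, hence still with two connected components. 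Insert this deformation step and your argument goes through.
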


\begin{proof}
Soit $n\geq 2$, soit  $x_{0}, \dots,x_{n-2}, u,v$ des variables,
soit 
$$ \Phi(x_{0}, \dots,x_{n-2},u,v) = (\sum_{i} x_{i}^2) v - u(u-v)(u+v).$$
Soit 
 $Y \subset \P^n_{\R}$ 
 l'hypersurface cubique d\'efinie par
l'\'equation
$$ \Phi(x_{0}, \dots,x_{n-2},u,v) =0.$$
Son lieu singulier est donn\'e par $u=v=\sum_{i} x_{i}^2=0$, il n'a pas de point r\'eel. 
On a donc $Y_{lisse}(\R)=Y(\R)$. Les coordonn\'ees $(u,v)$ d\'efinissent une 
application continue $Y_{lisse}(\R) \to \P^1(\R)$, dont l'image est la r\'eunion des
deux invervalles d\'efinis par $u(u-v)(u+v)\geq  0$.
On v\'erifie ainsi que $Y(\R)$ est une vari\'et\'e $C^{\infty}$ avec deux composantes
connexes.
Soit $\Psi(x_{0}, \dots,x_{n-2},u,v) = \sum_{i} x_{i}^3 +u^3+v^3$.
Pour $\epsilon \in \R$ petit, l'hypersurface cubique d\'efinie par $\Phi+ \epsilon \Psi=0$
est lisse pour $\epsilon \neq 0$, pout tout $\epsilon \in \R$ petit, son lieu r\'eel est 
une vari\'et\'e $C^{\infty}$ lisse, et par le th\'eor\`eme d'Ehresmann, ce lieu est
diff\'eomorphe \`a $Y(\R)=Y_{lisse}(\R)$.
\end{proof}

Exercice:  pour  une hypersurface cubique $X \subset \P^n_{\R}$, l'espace $X(\R)$
a au plus deux composantes connexes.

 \subsection{Sp\'ecialisations \`a fibres r\'eductibles}
 Dans le contexte de la sp\'ecialisation du groupe de Chow, Totaro \cite{T}
 a utilis\'e des sp\'ecialisations \`a fibre r\'eductible.
 On peut le faire aussi dans le cadre de la ${\rm R}$-\'equivalence.
 L'\'enonc\'e suivant est inspir\'e par \cite{T} et \cite{ChL}, mais est plus simple.

\begin{prop}\label{fibrereductible}
Soit $R$ un anneau de valuation discr\`ete, $K$ son corps des fractions, $k$
son corps r\'esiduel.  Soit $\sX$ un $R$-sch\'ema propre et plat. Supposons la fibre g\'en\'erique $X/K$ 
lisse et g\'eom\'etriquement int\`egre.
Soit $Y$ la fibre sp\'eciale.  Supposons  $Y$ union de deux ferm\'es
 $Y= V \cup W$, $T=V \cap W$, 
$T(k)=\emptyset$, $V_{lisse}(k) \neq \emptyset$ et $W_{lisse}(k) \neq \emptyset$.
Alors la $K$-vari\'et\'e $X$ n'est pas   ${\rm R}$-triviale et n'est donc pas
r\'etractilement rationnelle.
\end{prop}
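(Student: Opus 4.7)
L'id\'ee est d'appliquer la sp\'ecialisation de la ${\rm R}$-\'equivalence (Th\'eor\`eme~\ref{specialisationR}) pour obtenir une contradiction \`a partir de la ${\rm R}$-trivialit\'e suppos\'ee de $X$. Comme la ${\rm R}$-trivialit\'e d'une $K$-vari\'et\'e passe \`a toute extension de $K$, on peut remplacer $R$ par son compl\'et\'e et supposer $R$ hens\'elien, de m\^eme corps r\'esiduel $k$.

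On choisit alors $P \in V_{lisse}(k)$ et $Q \in W_{lisse}(k)$, fournis par les hypoth\`eses. Comme $T(k)=\emptyset$, on a $P \notin W$ et $Q \notin V$, donc au voisinage de $P$ (resp. $Q$) la fibre sp\'eciale $Y$ est localement \'egale \`a $V$ (resp. \`a $W$) et elle est lisse en ce point. Combin\'e \`a la platitude de $\sX/R$, ceci entra\^{\i}ne que $\sX \to \Spec R$ est lisse en $P$ et en $Q$. Le lemme de Hensel fournit donc des sections $\tilde P, \tilde Q \in \sX(R) = X(K)$ relevant respectivement $P$ et $Q$.

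On raisonne par l'absurde. Si $X$ est ${\rm R}$-triviale, alors $\tilde P$ et $\tilde Q$ sont ${\rm R}$-\'equivalents dans $X(K)$, et le Th\'eor\`eme~\ref{specialisationR} entra\^{\i}ne que leurs r\'eductions $P$ et $Q$ sont ${\rm R}$-\'equivalentes dans $Y(k)$. Il existe donc une cha\^{\i}ne $P = A_0, A_1, \dots, A_n = Q$ de $k$-points de $Y$ dont les points cons\'ecutifs sont \'el\'ementairement ${\rm R}$-li\'es. Puisque $Y$ est propre, chaque lien est r\'ealis\'e par un $k$-morphisme $h : \P^1_k \to Y$ dont l'image contient les deux points concern\'es. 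Cette image est irr\'eductible et contenue dans l'union $V \cup W$ de deux ferm\'es, donc enti\`erement dans $V$ ou enti\`erement dans $W$. Ainsi chaque ${\rm R}$-lien \'el\'ementaire respecte la partition $Y(k) = V(k) \sqcup W(k)$, disjointe car $T(k)=\emptyset$, et il en est de m\^eme de la ${\rm R}$-\'equivalence. Ceci contredit $P \in V(k) \setminus W(k)$ et $Q \in W(k) \setminus V(k)$. On conclut que $X$ n'est pas ${\rm R}$-triviale, et a fortiori pas r\'etractilement rationnelle.

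Le seul point qui demande un peu de soin est l'existence des rel\`evements $\tilde P, \tilde Q$; apr\`es compl\'etion de $R$, ceci se ram\`ene \`a la lissit\'e de $\sX/R$ en $P$ et $Q$, automatique puisque ces points sont dans les lieux lisses de $V$ et $W$ disjoints de $T$. L'argument de s\'eparation par composantes est ensuite une cons\'equence imm\'ediate de l'irr\'eductibilit\'e de $\P^1$ combin\'ee \`a l'hypoth\`ese $T(k)=\emptyset$.
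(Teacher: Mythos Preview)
Your proof is correct and follows the same approach as the paper: reduce to $R$ hens\'elien, lift smooth $k$-points of $V$ and $W$ to $X(K)$ via Hensel, then use specialisation of ${\rm R}$-\'equivalence (Th\'eor\`eme~\ref{specialisationR}) together with the irreducibility of $\P^1_k$ and the hypothesis $T(k)=\emptyset$ to separate $V(k)$ from $W(k)$. The only cosmetic difference is that the paper pulls $V$ and $W$ back to closed subsets of $\P^1_k$, whereas you push the image of $\P^1_k$ forward and invoke its irreducibility; you also spell out the chain argument and the smoothness of $\sX/R$ at the chosen points, which the paper leaves implicit.
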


\begin{proof}
On peut supposer  que $R$ est hens\'elien. Par le lemme de Hensel, on trouve
des $R$-points $A$ et $B$ de $\sX(R)=X(K)$ qui se sp\'ecialisent l'un dans $V(k)$,
l'autre dans $W(k)$. Par le th\'eor\`eme \ref{specialisationR},  l'application de sp\'ecialisation
$X(K)=\sX(R) \to Y(k)$ passe au quotient par la ${\rm R}$-\'equivalence.
Il existe donc
 un $k$-morphisme $f : \P^1_{k} \to Y$ tel que $f(0) \in V(k)$ et
$f(\infty) \in W(k)$. La courbe $\P^1_{k}$ est alors couverte par les deux ferm\'es
non vides $v=f^{1}(V)$ et $w=f^{-1}(W)$, qui contiennent chacun un $k$-point,
et dont l'intersection n'a pas de $k$-point.
L'un des deux ferm\'es, soit $v$ est \'egal \`a $\P^1_{k}$. Mais alors
$w \subset v$, et tout $k$-point de $w$ est dans $v$. Contradiction.
\end{proof}

  \medskip
  
\begin{ex}  Soit $n \geq 2$ et $f_{0}(x_{1}, \dots, x_{n}) \in k[x_{1}, \dots, x_{n}]$ 
une forme homog\`ene   de degr\'e $d\geq 2$ sans z\'ero sur le corps $k$ d\'efinissant une hypersurface lisse sur $k$.
 Soit $\alpha \in k^*$ une valeur (non nulle) de $f$ sur $k^n$.
  Soit $f(x_{0}, \dots, x_{n}): = \alpha x_{0}^d - f_{0}(x_{1}, \dots, x_{n}) \in k[x_{0},x_{1}, \dots, x_{n}]$,
  puis  $g(x_{0}, \dots, x_{n})= x_{0} . f(x_{0} , \dots, x_{n})$.
  Soit $Y \subset \P^n_{k}$ l'hypersurface de degr\'e $d+1$ d\'efinie par $g=0$. C'est l'union de $V$
  d\'efini par $x_{0}=0$ et $W$ d\'efini par $f_{0}(x_{1}, \dots, x_{n})=0$. L'intersection $T=V \cap W$
  satisfait $T(k)=\emptyset$. On a $V(k) \neq \emptyset$ et $W(k) \neq \emptyset$.
  
  Soit $g(x_{0}, \dots, x_{n}) \in k[x_{0},x_{1}, \dots, x_{n}]$ une forme homog\`ene de degr\'e $d+1$
  d\'efinissant une hypersurface lisse dans $\P^n_{k}$.
  On consid\`ere alors $R=k[[t]]$, $K=k((t))$. L'hypersurface $X \subset \P^n_{K}$
  d\'efinie par $tg(x_{0}, \dots, x_{n})+f(x_{0}, \dots, x_{n})=0$ n'est pas ${\rm R}$-triviale,
  et n'est pas pas r\'etractilement rationnelle. 
  
  On peut aussi donner des exemples similaires avec $R$ un anneau de valuation discr\`ete complet
  d'in\'egale caract\'eristique.
  
  En utilisant cette m\'ethode dans le cas $d=2$, on obtient des hypersurfaces cubiques lisses, avec un $K$-point,  non r\'etractilement rationnelles
 sur $\P^{N}_{K}$ pour tout $N \leq   2^{r-1}$  sur    $K=\C((u_{1}))\dots ((u_{r}))$ et dans  $\Q_{p}((u_{1})) \dots ((u_{r-2}))$.
 
 Sur   $K=\C((u_{1}))((u_{2}))((u_{3}))$ on trouve donc des hypersurfaces cubiques dans $\P^4_{K}$.
Ces bornes sont les m\^emes que celles obtenues dans 
  \cite{ChL} et dans \cite{ctmumbai}, qui \'etablissent le r\'esultat plus fort que
  les hypersurfaces cubiques concern\'ees ne sont pas $CH_{0}$-triviales.
 La d\'emonstration de ce dernier r\'esultat   utilise une variation due \`a Totaro de la technique de 
  sp\'ecialisation de Voisin et CT-Pirutka pour les groupes de Chow de z\'ero-cycles.

\end{ex}

 \bigskip
 
\subsection{Hypersurfaces cubiques diagonales et cohomologie non ramifi\'ee}\label{diagonal}

Ce paragraphe est extrait directement de l'article \cite{ctmumbai}.  On utilise encore ici
une technique de sp\'ecialisation, mais elle est diff\'erente de celles employ\'ees ci-dessus.

\begin{theo}\label{theodiagonal}
Soit $k$ un corps de caract\'eristique diff\'erente de 3, poss\'edant un \'el\'ement $a$
qui n'est pas un cube. Soient $0 \leq n \leq m$ des entiers.
Soit $F$ un corps   avec
$$k(\lambda_{1}, \dots, \lambda_{m}) \subset F \subset F_{m}:=k((\lambda_{1})) \dots ((\lambda_{m})).$$
L'hypersurface cubique      $X:= X_{n,F}$ de $\P^{n+3}_{F}$ d\'efinie par l'\'equation
$$ x^3+y^3+z^3+aw^3+ \sum_{i=1}^n \lambda_{i} t_{i}^3=0$$
 poss\`ede un point rationnel et 
n'est pas
 universellement $CH_{0}$-triviale,  en particulier elle  n'est pas r\'etractilement rationnelle.
\end{theo}

\begin{proof} 
Pour \'etablir le r\'esultat, on peut supposer que $k$ contient une racine cubique primitive de l'unit\'e, soit $j$,
et que $F=F_{m}$.
Le lemme  \ref{k((t))}  ci-dessous permet de supposer $n=m$.
On fixe un isomorphisme $\Z/3 = \mu_{3}$ et on consid\`ere la cohomologie \'etale 
\`a coefficients $\Z/3$.  On ignore les torsions \`a la Tate dans les notations.
Etant donn\'es un corps $L$ contenant $k$ et des \'el\'ements $b_{i}, i=1, \dots, s,$ de $L^*$,
on note 
$(b_{1}, \dots, b_{s}) \in H^s(L,\Z/3)$ le cup-produit, en cohomologie galoisienne,
des classes $(b_{i}) \in L^*/L^{*3}=H^1(L,\Z/3)$.

 On va d\'emontrer par
r\'ecurrence sur $n\neq 0$  l'assertion suivante, qui implique la proposition.

($A_{n}$) 
 Soient $k$, $a$, $F_{n}$ et $X_{n}/F_{n}$ comme ci-dessus.   
Le cup-produit
$$\alpha_{n}: = ((x+jy)/(x+y), a, \lambda_{1},\dots,\lambda_{n}) \in H^{n+2}(F_{n}(X_{n}),\Z/3)$$
d\'efinit une classe de cohomologie non ramifi\'ee  (par rapport au corps de base $F_{n}$) qui ne provient pas
d'une classe dans $H^{n+2}(F_{n},\Z/3)$.

Le cas $n=0$ est connu (\cite[Chap. VI, \S 5]{manin} \cite[\S 2.5.1]{CTSa87a}).
Supposons l'assertion d\'emontr\'ee pour $n$.

La classe $\alpha_{n+1}$ sur la $F_{n+1}$-hypersurface $X_{n+1} \subset \P^{n+4}_{F_{n+1}}$ a ses r\'esidus 
triviaux en dehors des diviseurs d\'efinis par $x+y=0$ et $x+jy=0$.
Soit $\Delta \subset X_{n+1}$ le diviseur $x+y=0$. Ce diviseur est d\'efini par les
\'equations 
$$x+y=0, z^3+aw^3+ \sum_{i=1}^{n+1} \lambda_{i} t_{i}^3=0.$$
Le r\'esidu  de $\alpha_{n+1}$ au point g\'en\'erique de $\Delta$ est
 $$\partial_{\Delta}(\alpha_{n+1}) = 
  \pm (a, \lambda_{1},\dots,\lambda_{n+1}) \in H^{n+2}(F_{n+1}(\Delta),\Z/3).$$
 Mais dans le corps des fonctions de $\Delta$, on a 
$$1+a(w/z)^3+ \sum_{i=1}^{n+1} \lambda_{i} (t_{i}/z)^3=0$$
et 
cette \'egalit\'e implique (cf. \cite[Lemma 1.3]{milnor})  :
$$(a,\lambda_{1}, \dots, \lambda_{n+1}) = 0 \in  H^{n+2}(F_{n+1}(\Delta),\Z/3).$$
Le m\^{e}me argument s'applique pour le diviseur
d\'efini par  $x+jy=0$. Ainsi $\alpha_{n+1}$ est une classe de cohomologie non
ramifi\'ee sur la $F_{n+1}$-hypersurface $X_{n+1}$.

Soit   $\mathcal{X}_{n+1}$ le $F_{n}[[\lambda_{n+1}]]$-sch\'ema d\'efini par
$$ x^3+y^3+z^3+aw^3+ \sum_{i=1}^{n+1} \lambda_{i} t_{i}^3=0.$$
Le diviseur $Z$ d\'efini par $\lambda_{n+1}=0$ sur $\mathcal{X}$
est le c\^{o}ne d'\'equation
$$ x^3+y^3+z^3+aw^3+ \sum_{i=1}^n \lambda_{i} t_{i}^3=0$$
dans $\P^{n+4}_{F_{n}}$, c\^one qui est birationnel au produit de $\P^1_{F_{n}}$
et de l'hypersurface cubique lisse  $X_{n} \subset \P_{F_{n}}^{n+3}$ d\'efinie par
$$ x^3+y^3+z^3+aw^3+ \sum_{i=1}^n \lambda_{i} t_{i}^3=0.$$
 Le corps des fonctions rationnelles de $\mathcal{X}_{n+1}$ est
$F_{n+1}(X_{n+1})$.

On a  $$\partial_{Z}(\alpha_{n+1} ) =   \pm  ((x+jy)/(x+y), a, \lambda_{1},\dots,\lambda_{n})  \in H^{n+2}(F_{n}(Z), \Z/3).$$

Par l'hypoth\`ese de r\'ecurrence 
$$ ((x+jy)/(x+y), a, \lambda_{1},\dots,\lambda_{n}) \in H^{n+2}(F_{n}(X_{n}), \Z/3)$$
n'est pas dans l'image de $ H^{n+2}(F_{n},\Z/3)$. Ceci implique que
$$ ((x+jy)/(x+y), a, \lambda_{1},\dots,\lambda_{n}) \in H^{n+2}(F_{n}(Z)), \Z/3)$$
n'est pas dans l'image de $H^{n+2}(F_{n},\Z/3)$.
Du  diagramme commutatif 
$$\begin{array}{ccccccccc}
 \partial_{Z} : & H^{n+3}(F_{n+1}(X),\Z/3)  & \to & H^{n+2}(F_{n}(Z), \Z/3)\\
 & \uparrow & &  \uparrow \\
 \partial_{\lambda_{n+1}=0} : & H^{n+3}(F_{n+1},\Z/3) & \to  &H^{n+2}(F_{n},\Z/3)
 \end{array}$$
on conclut que 
$$\alpha_{n+1}: = ((x+jy)/(x+y), a, \lambda_{1},\dots,\lambda_{n+1}) \in H^{n+3}(F_{n+1}(X),\Z/3)$$
n'est pas dans l'image de $H^{n+3}(F_{n+1},\Z/3)$.

Ceci \'etablit $(A_{n})$ pour tout entier $n$ et
 implique (cf. \cite{merk}) que la  $F_{n}$-vari\'et\'e $X_{n}$ n'est pas universellement $CH_{0}$-triviale  et   n'est pas r\'etractilement rationnelle.
\end{proof}

\begin{lem}\label{k((t))}
Soit $F$ un corps. Si  une $F$-vari\'et\'e $X$
projective, lisse, g\'eom\'etriquement connexe n'est pas universellement $CH_{0}$-triviale, alors la $F((t))$-vari\'et\'e 
$X\times_{F}F((t))$ n'est pas universellement $CH_{0}$-triviale, et donc n'est pas r\'etractilement rationnelle.
\end{lem}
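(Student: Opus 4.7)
Le plan est de raisonner par contrapos\'ee. Supposons qu'il existe une extension $E/F$ telle que l'application degr\'e $CH_{0}(X_{E}) \to \Z$ ne soit pas un isomorphisme. Comme $E((t))$ contient $F((t))$ et que $(X\times_{F}F((t)))\times_{F((t))}E((t))$ s'identifie \`a $X_{E}\times_{E}E((t))$, il suffit d'\'etablir l'\'enonc\'e suivant (quitte \`a remplacer $F$ par $E$ et $X$ par $X_{E}$) : pour tout corps $F$ et toute $F$-vari\'et\'e projective lisse g\'eom\'etriquement int\`egre $X$, si $X\times_{F}F((t))$ est $CH_{0}$-triviale, alors $X$ l'est aussi.

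Pour cet \'enonc\'e r\'eduit, on poserait $R=F[[t]]$, $K=F((t))$ et $\sX=X\times_{F}R$, projectif et lisse sur $R$. On dispose d'une part de l'homomorphisme de sp\'ecialisation de Fulton $\mathrm{sp}:CH_{0}(X_{K})\to CH_{0}(X)$, qui pr\'eserve le degr\'e, et d'autre part de l'image r\'eciproque plate $\iota^{*}:CH_{0}(X)\to CH_{0}(X_{K})$ associ\'ee \`a la projection $X_{K}\to X$, qui pr\'eserve \'egalement le degr\'e. L'\'etape cruciale serait la v\'erification de l'identit\'e $\mathrm{sp}\circ\iota^{*}=\mathrm{id}_{CH_{0}(X)}$. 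Pour un point ferm\'e $P\in X$ de corps r\'esiduel $\kappa(P)$ fini sur $F$, l'adh\'erence dans $\sX$ du $0$-cycle $\iota^{*}(P)=[P\times_{F}K]$ est le sous-sch\'ema ferm\'e $P\times_{F}R=\mathrm{Spec}(\kappa(P)\otimes_{F}R)$, qui s'identifie \`a $\mathrm{Spec}(\kappa(P)[[t]])$ parce que $\kappa(P)/F$ est finie. La fibre en $t=0$ de ce sous-sch\'ema vaut $\mathrm{Spec}(\kappa(P))=P$ avec multiplicit\'e $1$, et la description explicite de la sp\'ecialisation de Fulton rappel\'ee plus haut donne alors $\mathrm{sp}(\iota^{*}(P))=[P]$.

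L'identit\'e $\mathrm{sp}\circ\iota^{*}=\mathrm{id}$ entra\^{\i}ne l'injectivit\'e de $\iota^{*}$. Combin\'ee \`a l'hypoth\`ese $CH_{0}(X_{K})=\Z$ et \`a la compatibilit\'e $\deg_{K}\circ\iota^{*}=\deg_{F}$, elle force l'injectivit\'e de $\deg_{F}:CH_{0}(X)\to\Z$. La surjectivit\'e s'obtient en sp\'ecialisant un $0$-cycle de degr\'e $1$ sur $X_{K}$ (fourni par l'hypoth\`ese) en un $0$-cycle de degr\'e $1$ sur $X$. Ainsi $\deg_{F}$ est un isomorphisme, d'o\`u l'\'enonc\'e r\'eduit et, par contrapos\'ee, la premi\`ere assertion du lemme.

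Le principal obstacle est la v\'erification soigneuse de l'identit\'e $\mathrm{sp}\circ\iota^{*}=\mathrm{id}$, qui repose de fa\c con essentielle sur l'identification $\kappa(P)\otimes_{F}F[[t]]\cong\kappa(P)[[t]]$ valable d\`es que $\kappa(P)/F$ est finie. La non-rationalit\'e r\'etractile de $X\times_{F}F((t))$ annonc\'ee dans la conclusion du lemme d\'ecoule enfin imm\'ediatement de la proposition d\'ej\`a \'etablie selon laquelle toute vari\'et\'e projective lisse g\'eom\'etriquement connexe r\'etractilement rationnelle est $CH_{0}$-triviale.
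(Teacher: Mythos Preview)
Your argument is correct and follows essentially the same route as the paper. The paper's proof is a one-line invocation of Fulton's specialisation map $CH_{0}(X_{L((t))})\to CH_{0}(X_{L})$ for every $L\supset F$, together with the assertion that it is surjective and degree-preserving; you make the same argument explicit by first reducing to a single field and then proving the surjectivity via the identity $\mathrm{sp}\circ\iota^{*}=\mathrm{id}$ for the constant family $X\times_{F}F[[t]]$, which is exactly the reason the paper's surjectivity claim holds.
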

\begin{proof}
Sur tout corps $L$ contenant $F$, on dispose de l'application de sp\'ecialisation
$CH_{0}(X_{L((t))}) \to CH_{0}(X_{L})$, et cette application est surjective et respecte
le degr\'e.
\end{proof}

\begin{rema}
Il serait int\'eressant de comprendre la g\'en\'eralit\'e de la construction faite dans le
th\'eor\`eme \ref{theodiagonal}. On utilise une classe de cohomologie non ramifi\'ee non constante sur un mod\`ele
birationnel de la fibre sp\'eciale d'une $k[[t]]$-sch\'ema propre \`a fibres int\`egres, et on en tire une classe de cohomologie non ramifi\'ee non constante
de degr\'e un de plus sur la fibre g\'en\'erique sur $k((t))$, essentiellement par cup-produit avec la classe d'une uniformisante
de l'anneau $k[[t]]$.
\end{rema}

On laisse au lecteur le soin d'\'etablir l'analogue suivant du th\'eor\`eme \ref{theodiagonal}.

\begin{theo}\label{theodiagonalsurpadique}
Soient $p \neq 3$ un nombre premier et $k$ un corps $p$-adique dont le corps r\'esiduel
contient les racines cubiques primitives de 1. Soit $a \in k^*$ une unit\'e qui n'est pas un cube.
Soit $\pi$ une uniformisante de $k$. Soient $0 \leq n \leq m$ des entiers.
Soit $F$ un corps   avec
$$\Q(a) (\lambda_{1}, \dots, \lambda_{m}) \subset F \subset k((\lambda_{1})) \dots ((\lambda_{m})).$$
L'hypersurface cubique      $X_{n}$ de $\P^{n+4}_{F}$ d\'efinie par l'\'equation
$$ x^3+y^3+z^3+aw^3+ \pi t^3+  \sum_{i=1}^n \lambda_{i} t_{i}^3=0,$$
qui poss\`ede un point rationnel,
n'est pas
 universellement $CH_{0}$-triviale et  donc n'est pas r\'etractilement rationnelle.
\end{theo}

\bigskip

{\it Exemples}

 En appliquant le th\'eor\`eme \ref{theodiagonal},
on trouve  $X_{n} \subset \P^{n+3}_{F}$ non r\'etractilement rationnelle
avec 
$$k(\lambda_{1}, \dots, \lambda_{n}) \subset F \subset k((\lambda_{1})) \dots ((\lambda_{n}))$$
dans les situations suivantes.

(i) Le corps $k=\F$ 
est un corps fini 
de caract\'eristique diff\'erente de 3
contenant les racines cubiques de 1.

(ii) Le  corps $k$, de caract\'eristique diff\'erente de 3, poss\`ede une valuation discr\`ete,
par exemple $k$ est le  corps des fonctions d'une vari\'et\'e complexe de dimension au moins 1, ou est un corps $p$-adique, ou est un corps de nombres.  

  On trouve ainsi des hypersurfaces cubiques lisses non r\'etractilement rationnelles
dans $\P_{\C(x_{1},\dots,x_{m})}^{n}$, avec un point rationnel, pour tout  entier $n$ avec $3 \leq n \leq m+2$.

En appliquant le th\'eor\`eme \ref{theodiagonalsurpadique}, 
sur un corps $k$ $p$-adique ($p\neq 3$) contenant une racine cubique de $1$,    on trouve
des hypersurfaces cubiques lisses non r\'etractilement rationnelles
dans 
$\P_{k(x_{1},\dots,x_{m})  }^{n}$, avec un point rationnel, pour tout  entier $n$  avec
$4 \leq n \leq m+4$.

\end{document}